\title{Consistent random vertex-orderings of graphs}
\date{10 June, 2015}
\author{Paul Balister}
\thanks{The first author is partially supported by NSF grant DMS~1301614.}
\address{Department of Math Sciences, University of Memphis, TN 38152, USA}
\email{pbalistr@memphis.edu,
\url{http://www.memphis.edu/msci/people/pbalistr.php}}
\author{B\'ela Bollob\'as}
\thanks{The second author is partially supported by NSF grant DMS~1301614
and MULTIPLEX no.\ 317532.}
\address{Department of Pure Mathematics and Mathematical Statistics,
Wilberforce Road, Cambridge CB3\thinspace0WB, UK; {\em and\/}
Department of Mathematical Sciences, University of Memphis, Memphis TN 38152,
USA; {\em and\/} London Institute for Mathematical Sciences, 35a South St.,
Mayfair, London W1K\thinspace2XF, UK.}
\email{bollobas@dpmms.cam.ac.uk}
\author{Svante Janson}
\thanks{The third author is partly supported by the Knut and Alice
Wallenberg Foundation.}
\address{Department of Mathematics, Uppsala University, PO Box 480,
SE-751~06 Uppsala, Sweden}
\email{svante.janson@math.uu.se, \url{http://www2.math.uu.se/~svante/}}
\subjclass[2010]{60C05, 05C60, 60G09}
\numberwithin{equation}{section}
\newtheorem{theorem}{Theorem}[section]
\newtheorem{lemma}[theorem]{Lemma}
\newtheorem{corollary}[theorem]{Corollary}
\theoremstyle{definition}
\newtheorem{example}[theorem]{Example}
\newtheorem{remark}[theorem]{Remark}
\theoremstyle{remark}
\newcommand\refT[1]{Theorem\/~\textup{\ref{t:#1}}}
\newcommand\refC[1]{Corollary\/~\textup{\ref{c:#1}}}
\newcommand\refL[1]{Lemma\/~\textup{\ref{l:#1}}}
\newcommand\refR[1]{Remark\/~\textup{\ref{r:#1}}}
\newcommand\refS[1]{Section\/~\textup{\ref{s:#1}}}
\newcommand\refE[1]{Example\/~\textup{\ref{x:#1}}}
\newcommand\refF[1]{Figure\/~\textup{\ref{f:#1}}}
\newenvironment{romenumerate}[1][-10pt]{
\addtolength{\leftmargini}{#1}\begin{enumerate}
}{\end{enumerate}}
\newcommand\pfitem[1]{\par\noindent(#1):}
\newcommand\pfcase[2]{\smallskip\noindent{\em Case #1: #2}\noindent}
\newcommand\set[1]{\ensuremath{\{#1\}}}
\newcommand\id[1]{\boldsymbol1\set{#1}}
\newcommand\intoi{\int_0^1}
\newcommand\cFN{\cF_N}
\newcommand\cFoo{\cF_\infty}
\newcommand\Goo{G_\infty}
\newcommand\bGoo{\bG_{\infty}}
\newcommand\ioo{_{i=1}^\infty}
\newcommand\ntoo{\ensuremath{{n\to\infty}}}
\newcommand\eqd{\overset{\mathrm{d}}{=}}
\newcommand\E{\operatorname{\mathbb E}}
\newcommand\Prb{\operatorname{\mathbb P}}
\newcommand\Var{\operatorname{Var}}
\newcommand\Be{\operatorname{Be}}
\newcommand\eps{\varepsilon}
\renewcommand\le{\leqslant}
\renewcommand\ge{\geqslant}
\newcommand\bigpar[1]{\bigl(#1\bigr)}
\newcommand\biggpar[1]{\biggl(#1\biggr)}
\newcommand\cD{\mathcal D}
\newcommand\cE{\mathcal E}
\newcommand\cF{\mathcal F}
\newcommand\cH{\mathcal H}
\newcommand\cO{\mathcal O}
\newcommand\cP{\mathcal P}
\newcommand\cV{\mathcal V}
\newcommand\cW{\mathcal W}
\newcommand\R{\mathbb R}
\newcommand\Z{\mathbb Z}
\newcommand\tG{\tilde G}
\newcommand\tU{\tilde U}
\newcommand\bG{\overline G}
\newcommand\bK{\overline K}
\newcommand\bKn{\overline{K}_n}
\newcommand\bP{\overline P}
\newdimen\unit\newdimen\psep\newcount\nd\newcount\ndx\newbox\dotb
\newdimen\dx\newdimen\dy\newdimen\dxx\newdimen\dyy\newdimen\hgt
\newcommand\clap[1]{\hbox to 0pt{\hss{#1}\hss}}
\newcommand\vdisk[1]{{\setbox0\clap{\font\dotf=cmr10 scaled #1\dotf.}%
\raise-.5\ht0\box0}}
\newcommand\vblob[1]{{\setbox0\clap{$#1$}\raise-.55\ht0\box0}}
\newcommand\varline[2]{\setbox\dotb\hbox{\vdisk{#1}}\psep=#2\ht\dotb}
\newcommand\point[3]{\rlap{\kern#1\unit\raise#2\unit\hbox{#3}}}
\newcommand\setnd[4]{\dx=#3\unit\advance\dx-#1\unit\divide\dx by\psep
\dy=#4\unit\advance\dy-#2\unit\divide\dy by\psep
\multiply\dx by\dx\multiply\dy by\dy\advance\dx\dy\nd=1
\loop\ifnum\dx>0\advance\dx-\nd sp\advance\nd1\advance\dx-\nd sp\repeat}
\newcommand\dl[4]{{\setnd{#1}{#2}{#3}{#4}\dline{#1}{#2}{#3}{#4}\nd}}
\newcommand\dline[5]{{\nd=#5\hgt=#2\unit\dx=#3\unit\advance\dx-#1\unit
\divide\dx by\nd\dy=#4\unit\advance\dy-#2\unit\divide\dy by\nd
\rlap{\kern#1\unit\loop\ifnum\nd>1\advance\nd-1\advance\hgt\dy
\kern\dx\raise\hgt\copy\dotb\repeat}}}
\newcommand\ptlr[3]{\point{#1}{#2}{\raise-.4ex\rlap{$\ \,\scriptstyle{#3}$}}}
\newcommand\ptll[3]{\point{#1}{#2}{\raise-.4ex\llap{$\scriptstyle{#3}\ $}}}
\newcommand\pt[2]{\point{#1}{#2}{\Large\vblob{\bullet}}}
\newcommand\pe[2]{\point{#1}{#2}{\Large\vblob{\circ}}}
\begin{document}

\begin{abstract}
Given a hereditary graph property~$\cP$, consider distributions of random
orderings of vertices of graphs $G\in\cP$ that are preserved under
isomorphisms and under taking induced subgraphs. We show that for many
properties $\cP$ the only such random orderings are uniform, and give some
examples of non-uniform orderings when they exist.
\end{abstract}
\maketitle

\section{Introduction}

For any (finite or countably infinite) graph~$G$, write $\cO_G$ for the set of
possible total orderings of the vertex set $V(G)$, and write $\cD_G$ for the
set of all probability distributions on~$\cO_G$. (For countably infinite
graphs, we use the $\sigma$-algebra generated by all events of the form
$u<v$, $u,v\in V(G)$.) Recall that $H$ is an {\em induced\/} subgraph
of $G$ if the vertex set $V(H)$ is a subset of $V(G)$ and an edge $xy$ lies in
$H$ if and only if $x,y\in V(H)$ and $xy$ is an edge of~$G$. Note that an
induced subgraph is determined by the subset $V(H)\subseteq V(G)$. We shall
write $G[S]$ for the induced subgraph of $G$ with vertex set~$S$.

We call a distribution $\Prb_G\in\cD_G$ {\em consistent\/} if for any two
finite isomorphic induced subgraphs $H_1$, $H_2$ and any isomorphism
$\phi\colon H_1\to H_2$, the induced orders on $H_1$ and $H_2$ have
distributions that are mapped to each other by~$\phi$, i.e., for all
$v_1,\dots,v_k\in H_1$,
\[
 \Prb_G(v_1<v_2<\dots<v_k)=\Prb_G(\phi(v_1)<\phi(v_2)<\dots<\phi(v_k)).
\]
(In fact this then implies the same result even for infinite induced
subgraphs.)

\begin{example}\label{x:uniform}
Define the {\em uniform\/} random ordering on $G$ by assigning the vertices
i.i.d.\ uniform $U(0,1)$ random variables $X_v$ and declaring that $v_1<v_2$
if and only if $X_{v_1}<X_{v_2}$. This almost surely gives a total ordering
of~$V(G)$, and the resulting distribution of orderings is clearly consistent.
For a finite graph of order~$n$, the uniform random ordering is just the
natural uniform probability distribution on all $|\cO_G|=n!$ orderings
of~$V(G)$.
\end{example}

There are some cases when the uniform random ordering is the only consistent
random ordering. In this case we shall call the graph $G$ itself
{\em uniform}. As an example, consider a {\em homogeneous\/} graph~$G$, namely
a graph that is either a complete graph or an empty graph. As every induced
subgraph of order $k$ is isomorphic to itself by any permutation, we
must have that the ordering on any $k$ vertices is uniformly chosen from the
$k!$ possible orderings. It thus agrees with the uniform model defined above
on any finite subset of vertices, and hence on the whole graph.
The converse is false in general --- there exist infinite non-homogeneous
graphs which are uniform. Indeed, we shall see many examples below. However,
for finite non-homogeneous graphs there are always non-uniform consistent
random orderings (see for example \refT{bounded} and \refL{almostuniform}
below). Hence for finite graphs, $G$ is uniform if and only if it is
homogeneous.

A {\em graph property\/} $\cP$ is a collection of finite labelled graphs
(typically on vertex sets of the form $[n]=\set{1,\dots,n}$), which is closed
under isomorphism, so if the labelled graph $G$ is isomorphic to $G'$ then
$G\in\cP$ if and only if $G'\in\cP$. A graph property is called
{\em hereditary\/} if whenever $G\in\cP$ and $H$ is an induced subgraph of $G$
then $H\in\cP$. Hereditary properties of graphs have been studied for over two
decades, and there is a huge family of results concerning the structure of
graphs, hypergraphs, and other combinatorial structures having a certain
hereditary property, the number of graphs of order $n$ in a property,
the difficulty of approximation by graphs in the property, etc. For a sample
of results, see \cite{Aleks, Al-Stav1, Al-Stav2, Al-Stav3, BBM1, BBM2,
BBSS1, BBSS2,  BBW1, BBW2, BoTho1, BoTho2, BoTho3, BTW, Pr-Ste1, Pr-Ste2, Pr-Ste3, Saxt}.
There are two obvious ways of defining a hereditary property of graphs. First,
let $\cH$ be a collection of graphs, and write $\cF_\cH$ for the hereditary
property consisting of all finite graphs $G$ that do not contain any induced
subgraph isomorphic to some graph in~$\cH$. We call the graphs in this property
$\cH$-{\em free}. Second, the collection
$\cP_G$ of all finite graphs isomorphic to some induced subgraph
of a (finite or countably infinite) graph $G$ is also a hereditary property.

Given a hereditary property $\cP$, consider probability models that assign to
each graph $G\in\cP$ an element $\Prb_G\in\cD_G$, i.e., a random total
ordering of its vertex set~$V(G)$. We call this model {\em consistent\/} if,
whenever $H,G\in \cP$ and $H$ is isomorphic to an induced subgraph $H'$ of $G$,
by say $\phi\colon H\to H'$, then the random order $\Prb_H$ has the same
distribution as the random order induced on $H'$ by~$\Prb_G$. In other words,
for all $x_1,x_2,\dots,x_k\in V(H)$,
\begin{equation}\label{e:induced}
 \Prb_H(x_1<x_2<\dots<x_k)=\Prb_G(\phi(x_1)<\phi(x_2)<\dots<\phi(x_k)).
\end{equation}
(Note that it follows that each $\Prb_G$ is consistent.)
For any hereditary property $\cP$, the {\em uniform\/} model, defined by
choosing the uniform distribution on all orderings of $V(G)$ for each
$G\in\cP$, is clearly consistent. We call the property $\cP$ {\em uniform\/}
if the only consistent ordering model on $\cP$ is the uniform one. This
terminology is justified by the following observation.

\begin{lemma}\label{l:consistent}
 Let\/ $G$ be a finite or countably infinite graph. Then any consistent random
 ordering on\/ $G$ induces a consistent random ordering model on\/~$\cP_G$.
 Conversely, any consistent random ordering model on\/ $\cP_G$ is induced from
 a unique and consistent random ordering on\/~$G$. In particular, $G$ is
 uniform iff\/ $\cP_G$ is uniform.
\end{lemma}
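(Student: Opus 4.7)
The plan is to prove both directions by translating between a distribution on $\cO_G$ and a compatible system of distributions indexed by the finite induced subgraphs in~$\cP_G$.

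\textbf{Forward direction.} Given a consistent $\Prb_G\in\cD_G$ and any $H\in\cP_G$, pick an isomorphism $\phi\colon H\to H'$ of $H$ onto an induced subgraph $H'$ of $G$, and define $\Prb_H$ to be the pullback via $\phi$ of the ordering distribution that $\Prb_G$ induces on $V(H')$. To see this is independent of~$\phi$, if $\phi_1\colon H\to H_1'$ and $\phi_2\colon H\to H_2'$ are two such embeddings, then $\phi_2\circ\phi_1^{-1}\colon H_1'\to H_2'$ is an isomorphism of induced subgraphs of~$G$, so consistency of $\Prb_G$ forces the induced distributions on $V(H_1')$ and $V(H_2')$ to correspond under this isomorphism, and the two pullbacks to $H$ coincide. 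The same type of argument, applied inside~$G$, shows that each $\Prb_H$ is itself consistent and that \eqref{e:induced} holds for any embedding of one element of $\cP_G$ into another: realize the larger graph inside $G$, and the smaller one by restriction.

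\textbf{Reverse direction.} Conversely, suppose we are given a consistent model $(\Prb_H)_{H\in\cP_G}$. For each finite $S\subseteq V(G)$, let $\Prb_S$ be the distribution on orderings of~$S$ obtained from $\Prb_{G[S]}$ via the tautological isomorphism. The collection $(\Prb_S)$ is Kolmogorov-consistent: for $S\subseteq T$, the inclusion $G[S]\hookrightarrow G[T]$ is a $\cP_G$-morphism, so \eqref{e:induced} says that the marginal of $\Prb_T$ on~$S$ equals~$\Prb_S$. Viewing a total order on $V(G)$ as an element of $\set{0,1}^{V(G)\times V(G)}$ (the indicator of the relation $u<v$), Kolmogorov's extension theorem produces a unique probability measure $\Prb_G$ on this product space with the prescribed finite marginals. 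The set of relations that are total orders on $V(G)$ is the intersection, over finite $S\subseteq V(G)$, of the cylinder events ``the relation restricts to a total order on $S$''; each such event has full $\Prb_G$-measure, so $\Prb_G\in\cD_G$. Consistency of $\Prb_G$ follows because for any finite induced subgraph $H'\subseteq G$ the induced ordering distribution is $\Prb_{G[V(H')]}$, and isomorphic induced subgraphs give rise to corresponding distributions by the assumed consistency of~$(\Prb_H)$.

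\textbf{Uniqueness and conclusion.} Uniqueness of $\Prb_G$ is forced by the fact that the $\sigma$-algebra on $\cO_G$ is generated by the events $\set{u<v}$, so $\Prb_G$ is determined by its finite-dimensional marginals. The two constructions are mutually inverse, and both send the uniform ordering to the uniform model and vice versa, proving the final ``$G$ is uniform iff $\cP_G$ is uniform'' statement.

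The main obstacle I anticipate is the bookkeeping in the forward direction, where the same abstract $H\in\cP_G$ may embed in~$G$ in many ways (possibly with nontrivial automorphisms), so well-definedness of $\Prb_H$ rests squarely on consistency of~$\Prb_G$; after this, the reverse direction is a routine Kolmogorov extension together with a check that the extended measure actually sits on total orders.
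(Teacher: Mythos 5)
Your proof is correct and follows essentially the same route as the paper: pullback to define the model from $\Prb_G$ (with well-definedness via consistency), and reconstruction of $\Prb_G$ from its finite marginals for the converse. The only difference is that the paper's converse is terse (``this produces a well defined probability distribution in $\cD_G$''), whereas you make the underlying Kolmogorov extension argument explicit — including the useful observation that total orders on a countable set form a countable intersection of full-measure cylinder events, so the extended measure does live on $\cO_G$ — which is a welcome elaboration rather than a different approach.
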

\begin{proof}
Given a consistent ordering $\Prb_G$ on $G$, we define for each $H\in\cP_G$
the random ordering given by~\eqref{e:induced}, where $\phi\colon H\to H'$ is
any identification of $H$ with an induced subgraph $H'$ of~$G$. The fact that
$\Prb_G$ is consistent implies that the distribution of this ordering is
independent of the choice of~$\phi$, and the collection
$\set{\Prb_H}_{H\in\cP_G}$ is clearly a consistent random ordering model
on~$\cP_G$. Conversely, suppose we have a consistent random ordering model
$\set{\Prb_H}_{H\in\cP_G}$ on~$\cP_G$. Define a random ordering on $G$ so that
for any finite set of vertices $x_1,\dots,x_k\in V(G)$,
\begin{equation}\label{e:induced2}
 \Prb(x_1<x_2<\dots<x_k)=\Prb_H(x_1<x_2<\dots<x_k),
\end{equation}
where $H=G[\set{x_1,\dots,x_k}]$. Consistency of $\set{\Prb_H}_{H\in\cP_G}$
implies that this produces a well defined probability distribution in~$\cD_G$,
which is clearly itself consistent. Moreover, any distribution in $\cD_G$
that induces $\set{\Prb_H}_{H\in\cP_G}$ must satisfy~\eqref{e:induced2}, so
this distribution on $\cO_G$ is unique. The last statement also follows as the
random ordering on $G$ is uniform iff it is uniform when restricted to any
finite subgraph.
\end{proof}

The study of consistent ordering models on families of graphs was started
by Angel, Kechris, and Lyons~\cite{AKL}, who showed that the class of all
graphs is uniform, as well as, for example, the class of $K_n$-free graphs.
In fact they studied not only
graphs, but also hypergraphs and metric spaces, and gave several applications
of their results to uniquely ergodic groups. Russ Lyons suggested to the authors
that they continue the study of consistent ordering models on hereditary
properties of graphs.

The main aim of this paper is to show that for many natural choices of hereditary
property~$\cP$, the only consistent ordering model is uniform, thus greatly
extending the result just mentioned in~\cite{AKL}. In particular
we shall prove the following result in \refS{templates}.

\begin{theorem}\label{t:twin}
 Suppose that\/ $\cP$ is a hereditary property such that for any graph\/
 $G\in\cP$ and any vertex\/ $v\in G$ there exists a graph\/ $G'\in\cP$
 which is obtained from\/ $G$ by replacing\/ $v$ by two twin vertices\/
 $v_1$,~$v_2$ with the same neighbourhoods as\/ $v$ in\/ $G\setminus\set{v}$.
 Suppose also that there exists a graph\/ $G\in\cP$ that is not a disjoint
 union of cliques or a complete multipartite graph. Then\/ $\cP$ is uniform.
\end{theorem}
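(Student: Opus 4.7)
The plan is to use the twin hypothesis to manufacture enough symmetry inside graphs of $\cP$ that the consistency requirement pins down the ordering distribution to the uniform one.

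\emph{Reduction and setup.} The hypothesis that some $G\in\cP$ is neither a disjoint union of cliques nor a complete multipartite graph is equivalent to saying that $\cP$ contains a graph with both induced $P_3$ and induced $\overline{P_3}\cong K_2+K_1$; by hereditarity, $P_3$ and $\overline{P_3}$ are themselves in $\cP$. By the twin hypothesis, $\cP$ is closed under the operation of replacing any vertex by a pair of (false) twins, and hence in fact by any number $n$ of twins, obtained by iterating the operation.

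\emph{Step 1: pairwise balance.} Fix $G\in\cP$ and $u,v\in V(G)$. Twin $u$ to get $G'\in\cP$ with twins $u_1,u_2$, then twin $v$ inside $G'$ to get $G''\in\cP$ with twins $v_1,v_2$. The induced subgraph $H=G''[\{u_1,u_2,v_1,v_2\}]$ is $K_{2,2}$ if $uv$ was an edge of $G$ and $4K_1$ otherwise; in either case $\operatorname{Aut}(H)$ contains the involution exchanging the classes $\{u_1,u_2\}$ and $\{v_1,v_2\}$. Consistency forces $\Prb_H$ to be invariant under $\operatorname{Aut}(H)$, and pulling back along the isomorphisms $G''[\{u_1,v_1\}\cup(V(G)\setminus\{u,v\})]\cong G$ yields $\Prb_G(u<v)=\tfrac12$.

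\emph{Step 2: iterated twinning and exchangeability.} Replace $v\in V(G)$ by $n$ twins $v_1,\dots,v_n$ to obtain $G^{(n)}\in\cP$. The symmetric group $S_n$ acts on $G^{(n)}$ by automorphisms permuting the $v_i$, so the joint distribution of the positions of $v_1,\dots,v_n$ in the random order is exchangeable (conditionally on the ordering of $V(G)\setminus\{v\}$). Letting $n\to\infty$ and applying de Finetti's theorem, the empirical distribution of the positions of $v_1,v_2,\dots$ converges to a random probability measure $\mu_v$ on $[0,1]$, and conditionally on $\mu_v$ and on the ordering of the remaining vertices, the positions are i.i.d.~from $\mu_v$. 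The same is true after simultaneously twinning any finite collection of vertices, giving a collection of random measures $\{\mu_w\}$ together with a conditional independence structure dictated by the graph.

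\emph{Step 3: collapsing the mixing measure.} The main task is to show that each $\mu_v$ is almost surely Lebesgue measure on $[0,1]$, from which unconditional i.i.d.-uniform positions, and hence the uniform random ordering, follow. This is where the presence of both $P_3$ and $\overline{P_3}$ in $\cP$ is crucial: for a pair of vertices $u,v$ lying in an induced $P_3$ or $\overline{P_3}$, one can twin both and compare the joint de Finetti measures of the two twin classes. Using the pairwise balance of Step~1 (which must hold simultaneously for every pair of twin representatives and every external vertex) and the swap symmetry between the two classes supplied by further twinning of neighbours, one obtains enough equations to force the random measures $\mu_v$ and $\mu_u$ to be deterministic, and then equal to the uniform measure. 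Exclusion of the two degenerate families is essential: on a disjoint union of cliques one can order the cliques by a nontrivial exchangeable distribution (then uniformly within each clique), and dually on a complete multipartite graph one can order the parts nontrivially, both producing valid non-uniform consistent models.

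\emph{Main obstacle.} The hardest step is Step~3: pairwise balance by itself is a single linear constraint on each marginal, while uniformity on $k$-tuples is a great many constraints. The crux of the argument is to leverage the coexistence of $P_3$- and $\overline{P_3}$-type induced subgraphs to tie together the de Finetti mixing measures of twin classes associated with vertices of genuinely different neighbourhood types, and to bootstrap from pairwise symmetry, via increasingly elaborate twinned configurations, until the mixing measures are forced to be point masses at the uniform distribution.
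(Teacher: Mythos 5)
Your general plan (exploit twinning to produce exchangeable families, apply de Finetti, and then collapse the mixing measures using the presence of both $P_3$ and $\overline P_3$) is in the spirit of the paper, but the proposal has two concrete gaps, one structural and one in substance.

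\emph{The twin type is not under your control.} In the Reduction step you assume that the twin hypothesis gives \emph{non-adjacent} (false) twins. The hypothesis only provides, for each $G$ and $v$, \emph{some} $G'\in\cP$ in which $v$ is replaced by a pair of twins, which the paper explicitly allows to be adjacent or non-adjacent, and the type may vary from one application to the next. This breaks Step~1: if $uv\in E(G)$, $u$ is replaced by adjacent twins and $v$ by non-adjacent twins, then $H=G''[\{u_1,u_2,v_1,v_2\}]$ is $K_4$ minus the edge $v_1v_2$, whose automorphism group does \emph{not} contain the involution exchanging $\{u_1,u_2\}$ with $\{v_1,v_2\}$; so consistency does not force $\Prb_G(u<v)=\tfrac12$ in that case. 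More generally, iterated twinning may produce any mixture of cliques and independent sets on the twin classes. The paper circumvents this by blowing each vertex up to a large twin class, applying Ramsey's theorem inside each class to extract a large clique or large independent set, and then using the infinite pigeonhole principle to fix a single \emph{template} (one clique/empty label per vertex of $G$) for which $\cP_{G'}\subseteq\cP$. Without some device of this kind you cannot even set up the symmetric configurations your argument relies on.

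\emph{Step 3 is not an argument.} The passage ``one obtains enough equations to force the random measures $\mu_v$ and $\mu_u$ to be deterministic, and then equal to the uniform measure'' is precisely the content that needs to be proved, and it is where essentially all of the difficulty lies. In the paper this is Section~4: it introduces the random distribution functions $F_{u,v}$ (your $\mu_v$ in disguise), proves their basic properties (Lemmas~\ref{l:F}--\ref{l:UFindep}, \refL{FV}), establishes the equivalence relation $\equiv$ (\refL{=}, \refC{equiv}), and then kills the variance of $F_{u,v}(x)$ by careful case analysis of three-vertex subtemplates via moment and Cauchy--Schwarz computations (\refL{e-f-f}, \refL{e-f,f}, \refL{f-e-f}, \refL{f-f-f}), culminating in \refT{template}. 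The final reduction, \refL{3}, shows that uniformity on every $3$-element vertex set already forces uniformity of the whole order; your Step~1 supplies only the analogous statement for $2$-element sets, which is far from sufficient. As it stands, the proposal identifies the right ingredients and correctly names the obstacle, but it does not close the gap; the variance/Cauchy--Schwarz machinery (or something equivalent) is indispensable and is absent.
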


Recall that vertices $v_1,v_2\in G$ are called {\em twins\/} if the
neighbourhoods of $v_1$ and $v_2$ are the same in $G\setminus\set{v_1,v_2}$.
Twin vertices may be either adjacent or non-adjacent.

\begin{remark}
The hereditary properties satisfying the assumption of \refT{twin}
have an equivalent characterization using the theory of graph limits
(see~\cite{Lovasz}). Each graph limit (or graphon) $W$ defines a hereditary
property $\cP_W$ consisting of all graphs $G$ such that the induced subgraph
density $t_{\mathrm{ind}}(G,W)>0$. Lov\'asz and Szegedy
\cite[Proposition 4.10]{LSz:regularity} have shown that
$\cP$ equals a union $\bigcup_{W\in \cW}\cP_W$ for some set $\cW$ of graph
limits if and only if the first condition in \refT{twin} holds.
\end{remark}

\vspace{5pt}
The next result concerns $\cH$-free graphs introduced earlier: it
follows from \refT{twin}, see \refS{templates}.

\begin{theorem}\label{t:free}
 Suppose that\/ $\cH$ is a set of finite graphs such that either no\/
 $H\in\cH$ contains a pair of adjacent twins, or no\/ $H\in\cH$ contains a
 pair of non-adjacent twins. Suppose also that\/ $\cH$ does not contain the
 path\/ $P_3$ on three vertices, or its complement\/~$\bP_3$.
 Then\/ $\cF_\cH$ is uniform.
\end{theorem}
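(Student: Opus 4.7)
The plan is to deduce \refT{free} from \refT{twin} by verifying the two hypotheses of the latter for $\cP=\cF_\cH$, choosing whichever flavour of twin duplication (adjacent or non-adjacent) is permitted by the hypothesis on~$\cH$.

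To check the duplication hypothesis, I would assume first that no $H\in\cH$ contains adjacent twins. Given $G\in\cF_\cH$ and $v\in V(G)$, form $G'$ by replacing $v$ with two adjacent twins $v_1,v_2$ whose neighbourhood in $V(G)\setminus\{v\}$ equals that of~$v$. If some induced copy $H'\subseteq G'$ of an $H\in\cH$ existed, a short case analysis would give a contradiction: either (a) $H'$ meets $\{v_1,v_2\}$ in at most one vertex, whence relabelling the used twin (if any) as $v$ exhibits $H'$ as an induced subgraph of $G$, contradicting $G\in\cF_\cH$; or (b) $H'$ contains both $v_1$ and $v_2$, which then form an adjacent twin pair in $H'\cong H$, contradicting the hypothesis on~$\cH$. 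Hence $G'\in\cF_\cH$. The case in which no $H\in\cH$ has non-adjacent twins is symmetric, using non-adjacent duplication.

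For the existence hypothesis, I would invoke the standard characterisations that $G$ is a disjoint union of cliques iff $G$ is $P_3$-free, and that $G$ is complete multipartite iff $G$ is $\bP_3$-free; so it suffices to produce a graph in $\cF_\cH$ containing both an induced $P_3$ and an induced $\bP_3$. In the adjacent-twin case the hypothesis already forces $K_2\notin\cH$; apart from the trivial situations $K_1\in\cH$ or $\bar K_2\in\cH$ (in which $\cF_\cH$ is either empty or consists only of complete graphs, both trivially uniform), one has $P_3\in\cF_\cH$. Duplicating a leaf of $P_3$ to adjacent twins then yields, by the first step, the paw graph (a triangle with a pendant edge) inside $\cF_\cH$; this paw contains an induced $P_3$ along the pendant edge together with an adjacent triangle edge, and an induced $\bP_3$ on the two twin vertices together with the pendant leaf. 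The non-adjacent-twin case is analogous, starting from $\bP_3$ and duplicating a non-isolated vertex to obtain $P_3+K_1\in\cF_\cH$. An application of \refT{twin} would then finish the proof.

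The only step that will need any real care is case (a) above, where I must verify that an induced subgraph of $G'$ avoiding at least one of $v_1,v_2$ is genuinely an induced subgraph of $G$ after relabelling; but this is immediate because the adjacencies of $v_1$ and $v_2$ in $G'$ were copied verbatim from those of $v$ in $G$. Everything else is routine bookkeeping.
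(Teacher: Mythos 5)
Your proof is correct and follows essentially the same route as the paper: verify the twin-duplication hypothesis by the case analysis on whether a forbidden $H$ could use both twins, then exhibit a paw (or $P_3\cup K_1$) in $\cF_\cH$ to meet the second hypothesis of Theorem~\ref{t:twin}, disposing of the degenerate situations where $\cH$ contains a graph on at most two vertices. The only cosmetic differences are that you treat both twin flavours explicitly rather than reducing by complementation, and you note that $K_2\notin\cH$ is automatic; neither changes the substance.
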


For example, \refT{free} applies to triangle-free graphs (as a triangle does
not contain a pair of non-adjacent twins), claw-free graphs (the claw
$K_{1,3}$ does not contain adjacent twins), and chordal graphs
($\set{C_4,C_5,C_6,\dots}$-free graphs) as cycles of length at least 4 do not
contain adjacent
twins. However it cannot be applied to, for example, the hereditary property
consisting of all graphs of girth at least~5 ($\set{C_3,C_4}$-free graphs) as
$C_3$ contains a pair of adjacent twins and $C_4$ contains a pair of
non-adjacent twins. We can however deduce that the class of all graphs with
girth at least $g$ is uniform from the following more general result,
proved in \refS{glue}.

\begin{theorem}\label{t:joins}
 Assume\/ $\cP$ is a hereditary property such that for any\/ $G_1,G_2\in\cP$
 and any vertices\/ $v_1\in V(G_1)$, $v_2\in V(G_2)$, the graph obtained from
 the disjoint union\/ $G_1\cup G_2$ by identifying the vertices\/ $v_1$
 and\/ $v_2$ also lies in\/~$\cP$. Then\/ $\cP$ is uniform.
\end{theorem}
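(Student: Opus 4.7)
The plan is to show that, for any consistent ordering model $\{\Prb_G\}_{G\in\cP}$, each $\Prb_G$ is the uniform distribution on $\cO_G$; this is by definition the uniformity of $\cP$. The trivial case where every $G\in\cP$ is edgeless is handled by the homogeneity of $\bar K_n$, so I assume $K_2\in\cP$, and note that iterating the gluing closure then places every finite tree in $\cP$.

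The engine of the argument is the following exchangeability construction. Fix $G\in\cP$ and a vertex $v\in V(G)$, and let $G^{(k)}\in\cP$ denote the graph obtained by gluing $k$ disjoint copies of $G$ at the single vertex $v$. The automorphism group of $G^{(k)}$ contains $S_k$ acting by permuting the copies, so the induced orderings on the $k$ copies of $V(G)\setminus v$ form an exchangeable sequence under $\Prb_{G^{(k)}}$. Letting $k\to\infty$ and invoking \refL{consistent} on the countable graph $G^{(\infty)}=\bigcup_k G^{(k)}$ (whose finite induced subgraphs all lie in $\cP$, so the model extends to a consistent random ordering on $G^{(\infty)}$), the de Finetti--Hewitt--Savage theorem yields a random directing probability measure $\mu$ on $\cO_G$ such that, conditionally on $\mu$, the orderings on the copies are i.i.d.\ draws from~$\mu$. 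Since $\E\mu=\Prb_G$, it suffices to prove that $\mu$ is almost surely the uniform measure on $\cO_G$.

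I would prove $\mu = $\,uniform a.s.\ by induction on $|V(G)|$; the cases $|V(G)|\le 2$ follow from the homogeneity of $K_2$ and $\bar K_2$. In the inductive step, the hypothesis combined with consistency gives uniformity of $\Prb_H$ for every proper induced subgraph $H\subsetneq G$, whence every proper-subset marginal of $\mu$ is uniform in expectation. To promote this to deterministic uniformity of $\mu$ itself, I would introduce a second gluing at a distinct vertex $u\ne v$: the graph $G^{(k,l)}\in\cP$ obtained by gluing $k$ copies of $G$ at $v$ and $l$ copies at $u$ supports an $(S_k\times S_l)$-exchangeable structure whose joint consistency with the one-vertex glued graphs forces the two-copy distribution $\Prb_{G^{(2)}}$ to factor (over the shared vertex) into the product of two uniform copies of $\Prb_G$, which in turn pins $\mu$ down to the uniform measure.

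The main obstacle is this final rigidity step---extracting deterministic uniformity of $\mu$ from exchangeability, consistency, and uniform proper-subset marginals. This is a finite-dimensional problem on the simplex of probability measures on $\cO_G$, but the combinatorics of the constraints are intricate, and I expect the cleanest route uses a multi-vertex gluing construction, simultaneously gluing copies of $G$ at several vertices, to encode all the necessary symmetries at once.
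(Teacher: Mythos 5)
Your opening move is the right one and matches the spirit of the paper's argument: glue infinitely many copies of $G$ along a common piece, use the symmetric group permuting the copies to get an exchangeable sequence of induced orderings, and invoke de Finetti to obtain a random directing measure that must be shown to be almost surely uniform. But the rigidity step you flag as ``the main obstacle'' is not a detail --- it is the entire content of the proof, and your plan for it does not close the gap. Knowing $\E\mu = \Prb_G$ and knowing the proper-subset marginals of $\E\mu$ are uniform (by the inductive hypothesis) controls only the \emph{first} moment of $\mu$, which says nothing about whether $\mu$ is deterministic. Your proposed fix --- gluing at a second vertex $u\ne v$ --- is asserted to force a product structure on $\Prb_{G^{(2)}}$, but no mechanism is given, and it is not clear it would: the copies in $G^{(k,l)}$ interact with the shared vertices $u,v$ in ways that do not obviously factor. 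Moreover, $G^{(2)}$ is \emph{larger} than $G$, so your induction on $|V(G)|$ gives you nothing about it.

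The paper resolves this with a sharper setup (\refT{glue} and its proof) containing three ingredients your sketch is missing. First, the inductive statement is strengthened: one proves by induction on $|V(G)|$ that the ordering on $G\cup G\cup\bKn$ is uniform for all $n$, not merely that $\Prb_G$ is uniform; the empty vertices $\bKn$ act as a neutral reference against which each other vertex acquires a rank $V_{i,j,k}\in[0,1]$ (as in \eqref{e:V}), turning the de Finetti directing measure into a random measure on a cube $[0,1]^s$ rather than on the unwieldy $\cO_G$. Second, one glues not at a single vertex but along an arbitrary non-empty proper induced subgraph $H$, and the exchangeable sequence is indexed by the gluing axis $j$, with each term being the vector of ranks of one copy of $G'=G\setminus H$. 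Third --- and this is the step that actually pins $\mu$ down --- one reads off the \emph{second} moment of $\mu$ from the inductive hypothesis applied to the strictly smaller graph $G'$: since $G'\cup G'\cup\bKn$ is an induced subgraph with fewer vertices, its ordering is uniform by induction, so the pair $(V_{i,1},V_{i,2})$ is jointly uniform, hence $\E(\mu(S)^2)=|S|^2=(\E\mu(S))^2$ and $\mu$ is a.s.\ constant. Nothing in your outline supplies this second-moment control, because your induction is anchored to $G$ itself rather than to a genuinely smaller graph. In the paper, \refT{joins} itself is then a short reduction: gluing along end-vertices of a $P_3$ and deleting its middle vertex shows $[G]^n_{\set{v}}\cup[G]^n_{\set{v}}\cup\bKn\in\cP$, so \refT{glue} applies.
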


\begin{remark}
The condition of \refT{joins} is equivalent to the condition that a graph
$G$ lies in $\cP$ if and only if all its 2-connected induced subgraphs do
(or $\cP$ consists only of the empty graph~$K_1$). Indeed, it
is not hard to see that $\cP$ is also closed under disjoint unions.
In particular, \refT{joins} applies to the class of all bipartite graphs, the
class of all forests, and the class of all planar graphs, thus answering
Question~3.4 of~\cite{AKL}. It also generalises Theorem~5.1 of~\cite{AKL}.
Indeed, it shows that the class of all $\cH$-free graphs is uniform
whenever $\cH$ consists only of 2-connected graphs.
\end{remark}

We actually derive \refT{joins} from the more general, but technical,
\refT{glue} given in \refS{glue}.

Although \refT{joins} applies to the class of all
forests, in the case of hereditary properties of forests we can
say much more. Recall that a {\em leaf\/} is a vertex of degree~1.

\begin{theorem}\label{t:leaves}
 Suppose\/ $\cP$ is a hereditary property of forests and suppose that for
 every non-empty forest\/ $F\in\cP$, at least one of the following holds.
 \begin{romenumerate}
  \item There exists a leaf\/ $u$ of\/ $F$ such that any forest obtained
   from\/ $F$ by replacing\/ $u$ by an arbitrary number of\/
   $($non-adjacent$)$ twins and then adding an arbitrary number of independent
   vertices lies in\/~$\cP$.
 \item There exist two leaves\/ $u_1$, $u_2$ of\/ $F$ adjacent to distinct
   vertices\/ $v_1,v_2\in V(F)$ such that the forest obtained by replacing
   both\/ $u_1$ and\/ $u_2$ by arbitrary numbers of\/ $($non-adjacent$)$ twins
   lies in\/~$\cP$.
 \end{romenumerate}
 Then\/ $\cP$ is uniform.
\end{theorem}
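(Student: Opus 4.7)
The plan is to fix an arbitrary consistent ordering model $\Prb$ on $\cP$ and prove by induction on $|V(F)|$ that $\Prb_F$ coincides with the uniform distribution on $\cO_F$ for every $F\in\cP$. Base cases $|V(F)|\le 2$ are immediate (for $|V(F)|=2$, consistency applied to the swap isomorphism $F\to F$ gives $\Prb(a<b)=\Prb(b<a)=1/2$). For the inductive step, fix $F\in\cP$ of order $k\ge 3$ and use heredity plus the induction hypothesis to conclude that $\Prb_{F\setminus\set{u}}$ is uniform for any leaf~$u$; the task then reduces to showing that, conditional on the ordering of $V(F)\setminus\set{u}$, the position of $u$ in~$\Prb_F$ is uniform in the $k$ available slots.

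For case~(i), let $u$ be the distinguished leaf, let $v$ be its neighbour, and set $H=F\setminus\set{u}$. For each $m\ge 1$ and $n\ge 0$, form $F_{m,n}\in\cP$ by replacing $u$ with non-adjacent twin leaves $u_1,\dots,u_m$ of~$v$ and adjoining isolated vertices $w_1,\dots,w_n$. Two structural features of $\Prb_{F_{m,n}}$ are key: (a) the automorphisms of $F_{m,n}$ that permute the $u_i$'s among themselves and the $w_j$'s among themselves yield exchangeability of each family under~$\Prb_{F_{m,n}}$; and (b) every subset of $\set{u_1,\dots,u_m,w_1,\dots,w_n}$ induces an empty subgraph of $F_{m,n}$, and any bijection between two such subsets is an induced-subgraph isomorphism, so consistency forces the joint ordering of $\set{u_1,\dots,u_m,w_1,\dots,w_n}$ in $F_{m,n}$ to be uniform over all $(m+n)!$ linear orders. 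Conditioning on the ordering $\sigma$ of~$V(H)$ (which is uniform by induction on $H$ and hence, via consistency applied to $H\subseteq F_{m,n}$, has the same distribution under $\Prb_{F_{m,n}}$), de Finetti's theorem applied to each exchangeable family gives, in the limits $m,n\to\infty$, $\sigma$-measurable random probability vectors $\Phi=\Phi(\sigma)$ and $\Psi=\Psi(\sigma)$ on the $k$ slots of $\sigma$ governing the asymptotic placements of the $u_i$'s and $w_j$'s. The consistency in (b) combined with the observation that $\set{h}\cup\set{w_j}$ also induces an empty graph for every $h\in V(H)$ (forcing $h$ to be uniformly placed among the $w_j$'s) pins down $\Psi$ to be the deterministic uniform vector; the uniform joint order in (b) then forces $\Phi=\Psi$ a.s., so $u$'s conditional slot-distribution in~$\Prb_F$ is uniform.

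Case~(ii) runs in parallel: with twin families $u_{1,1},\dots,u_{1,a}$ of~$v_1$ and $u_{2,1},\dots,u_{2,b}$ of~$v_2$ inside the forest $F_{a,b}\in\cP$, the hypothesis $v_1\ne v_2$ ensures that $\set{u_{1,i}}\cup\set{u_{2,j}}$ again induces an empty graph, which activates the same de Finetti plus consistency mechanism with the second twin family playing the role of the $w_j$'s. The induction hypothesis applied to the smaller forest $F\setminus\set{u_1}$ fixes the uniform joint distribution of $\sigma$ and the position of $u_2$, and a symmetric application forces the uniform conditional placement of $u_1$. The principal obstacle in either case is to extract deterministic uniformity of the de Finetti limit~$\Phi$ from the compatibility of two exchangeable families that are pairwise-indistinguishable as induced subgraphs but adjacent to different vertices in the ambient forest; the argument must weave together consistency across different $\cP$-graphs (supplying the uniform joint order) with automorphism-exchangeability within a single graph (supplying the i.i.d.\ de Finetti structure) and with the inductive hypothesis applied to the strict subforest $F\setminus\set{u}$ or $F\setminus\set{u_1}$.
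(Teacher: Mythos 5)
Your overall strategy (induction on the order of~$F$, then probing the conditional placement of a leaf with families of empty-graph vertices and de~Finetti) is in the right spirit, but the induction hypothesis you carry is too weak to close the key step, and I do not see how the gap can be repaired without strengthening it.

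The crux is the step ``pins down $\Psi$ to be the deterministic uniform vector.'' Let $H=F\setminus\set{u}$ and let $Z_h:=\lim_n\frac1n\sum_j\id{h>w_j}$ record where $h\in V(H)$ falls among the isolated probes $w_1,w_2,\dots$. The vector $\Psi$ over the $k$ slots is precisely the vector of gaps of the sorted $(Z_h)_{h\in V(H)}$, so knowing $\Psi$ is the same as knowing the \emph{joint} law of $(Z_h)_{h\in V(H)}$ given $\sigma$. Your input is (a) $\sigma$ is uniform (by induction on $H$), and (b) for each single $h$, the set $\set{h}\cup\set{w_1,\dots,w_n}$ is an empty induced subgraph, which gives the \emph{marginal} $Z_h\sim U(0,1)$. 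Neither of these controls the dependence among the $Z_h$'s. For a concrete warning sign, take $F=P_3$ with $u$ a leaf and $H=\set{v,h_2}$ the remaining edge. Then $\set{v,h_2,w_1,\dots,w_n}$ is $K_2\cup\bKn$, which is \emph{not} an empty graph, so consistency gives you no direct handle on the joint placement of $v$ and $h_2$ among the $w_j$'s; indeed, on the hereditary class $\set{K_2\cup\bKn}_n$ alone there do exist non-uniform consistent orderings (this is exactly the situation of \refT{disjoint} with $H=K_2$), in which $Z_v$ and $Z_{h_2}$ are correlated and $\Psi$ is not the uniform vector. What you would actually need to rule this out is that $\Prb_{H\cup\bKn}$ is uniform for every $n$ --- but for $n\ge2$ the forest $H\cup\bKn$ has more vertices than $F$, so it is out of reach of an induction whose hypothesis is merely ``$\Prb_{F'}$ uniform for $|V(F')|<|V(F)|$.'' The same difficulty appears in your case (ii), where the role of the $w_j$'s is played by the twins of $u_2$ and you would need uniformity on $(F\setminus\set{u_1})$ with arbitrarily many extra copies of $u_2$. (A secondary issue: de~Finetti makes $\Phi$ and $\Psi$ measurable with respect to the tail $\sigma$-field of the probe sequences, not with respect to $\sigma$ alone, so the assertion that they are $\sigma$-measurable also needs justification; but the main obstruction is the one above.)

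The paper's proof resolves this by strengthening the induction hypothesis. It proves, by induction on $|V(F)|$, the stronger claim that for every $u\in S_F\cup\set{*}$ the ordering on $F^u_n$ (which is $F$ with the number of leaves at $u$, or isolated vertices, changed to exactly $n$) is uniform for \emph{every} $n\ge0$, provided these graphs lie in $\cP$. Because $F^{u,v}_{1,0}$ (respectively $F^{u,v}_{0,0}$) is a \emph{proper} induced subforest of $F$, the induction hypothesis applied to it yields uniformity on $F^{u,v}_{n,0}$ for all $n$ simultaneously, i.e.\ on $F^{u,v}_{\infty,0}$; this is exactly the joint-distribution information that your argument is missing. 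With that in hand, the paper shows the variables $V_i$ recording the positions of the $v$-probes relative to the $u$-probes are i.i.d.\ $U(0,1)$ (since the two probe families together span a homogeneous set), that these $V_i$ are independent of the ordering on $F^{u,v}_{\infty,0}$, and that the ordering on $F^{u,v}_{\infty,\infty}$ is a.s.\ determined by these two independent pieces, whence it is uniform. If you rephrase your induction to carry a claim of the type ``$F^u_n$ is uniform for all $n$'' rather than merely ``$F$ is uniform,'' your de~Finetti/empty-set probing scheme should go through essentially along the paper's lines.
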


\refT{leaves} too is proved in \refS{glue}.
Note that the conditions of \refT{leaves} imply that either $\cP$ consists
entirely of empty graphs, or $\cP$ contains all graphs of the form
$K_{1,n}\cup\bK_m$. (Consider the case when $F$ is a single edge.)
Indeed, the class $\set{K_{1,n}\cup\bK_m}_{n,m\ge0}$ is an example where
\refT{leaves} applies. By comparison, the class of all induced subgraphs
of stars $K_{1,n}$, $n\ge1$, (i.e., the class of all stars and empty graphs)
is not uniform (see \refE{unionKn} below).

\section{Some non-uniform consistent orderings}\label{s:nonuniform}

Before we prove that many properties $\cP$ are uniform, we first give some
examples of properties and graphs with non-uniform consistent orderings.

\begin{example}\label{x:unionKn}
 Suppose that every graph $G\in\cP$ is a disjoint union of cliques,
 and that some $G\in\cP$ is non-homogeneous. We can
 construct a non-uniform consistent order by first taking a uniform random
 order of the cliques, and then a uniform random order of the vertices
 within each clique. By taking graph complements we can similarly construct
 an example when every $G\in\cP$ is a complete multipartite graph. We take a
 uniform random order of the partite classes, and then a uniform random order
 of the vertices within each partite class.
\end{example}

The following results give constructions of non-uniform consistent orderings
for large classes of graphs and properties. The first construction was
suggested by Leonard Schulman and proved by Angel, Kechris and Lyons~\cite{AKL};
the alternative proof we give below was sketched to us by Lyons.

\begin{theorem}\label{t:bounded}
 Suppose that there exists\/ $\Delta<\infty$ such that for every graph\/
 $G\in\cP$, the maximum degree of\/ $G$ is at most\/~$\Delta$. Then there
 exists a consistent random order model on\/ $\cP$ that is non-uniform on
 any non-homogeneous graph in\/~$\cP$.
\end{theorem}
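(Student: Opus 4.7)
The plan is to construct an explicit non-uniform consistent ordering model on~$\cP$. By \refL{consistent}, giving a consistent model on $\cP$ amounts to specifying, for each $G\in\cP$, a random ordering of $V(G)$ that is compatible with taking induced subgraphs, so I can describe one construction graph by graph and verify compatibility. The bounded-degree hypothesis is what makes non-uniformity possible: the uniform model ignores all local structure, and to break this we need each vertex to depend on a bounded amount of its own neighborhood, which bounded degree guarantees.

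My approach is a factor-of-IID construction. I would attach to each vertex $v$ an independent uniform random variable $X_v\in[0,1]$ (equivalently, the first arrival time of an independent rate-$1$ Poisson process), and sort vertices by a mild perturbation of $X_v$. The natural ansatz is
\[
 Y_v = X_v + \eps\sum_{u\sim v}\psi(X_u)
\]
for some bounded function $\psi$ and some $\eps$ small in terms of $\Delta$ and $\|\psi\|_\infty$, chosen so that the induced ordering is a small perturbation of the $X_v$-ordering. On any $G$ with an edge this ordering should be non-uniform: a vertex with a neighbor has its sort key shifted by a non-degenerate random amount, so on an induced $P_3$ or an induced $K_2\cup\bK_1$ the marginal probability that a specific vertex comes first would differ from $1/3$. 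Checking non-uniformity on such a small triple should be a short computation, and ``non-homogeneous'' together with bounded degree forces such a triple to appear.

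The main obstacle is ensuring consistency across induced subgraphs: if $H\subseteq G$ is induced, the joint law of $(Y_v)_{v\in V(H)}$ under the randomness on $G$, marginalized over $(X_u)_{u\in V(G)\setminus V(H)}$, must equal the law obtained by running the construction on $H$ alone. The naive ansatz fails for exactly this reason---the sum $\sum_{u\in N_G(v)\setminus V(H)}\psi(X_u)$ persists as an extra independent random shift that is not present in~$H$. To repair this I would try more refined schemes: attaching the i.i.d.\ randomness to edges rather than to vertices (so edges with both endpoints outside $V(H)$ automatically cease to contribute, though cross-edges still cause trouble), introducing a compensating term designed to telescope under restriction, or replacing the deterministic sum by a conditional expectation that absorbs the removed contribution into a deterministic piece. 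The bounded-degree assumption enters quantitatively because each vertex is influenced by at most $\Delta$ others, so the ``damage'' from integrating out removed neighbors has bounded complexity and can plausibly be compensated. Once some such refined scheme is shown to be consistent, the non-uniformity argument above completes the proof.
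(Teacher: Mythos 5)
Your ansatz $Y_v=X_v+\eps\sum_{u\sim v}\psi(X_u)$ correctly identifies the central obstacle---consistency under passing to induced subgraphs---but it leaves that obstacle unresolved, and I do not believe any of the candidate repairs you list can work. The difficulty is structural: when $H\subseteq G$ is induced, the contribution of $N_G(v)\setminus V(H)$ to $Y_v$ depends on $\deg_G(v)-\deg_H(v)$, which is \emph{not} an invariant of the induced subgraph $H$. Consistency requires the marginal law of the ordering on $V(H)$ computed in $G$ to depend on $H$ alone, so any scheme where a vertex's sort key is a nondegenerate function of its full $G$-neighbourhood must fail, and edge-based randomness hits the same wall via the cross-edges you already flagged. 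The ``compensating term'' idea would have to know $\deg_G(v)$ to cancel the shift, which $H$ does not provide. Thus the proposal as written is a plan with the hard step missing, not a proof.

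The paper's construction solves exactly this problem by being global rather than local: place each vertex $x$ of $G$ at the $x$-th column $p_x$ of a symmetric square root $B=(I_n+\eps A)^{1/2}$ (well-defined since $\eps<1/\Delta$ makes $I_n+\eps A$ positive definite), so that $\|p_x-p_y\|$ depends only on whether $xy$ is an edge, and then sort by $p_x\cdot u$ for a uniformly random unit vector $u$. Consistency is automatic because the point configuration of any induced subgraph is determined up to isometry by that subgraph alone, and a random-direction projection is an isometry invariant. It is instructive that your ansatz is precisely the first-order Taylor expansion of this: $B\approx I+\tfrac{\eps}{2}A$ gives $p_x\cdot u\approx u_x+\tfrac{\eps}{2}\sum_{y\sim x}u_y$, but truncating at first order is exactly what destroys consistency, while the exact square root (which involves all powers of $A$, hence long-range information) restores it via the isometry argument. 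Your non-uniformity check on a $P_3$ or $\bP_3$ then matches the paper's: a random projection of a non-equilateral triangle is non-uniform.

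To make this a complete proof you would need either to adopt the Euclidean-embedding route, or to exhibit a concrete local scheme and actually verify the consistency identity; as things stand the proposal does not establish the theorem.
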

\begin{proof}
Let $G\in\cP$ be a graph with $n$ vertices. We first show that we can embed
$G$ into Euclidean space $\R^n$ in such a way that the distance between
vertices $x,y\in V(G)$ is $c_0$ if $x$ and $y$ are not adjacent, and
$c_1\ne c_0$ if $x$ and $y$ are adjacent in~$G$. Indeed, let $A=(a_{xy})$ be
the adjacency matrix of~$G$, defined by $a_{xy}=1$ if $xy\in E(G)$ and
$a_{xy}=0$ otherwise. Then $A$ is symmetric and all its eigenvalues
are real and lie
between $-\Delta$ and~$\Delta$. Thus if $\eps<1/\Delta$, the matrix
$I_n+\eps A$ is positive definite, and so there exists a symmetric matrix
$B=(b_{ij})$ such that $B^TB=B^2=I_n+\eps A$. Place each vertex $x\in V(G)$
at the point $p_x=(b_{ix})_{i=1}^n\in\R^n$. Then the distance between
any two distinct vertices $x,y\in V(G)$ is given by
$\|p_x-p_y\|^2=p_x\cdot p_x-2p_x\cdot p_y+p_y\cdot p_y=2-2\eps a_{xy}$.
Thus non-adjacent vertices are at distance $c_0=\sqrt2$ and adjacent vertices
are at distance $c_1=\sqrt{2-2\eps}$.

Now construct a random ordering of the vertices of $G$ by taking a unit vector
$u\in\R^n$ uniformly at random, and setting $x<y$ if $p_x\cdot u<p_y\cdot u$.
This almost surely gives a total ordering on $V(G)$ and it is clear that it is
consistent. Indeed, any induced subgraph $H$ is mapped to a set of points that
is isometric to the set of points produced by the same construction applied
to~$H$. We also note that this ordering is non-uniform on~$G$, provided
that $G$ is not homogeneous. Indeed, any non-homogeneous graph contains
a subgraph isomorphic to either the path $P_3$ or its complement~$\bP_3$, and
so it is enough to show that the ordering is non-uniform on any such subgraph.
On such a subgraph, the ordering is given by a
random projection of a non-equilateral triangle, which it is easy to see is
non-uniform. For example, the probability that a vertex $v$ is in the middle
of the ordering is proportional to the angle at the corresponding vertex of
the triangle.
\end{proof}

\begin{lemma}\label{l:almostuniform}
 Let\/ $G$ be a non-homogeneous graph with\/ $n$ vertices. Then there exists a
 non-uniform consistent random ordering that is uniform on any subset of\/
 $n-1$ vertices. Moreover it can be realised by assigning uniform\/
 $($dependent$)$ random variables\/ $X_v\in[0,1]$ to vertices\/ $v\in V(G)$
 in such a way that any set of\/ $n-1$ variables\/ $X_v$ are independent.
\end{lemma}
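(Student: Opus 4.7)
The plan is to construct the required ordering as the one induced by real-valued random variables $(X_v)_{v\in V(G)}$ whose joint density has the form $f(x)=1+\eps h(x)$ on $[0,1]^n$, where $n=|V(G)|$, $\eps>0$ is sufficiently small, and $h\colon[0,1]^n\to\R$ is a bounded function to be constructed. I aim to impose three conditions on $h$: (i) $\int_0^1 h(x)\,dx_v\equiv 0$ as a function on $[0,1]^{V(G)\setminus\set{v}}$ for each $v\in V(G)$; (ii) $h$ is invariant under the natural $\mathrm{Aut}(G)$-action on the coordinates; (iii) $\int_{\Delta_\sigma} h(x)\,dx\neq 0$ for some ordering simplex $\Delta_\sigma=\set{x:x_{\sigma(1)}<\dots<x_{\sigma(n)}}$. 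Iterating (i) shows the marginal density of any $k\le n-1$ coordinates is identically $1$, so any $n-1$ of the $X_v$ are independent and uniform, and the induced ordering on every $(n-1)$-subset of $V(G)$ is uniform. Condition (ii), together with (i), ensures consistency (automatic on proper induced subgraphs, and reducing to $\mathrm{Aut}(G)$-invariance on $G$ itself), while (iii) prevents the overall ordering from being uniform. For $\eps<1/\|h\|_\infty$ we have $f\ge0$, so this is a valid probability density.

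Since $G$ is non-homogeneous, the $\mathrm{Aut}(G)$-action on $\binom{V(G)}{2}$ has at least two orbits (at least one contained in the edges and one in the non-edges). Fix a proper orbit $O\subsetneq\binom{V(G)}{2}$, a representative $\set{a,b}\in O$, and two bounded mean-zero functions $\phi,\psi\colon[0,1]\to\R$ with $\phi\neq\psi$. Set
\[
 h_0(x)=\phi(x_a)\phi(x_b)\prod_{v\notin\set{a,b}}\psi(x_v)
\]
and take $h$ to be the $\mathrm{Aut}(G)$-symmetrisation of $h_0$. Rearranging gives $h(x)=|O|^{-1}\sum_{\set{p,q}\in O}\phi(x_p)\phi(x_q)\prod_{v\notin\set{p,q}}\psi(x_v)$, which satisfies (i) (each summand has zero integral in every single coordinate) and (ii) (by construction).

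The main obstacle is verifying (iii) for a suitable $(\phi,\psi)$. Substituting $y_k=x_{\sigma(k)}$ gives
\[
 \int_{\Delta_\sigma} h(x)\,dx=\frac{1}{|O|}\sum_{P\in\sigma^{-1}(O)}K(P),\qquad K(\set{p_1,p_2}):=\int_{\Delta_{\mathrm{id}}}\phi(y_{p_1})\phi(y_{p_2})\prod_{k\notin\set{p_1,p_2}}\psi(y_k)\,dy.
\]
Since $O$ is a proper subset of $\binom{V(G)}{2}$, its stabiliser in $S_n$ is a proper subgroup, so there exist $\sigma_1,\sigma_2\in S_n$ with $\sigma_1^{-1}(O)\neq\sigma_2^{-1}(O)$; it then suffices to arrange that the kernel $K\colon\binom{[n]}{2}\to\R$ is sufficiently non-constant that the two sums disagree. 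When $\phi=\psi$, all factors become the same symmetric product and $K$ is constant (the ordering stays uniform); for $\phi\neq\psi$, the tensor products $y\mapsto\phi(y_{p_1})\phi(y_{p_2})\prod_{k\notin P}\psi(y_k)$ are linearly independent in $L^2([0,1]^n)$ for distinct $P$, and (iii) holds generically. Concretely, taking $\phi(x)=x-1/2$ and $\psi(x)=(x-1/2)^2-1/12$ (orthogonal mean-zero polynomials), one verifies (iii) by direct computation of the finitely many $K(P)$, exploiting the reflection symmetry $y_k\leftrightarrow 1-y_{n+1-k}$ of $\Delta_{\mathrm{id}}$ to reduce the number of distinct cases. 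With such $(\phi,\psi)$ and sufficiently small $\eps$, the random variables $(X_v)$ with density $1+\eps h$ furnish the required realisation.
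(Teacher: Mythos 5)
Your construction is a genuinely different route from the paper's: there one conditions i.i.d.\ uniforms on the modular constraint $\sum_v\eps_v X_v\equiv\alpha\bmod 1$ attached to a uniformly random edge, and then establishes non-uniformity on $G$ itself via explicit Fourier calculations that reduce to Bernoulli polynomials (\refL{edgedist}). Your perturbative density $1+\eps h$, with $h$ an $\mathrm{Aut}(G)$-symmetrised tensor product of mean-zero $\phi,\psi$ over a proper orbit $O$ of pairs, handles conditions (i) (one-variable marginals of $h$ vanish, hence $(n-1)$-wise uniformity) and (ii) (consistency is automatic on proper subgraphs and reduces to $\mathrm{Aut}(G)$-invariance on $G$) cleanly and correctly.

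Condition (iii) — non-uniformity on $G$ — is where the argument stops being a proof, and it is precisely the part on which the paper's proof spends all its effort. You correctly reduce (iii) to showing $\langle K,\mathbf 1_{\sigma^{-1}(O)}\rangle\ne 0$ for some $\sigma\in S_n$ (since $\sum_P K(P)=0$, the orbit sums cannot all vanish unless $K$ is orthogonal to every translate of $\mathbf 1_O$), but the two ingredients offered to finish do not do so. First, "sufficiently non-constant" is not enough: the required condition is that $K$ is not orthogonal to the $S_n$-submodule $W_O=\operatorname{span}\{\mathbf 1_{\sigma^{-1}(O)}:\sigma\in S_n\}$ of $\R^{\binom{[n]}{2}}$, and when $O$ is (say) regular as a graph on $[n]$, $W_O$ misses the $S^{(n-1,1)}$ component, so a non-constant $K$ supported mostly on that component would still fail (iii). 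Second, linear independence of the tensor products $\phi(y_{p_1})\phi(y_{p_2})\prod_{k\notin P}\psi(y_k)$ does not help, because a fixed linear functional such as $\int_{\Delta_{\mathrm{id}}}$ need not separate linearly independent functions, let alone separate them in the module-theoretic sense required. The closing assertion — that with $\phi(x)=x-\tfrac12$, $\psi(x)=(x-\tfrac12)^2-\tfrac1{12}$ "one verifies (iii) by direct computation" — is not carried out, and it is not clear that this single choice works for all $n$ and all orbits $O$ simultaneously. By contrast, \refL{edgedist} gives the analogous difference of probabilities in closed form as a nonzero multiple of $B_{n-1}(\alpha)$ (non-regular case) or a nontrivial combination of $B_{n-1}(\alpha)$ and $B_{n-2}(\alpha)$ (regular case), so all that remains is the elementary observation that these vanish for only finitely many $\alpha$. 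To close your gap you would need a comparably explicit, graph-independent lower bound on some $|\langle K,\mathbf 1_{\sigma^{-1}(O)}\rangle|$; one plausible route is to drop the fixed polynomials and choose $\phi,\psi$ adaptively (for instance, mean-zero functions concentrating $\phi$ near $0$ and $\psi$ near $1$ pushes $K$ towards a multiple of $\mathbf 1_{\{\{1,2\}\}}$, for which the orbit sums visibly disagree since $\emptyset\ne O\subsetneq\binom{[n]}{2}$), but this would still need to be made rigorous.
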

\begin{proof}
Fix an $\alpha\in[0,1]$ and a $v_0\in V(G)$ and define a random ordering on
$G$ by giving each vertex $v\ne v_0$ an i.i.d.\ $U(0,1)$ random variable
$X_v\in[0,1]$. Pick an edge $xy$ uniformly at random from $G$ (independently
of the~$X_v$, $v\ne v_0$), and define $X_{v_0}\in[0,1]$ so that
\begin{equation}\label{e:linear}
 \sum_{v\in V(H)}\eps_vX_v\equiv\alpha\bmod 1,
\end{equation}
where $\eps_v=-1$ if $v\in\set{x,y}$ and $\eps_v=1$ otherwise. Note that for any
choice of edge $xy\in E(G)$ this is essentially equivalent to assigning
i.i.d.\ $U(0,1)$ random variables to {\em all\/} vertices and conditioning on
the event that \eqref{e:linear} holds. Hence the resulting distribution is
independent of the choice of~$v_0$, and is uniform on any subset of $n-1$
vertices. Moreover, the overall probability distribution on orderings is
obtained by averaging the distributions for each choice of edge $xy\in E(G)$,
and is therefore invariant under any automorphism of~$G$. Consistency
follows as the distribution is uniform on any proper induced subgraph.

We now show that, for suitable~$\alpha$, this ordering is not uniform on $G$
itself. Let the vertices of $G$ be $\set{1,\dots,n}$ and define
$P_{j_1,\dots,j_r}$ to be the probability that
\begin{equation}\label{e:xord}
 X_{j_1}<X_{j_2}<\dots<X_{j_r}<\min\bigl\{X_s:s\notin\set{j_1,\dots,j_r}\bigr\},
\end{equation}
i.e., that $X_{j_1},\dots, X_{j_r}$ are the smallest $r$ values of the~$X_v$,
and in that order. Define $P^{(x,y)}_{j_1,\dots,j_r}$ to be the probability
that \eqref{e:xord} holds conditioned on the chosen edge being $xy\in E(G)$.
Then
\[
 P_{j_1,\dots,j_r}=\frac{1}{|E(G)|}\sum_{xy\in E(G)}
 P^{(x,y)}_{j_1,\dots,j_r}.
\]

Assume first that $G$ is not regular and label the vertices so that the degree
$d_1$ of vertex 1 is not equal to the degree $d_2$ of vertex~2. Consider
\[
 \delta=P_{1,2}-P_{2,1}=\frac{1}{|E(G)|}\sum_{xy\in E(G)}
 \bigpar{P^{(x,y)}_{1,2}-P^{(x,y)}_{2,1}}.
\]
By symmetry, $P^{(x,y)}_{1,2}=P^{(x,y)}_{2,1}$ unless
$|\set{x,y}\cap\set{1,2}|=1$. Hence, again by symmetry,
letting $d_j'$ be the number of neighbours of $j$ in
$V(G)\setminus\set{1,2}$,
\begin{align*}
 |E(G)|\delta&=d'_1\bigpar{P^{(1,3)}_{1,2}-P^{(1,3)}_{2,1}}
 +d'_2\bigpar{P^{(2,3)}_{1,2}-P^{(2,3)}_{2,1}}\\
 &=(d'_1-d'_2)\bigpar{P^{(1,3)}_{1,2}-P^{(1,3)}_{2,1}}\\
 &=(d_1-d_2)\tfrac{(-1)^n}{(n-1)!}\tbinom{n}{2}B_{n-1}(\alpha),
\end{align*}
where the last line follows from \refL{edgedist} and $B_n(x)$ denotes the
$n$th Bernoulli polynomial. In particular $\delta\ne0$ unless $\alpha$ is one
of the zeros of the polynomial $B_{n-1}(x)$.

Now assume $G$ is regular with vertex degree~$d$. As $G$ is not homogeneous, $n\ge4$
and we can order the vertices so that $\set{1,3}\in E(G)$ but
$\set{2,3}\notin E(G)$. Consider
\[
 \delta'=P_{1,2,3}-P_{2,1,3}=\frac{1}{|E(G)|}\sum_{xy\in E(G)}
 \bigpar{P^{(x,y)}_{1,2,3}-P^{(x,y)}_{2,1,3}}.
\]
Once again by symmetry, $P^{(x,y)}_{1,2,3}=P^{(x,y)}_{2,1,3}$ unless
$|\set{x,y}\cap\set{1,2}|=1$. Hence, again by symmetry,
\begin{align*}
 |E(G)|\delta'&=\bigpar{P^{(1,3)}_{1,2,3}-P^{(1,3)}_{2,1,3}}
 +(d-1)\bigpar{P^{(1,4)}_{1,2,3}-P^{(1,4)}_{2,1,3}}
 +d\bigpar{P^{(2,4)}_{1,2,3}-P^{(2,4)}_{2,1,3}}\\
 &=\bigpar{P^{(1,3)}_{1,2,3}-P^{(1,3)}_{2,1,3}}
 -\bigpar{P^{(1,4)}_{1,2,3}-P^{(1,4)}_{2,1,3}}.
\end{align*}
Now
\[
 P^{(1,3)}_{1,2}=\sum_{i>2}P^{(1,3)}_{1,2,i}
 =P^{(1,3)}_{1,2,3}+(n-3)P^{(1,4)}_{1,2,3},
\]
and similarly for $P^{(1,3)}_{2,1}$. Hence by \refL{edgedist}
(noting that $n\ge4$)
\begin{align*}
 |E(G)|\delta'&=\bigpar{P^{(1,3)}_{1,2}-P^{(1,3)}_{2,1}}
 -(n-2)\bigpar{P^{(1,4)}_{1,2,3}-P^{(1,4)}_{2,1,3}}\\
 &=\tfrac{(-1)^n}{(n-1)!}
 \bigpar{\tbinom{n}{2}-(n-2)(n-3+2H_{n-3})}B_{n-1}(\alpha)
 -\tfrac{(-1)^n}{(n-3)!}B_{n-2}(\alpha),
\end{align*}
where $H_n=1+\frac{1}{2}+\dots+\frac{1}{n}$.
As $B_{n-1}(\alpha)$ and $B_{n-2}(\alpha)$ are linearly independent, this is
non-zero for all but a finite number of $\alpha\in[0,1]$.

Thus in all cases the distribution is non-uniform on $V(G)$ for suitable~$\alpha$.
\end{proof}

\begin{theorem}\label{t:disjoint}
 Suppose\/ $\cP$ is a hereditary property and\/ $H$ is a graph on at least\/
 $2$ vertices such that for every\/ $G\in\cP$, all induced subgraphs of\/ $G$
 isomorphic to\/ $H$ are vertex disjoint. Then there is a consistent random
 ordering model on\/ $\cP$ that is uniform on all graphs\/ $G\in\cP$ without
 an induced subgraph isomorphic to\/ $H$, and is non-uniform on all
 non-homogeneous graphs\/ $G\in\cP$ containing\/ $H$ as an induced subgraph.
\end{theorem}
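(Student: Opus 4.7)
The plan is to define the ordering for each $G\in\cP$ by perturbing an i.i.d.\ uniform background, one $H$-copy at a time. Fix a probability measure $\mu_H$ on $[0,1]^{V(H)}$ satisfying (i) invariance under $\mathrm{Aut}(H)$; (ii) each projection onto $|V(H)|-1$ coordinates is i.i.d.\ uniform on $[0,1]^{|V(H)|-1}$; and (iii) the ordering of $V(H)\cup\{w\}$, with $w$ given an independent $U(0,1)$ value, is non-uniform. For each $G\in\cP$, I assign $X_v\sim U(0,1)$ i.i.d.\ to vertices outside every induced $H$-copy, and for each induced copy $H'$ (pairwise disjoint by hypothesis) I independently sample $(X_v)_{v\in V(H')}$ from the pushforward of $\mu_H$ along any isomorphism $V(H)\to V(H')$ (well-defined by (i)); the ordering is by the resulting $X_v$.

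Consistency reduces to (ii). Given $G_1\subseteq G_2\in\cP$, the $H$-copies of $G_1$ are precisely the $H$-copies of $G_2$ lying wholly inside $V(G_1)$; a copy of $G_2$ meeting $V(G_1)$ in fewer than $|V(H)|$ vertices contributes, by (ii), an i.i.d.\ uniform joint distribution on those vertices, indistinguishable from the $G_1$-construction in which they lie outside every copy. Hence $\Prb_{G_2}$ restricted to $V(G_1)$ equals $\Prb_{G_1}$. If $G$ contains no induced $H$, all $X_v$ are i.i.d.\ uniform and the ordering is uniform. If $G$ is non-homogeneous and contains an induced copy $H'$: when $H$ itself is non-homogeneous, the ordering on $V(H')$ is already non-uniform; when $H$ is homogeneous, non-homogeneity of $G$ forces some $w\in V(G)\setminus V(H')$, and (iii) applied to $V(H')\cup\{w\}$ (noting that $X_w$ is marginally uniform and independent of $(X_v)_{v\in V(H')}$, whether $w$ lies in another copy or not) gives non-uniformity.

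Producing $\mu_H$ splits by cases. For non-homogeneous $H$, \refL{almostuniform} applied to $H$ with a non-exceptional parameter $\alpha$ produces $\mu_H$ satisfying (i) and (ii) and making the ordering on $V(H)$ already non-uniform, which is stronger than (iii). For homogeneous $H$, where $\mathrm{Aut}(H)=S_{|V(H)|}$ forces $\mu_H$ to be exchangeable, I take $\mu_H$ uniform on $\{Y\in[0,1]^{V(H)}:\sum_v Y_v\equiv\alpha\pmod 1\}$: (i) holds by exchangeability, and (ii) holds because fixing any $|V(H)|-1$ coordinates pins down the remaining coordinate modulo $1$, giving a measure-preserving bijection onto $[0,1]^{|V(H)|-1}$.

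The main obstacle is (iii) for homogeneous $H$. Exchangeability together with independence of $w$ decomposes the ordering on $V(H)\cup\{w\}$ into a uniform ordering of $V(H)$ and the rank of $w$ among the $Y_v$, so non-uniformity of the full ordering is equivalent to the expected spacings $\E[Y_{(k+1)}-Y_{(k)}]$ (with $Y_{(0)}=0$, $Y_{(|V(H)|+1)}=1$) not all equalling $1/(|V(H)|+1)$. A sheet-by-sheet computation over the affine components $\{\sum_v Y_v=j+\alpha\}$ of the support shows that the spacings are piecewise-polynomial functions of $\alpha$ which are not identically $1/(|V(H)|+1)$; explicit verification (for instance $\alpha=0$ when $|V(H)|=2$, or $\alpha=1/3$ when $|V(H)|=3$) produces suitable choices, and only finitely many exceptional values of $\alpha$ need be avoided.
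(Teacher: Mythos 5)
Your construction is essentially the same as the paper's: an i.i.d.\ $U(0,1)$ background perturbed independently on each (necessarily pairwise disjoint) induced copy of~$H$, with a case split on whether $H$ is homogeneous, using \refL{almostuniform} in the non-homogeneous case and the conditional law $\sum_v Y_v\equiv\alpha\pmod 1$ in the homogeneous case. Your abstraction into properties (i)--(iii) of~$\mu_H$, and the observation that consistency under restriction to induced subgraphs reduces to the $(|V(H)|-1)$-dimensional marginals being i.i.d.\ uniform, matches the paper's argument cleanly, including the step where a vertex~$w$ outside $H'$ is still marginally uniform and independent of $(X_v)_{v\in V(H')}$ even if $w$ belongs to a different copy of~$H$.

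The one genuine gap is property~(iii) for homogeneous~$H$. You assert that the expected spacings $\E[Y_{(k+1)}-Y_{(k)}]$ are piecewise-polynomial in~$\alpha$ and not identically $1/(|V(H)|+1)$, and verify this only for $|V(H)|\in\{2,3\}$; that leaves the claim unproved for general~$|V(H)|$, and the phrase ``only finitely many exceptional values of~$\alpha$'' is an assertion, not a derivation. The paper closes exactly this hole with \refL{addx}: a Fourier computation giving
\[
 \Prb(X_w<X_1<\dots<X_n)=\tfrac{1}{(n+1)!}+\tfrac{(-1)^{n-1}}{n!^2}B_n(\alpha),
\]
where $B_n$ is the $n$th Bernoulli polynomial, so that non-uniformity fails only at the finitely many zeros of~$B_n$. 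You should either cite \refL{addx} or supply an equivalent explicit computation; without it, (iii) is not established for arbitrary~$|V(H)|$.

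Two further small points worth tightening. First, in the non-homogeneous case you need the parameter~$\alpha$ in \refL{almostuniform} to be chosen so that the resulting order on~$H$ is actually non-uniform; the lemma only guarantees this for ``suitable''~$\alpha$, so you should fix such a value. Second, when $H$ is homogeneous and $G$ is non-homogeneous and contains~$H'$, you correctly note $V(G)\setminus V(H')\neq\emptyset$; it is worth saying explicitly (as the paper does parenthetically) that if $V(G)=V(H')$ then $G\cong H$ is homogeneous, so there is nothing to prove in that case.
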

Note that $H$ itself may be either homogeneous or non-homogeneous.
\begin{proof}
Assume first that $H$ is homogeneous. Fix $\alpha\in[0,1]$ and define the
following random order for each $G\in\cP$. Each vertex $v\in V(G)$ is assigned
an i.i.d.\ $U(0,1)$ random variable~$X_v$, except that if $G$ contains induced
subgraphs $H_1,\dots,H_k$ isomorphic to~$H$, a fixed vertex $v_i$ is chosen
from each~$V(H_i)$, and $X_{v_i}\in[0,1]$ is redefined so that
\begin{equation}\label{e:sumalpha}
 \sum_{v\in V(H_i)}X_v\equiv\alpha\bmod1.
\end{equation}
This is essentially equivalent conditioning on the event that
\eqref{e:sumalpha} occurs for each~$i$. The ordering on $G$ is then obtained
from the ordering of the $X_v$ in~$\R$. Note that the joint distribution of
the~$X_v$, $v\in V(G)$, and hence the distribution on the ordering, is
independent of the choices of the~$v_i$, and hence is symmetric under all
permutations of~$V(H_i)$. Let $G'$ be an induced subgraph of $G$ and assume
$G'$ contains $H_i$ only for $i\in S\subseteq\set{1,\dots,k}$. By independence
of the choice of $v_i$ we may assume $v_i\notin V(G')$ for $i\notin S$. Hence
the induced ordering on $G'$ is given by exactly the same model. By
independence on the~$v_i$, the distribution is clearly invariant under
automorphisms of~$G'$, so the random ordering model described is consistent
on~$\cP$. It is also clearly uniform on any $G\in\cP$ that does not contain
$H$ as an induced subgraph. It remains to show that if $G\in\cP$ does contain
$H$ as a proper induced subgraph then the ordering on $G$ is non-uniform.
(Note that in this case $G$ is necessarily non-homogeneous as otherwise it
would contain non vertex-disjoint copies of~$H$.) Let
$v\in V(G)\setminus V(H)$ and assume $V(H)=\set{1,\dots,n}$. Then by
\refL{addx},
\[
 \Prb(X_v<X_1<\dots<X_n)=\tfrac{1}{(n+1)!}+\tfrac{(-1)^{n-1}}{n!^2}B_n(\alpha).
\]
Hence, for all but a finite number of choices of~$\alpha$, this probability is
not $1/(n+1)!$ as it would be in the case of the uniform distribution. Thus
the distribution is not uniform on $G$ for a suitable choice of~$\alpha$.

Assume now that $H$ itself is not homogeneous. Fix a non-uniform distribution
on $H$ as given by \refL{almostuniform}. Fix $G\in\cP$ and suppose $G$
contains (vertex-disjoint) copies $H_1,\dots,H_k$ of~$H$. Define a random
ordering on the vertices of $G\in\cP$ by giving each vertex $v\in V(G)$ an
independent uniform random variable $X_v\in[0,1]$, except that on each $H_i$
we apply the construction of \refL{almostuniform}, independently for
each~$H_i$. In other words, we fix a choice of vertex $v_i\in V(H_i)$ and
then uniformly and independently choose one edge from each~$H_i$. The random
variable $X_{v_i}$ is then redefined so that \eqref{e:linear} holds on
each~$H_i$. Once again, if $G'$ is an induced subgraph of $G$ containing only
the copies~$H_i$, $i\in S\subseteq\set{1,\dots,k}$, then we can without loss of
generality assume that $v_i\notin V(G')$ for each $i\notin S$. Then the
induced ordering on $G'$ is given by exactly the same model. Hence the
ordering model on $\cP$ is consistent and has the stated properties.
\end{proof}

\begin{remark}\label{r:disjoint}
We note that it is important in \refT{disjoint} that the copies of $H$ be
{\em vertex\/} disjoint. For example, taking $H$ as a single edge and $\cP$ as
any of the uniform properties mentioned above gives examples with each copy of
$H$ being {\em edge}-disjoint but the conclusion of \refT{disjoint} failing.
Another instructive example is given in \refE{2flowers} below, where the
copies of $H$ intersect in at most one vertex and each copy has ``private''
vertices not included in any other copy of~$H$. Nevertheless $\cP$ is
still uniform.
\end{remark}

Despite \refR{disjoint}, a construction similar to that in \refT{disjoint} is
occasionally possible even when not all copies of $H$ are vertex disjoint. The
following gives an example.

\begin{example}\label{x:doublebroom}
Let $n\ge3$ and define $G$ to be the infinite {\em double broom\/} consisting
of a path $P_n$ on $n$ vertices with an infinite number of leaves added to
the end-vertices of~$P_n$ (so that the longest path in $G$ is $P_{n+2}$).
Let the vertices of the central path be $u_1,\dots,u_n$.
Assign i.i.d.\ $U(0,1)$ random variables $X_v$ to all $v\in V(G)$ except
that $X_{u_n}\in[0,1]$ is redefined so that
$\sum_{i=1}^n X_{u_i}\equiv\alpha\bmod1$, where $\alpha\in[0,1]$ is a zero of
the Bernoulli polynomial~$B_n(x)$. Any induced subgraph of $G$ that does not
contain all vertices of the central path $P_n$ receives a uniform ordering, as
does $P_n$ itself (by symmetry). The only remaining induced subgraphs are
$P_{n+1}$, single brooms containing $P_n$ and at least two leaves attached at
one end-vertex, and double brooms with one or more leaves at each end.
Any pair of such single brooms or double brooms are isomorphic only by an
isomorphism which either fixes $P_n$ or reverses its direction, and hence
receive the same distribution of orderings. Any copy of $P_{n+1}$ consists of
the central $P_n$ with one leaf at either end. Such a graph has the uniform
random ordering by \refL{addx} as $B_n(\alpha)=0$. (The $X_{u_i}$ are
exchangeable, so it is enough to check the distribution of the rank of the
leaf $v$ in the ordering of $v,u_1,\dots,u_n$.) Thus the ordering is
consistent. On the other hand, $B_{n+1}(\alpha)\ne0$ by \refL{Bzeros},
so the second formula in \refL{addx} implies that the random ordering is
{\em not\/} uniform on any $P_{n+2}$ subgraph.
\end{example}

\begin{remark}
Note that the ordering in \refE{doublebroom} is not consistent for $n=2$
(the infinite double star) as the single brooms obtained by adding leaves to
one end-vertex of a $P_2$ are in fact stars, and have many automorphisms
which do not preserve the distribution of the given random order. This is to
be expected as the infinite double star is in fact uniform by \refT{leaves}.
Moreover, the class of all induced subgraphs of double brooms with central
path of length $\le n$ is also uniform by \refT{leaves}. \refE{doublebroom}
demonstrates that \refT{leaves} does not however apply when the central path
length is required to be {\em exactly\/}~$n$. Indeed, the single broom
subgraphs of the double brooms do not satisfy the conditions of \refT{leaves}.
\end{remark}

\section{Templates and infinite blow-ups}\label{s:basic}

Consider a (finite) {\em template\/}~$G$, i.e., a graph with a set $\cV$ of
vertices, each vertex labelled as either {\em full\/} or {\em empty}. Define
the infinite blow-up $\Goo$ of $G$ as an infinite graph with vertex set
$\bigcup_{v\in\cV}\cW_v$ where $\cW_v=\set{v_i}\ioo$, such that $\cW_v$ induces
an empty or complete graph according to whether $v$ is empty or full
respectively, and for any distinct $v,w\in \cV$ and all $i,j\ge1$,
$v_iw_j$ is an edge in $\Goo$ if and only if $vw$ is an edge in~$G$.
Define the hereditary property $\cP_G$ as the set of all finite
induced subgraphs of~$\Goo$, i.e., $\cP_G=\cP_{\Goo}$. We shall call a
template $G$ {\em uniform\/} if $\Goo$ (or equivalently~$\cP_G$) is uniform,
i.e., if the only consistent random ordering is the uniform one. Our aim is to
prove that most templates are uniform. This is, however, not always the case.

\begin{example}\label{x:UK}
 Suppose that the template has no edges and at least two vertices with at
 least one of the vertices full. Thus $\Goo$ is a disjoint union of some
 infinite cliques and (perhaps) some infinite empty graphs, and thus a
 disjoint union of at least two cliques (infinite or singletons). Any induced
 subgraph is thus also a disjoint union of cliques. We can construct a
 non-uniform consistent order as in \refE{unionKn} by first taking a uniform
 random order of the cliques, and then a uniform random order of the vertices
 within each clique.
\end{example}

Consider first each `blob' $\cW_v$ separately. Fix $v\in\cV$ and
$v_i\in\cW_v$. Since any permutation of $\cW_v$ is an automorphism of~$\Goo$,
and thus preserves the distribution of the order, the random variables
$\set{\id{v_i>v_k}}_{k\ne i}$ are exchangeable. Thus, by de Finetti's theorem,
see e.g.,~\cite[Theorem 1.1 and Proposition 1.4]{Kallenberg:symm},
a.s.\ there exists a limit
\begin{equation}\label{e:U}
 U_{v_i}:=\lim_\ntoo\frac1n\sum_{k=1}^n\id{v_i>v_k}.
\end{equation}
Thus each $U_{v_i}$ is a random variable with $U_{v_i}\in[0,1]$. Moreover, if
$v_i<v_j$, then $\id{v_i>v_k}\le\id{v_j>v_k}$ for every~$k$, and thus
$U_{v_i}\le U_{v_j}$.

\begin{lemma}\label{l:Uiid}
 For each\/~$v$, $\set{U_{v_i}}\ioo$ is a sequence of i.i.d.\ uniformly
 distributed random variables;\/ $U_{v_i}\sim U(0,1)$.
\end{lemma}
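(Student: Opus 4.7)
The plan is to build on the observation already made that any permutation of $\cW_v$ is an automorphism of~$\Goo$, so the distribution of the random order on $\Goo$ is invariant under such permutations. Consequently the random permutation $\rho_n$ of $\set{1,\dots,n}$ that records the relative ranks of $v_1,\dots,v_n$ among themselves is uniformly distributed on the symmetric group~$S_n$. Rewriting \eqref{e:U} as
\[
 \frac{\rho_n(i)-1}{n}=\frac1n\sum_{k=1}^n\id{v_i>v_k}\longrightarrow U_{v_i}\quad\text{a.s.,}
\]
identifies $U_{v_i}$ as the almost sure limit of normalised ranks drawn from a uniform random permutation in~$S_n$.

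The second step is to compute the joint limiting law of $\bigpar{\rho_n(1)/n,\dots,\rho_n(m)/n}$ for an arbitrary fixed~$m$. For this I would couple: let $Y_1,Y_2,\dots$ be i.i.d.\ $U(0,1)$ and let $\tilde\rho_n(i)$ be the rank of $Y_i$ among $Y_1,\dots,Y_n$. Then $\tilde\rho_n$ is uniform on~$S_n$, so has the same distribution as~$\rho_n$, and the Glivenko--Cantelli theorem applied to the empirical distribution function of $Y_1,\dots,Y_n$ yields $\tilde\rho_n(i)/n\to Y_i$ a.s.\ for every fixed~$i$. Hence $\bigpar{\tilde\rho_n(1)/n,\dots,\tilde\rho_n(m)/n}\to(Y_1,\dots,Y_m)$ almost surely, and transferring via equality in distribution at each~$n$, $\bigpar{\rho_n(1)/n,\dots,\rho_n(m)/n}$ converges in distribution to the i.i.d.\ uniform vector $(Y_1,\dots,Y_m)$.

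Combining this weak convergence with the almost sure convergence of the same vector to $(U_{v_1},\dots,U_{v_m})$ forces $(U_{v_1},\dots,U_{v_m})$ to have the i.i.d.\ $U(0,1)^m$ distribution. Since $m$ was arbitrary, the whole sequence $\set{U_{v_i}}\ioo$ is i.i.d.\ $U(0,1)$, as required. No serious obstacle is anticipated: the argument reduces to the well-known fact that the normalised ranks of finitely many fixed indices in a uniform random permutation of $[n]$ jointly converge in distribution to i.i.d.\ $U(0,1)$ variables, and everything else is bookkeeping.
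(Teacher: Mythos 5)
Your proposal is correct and uses essentially the same approach as the paper: exchangeability of the order restricted to $\cW_v$ (since any permutation of $\cW_v$ is an automorphism of~$\Goo$) forces that restriction to be the uniform random order, which can be represented by i.i.d.\ $U(0,1)$ variables, after which a law-of-large-numbers argument identifies the limits. The paper states this more tersely—``we may assume the order on $\cW_v$ is given by i.i.d.\ uniforms $X_{v_i}$'' and then concludes $U_{v_i}=X_{v_i}$ a.s.\ by the strong law—whereas you make the identification via an explicit coupling and equality in distribution at each finite stage; the bookkeeping differs but the mathematical content is the same.
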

\begin{proof}
The order restricted to $\cW_v$ has a distribution invariant under all
permutations, and thus it is the uniform random order. We may thus assume that
the random order on $\cW_v$ is defined by a collection of i.i.d.\ uniform
random variables $X_{v_i}$ as in \refE{uniform}. But then \eqref{e:U} and
the law of large numbers a.s.\ yield
\begin{equation}
 U_{v_i}=\lim_\ntoo\frac1n\sum_{k=1}^n\id{X_{v_i}>X_{v_k}}=X_{v_i}.
\end{equation}
\end{proof}

Moreover, this extends to all blobs, jointly.

\begin{lemma}\label{l:UUiid}
 The random variables\/ $U_{v_i}$, $v\in\cV$ and\/ $i\ge1$, are i.i.d.\ and
 uniform on\/ $[0,1]$.
\end{lemma}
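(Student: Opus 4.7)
The plan is to compute all joint moments
$\E\bigl[\prod_\beta U_{u_\beta}^{a_\beta}\bigr]$
for finitely many distinct vertices $u_1,\dots,u_r$ with $u_\beta\in\cW_{v^{(\beta)}}$ and nonnegative integers $a_\beta$, and show that these equal $\prod_\beta 1/(a_\beta+1)$, the corresponding mixed moments of independent $U(0,1)$ variables. Since the $U_{u_\beta}$ are $[0,1]$-valued, joint moments determine the joint law, which gives the lemma. (If several targets lie in the same blob, \refL{Uiid} already handles those coordinates, but the computation below subsumes that case.)

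Substituting \eqref{e:U} and interchanging expectation with the bounded limit yields
\begin{equation*}
 \E\Bigl[\prod_\beta U_{u_\beta}^{a_\beta}\Bigr]
 =\lim_{n\to\infty}\frac{1}{n^{\sum_\beta a_\beta}}
 \sum \Prb\Bigl(u_\beta > v^{(\beta)}_{k_j^{(\beta)}} \text{ for all } \beta,\,j\Bigr),
\end{equation*}
where the outer sum runs over all families of indices $k_j^{(\beta)}\in\set{1,\dots,n}$, $1\le j\le a_\beta$. Tuples in which some auxiliary index collides with another auxiliary or a target index within the same blob contribute $O(n^{-1})$ in aggregate and may be discarded; on the remaining ``generic'' tuples the probability does not depend on the particular choice by blob-wise exchangeability. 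Setting $B_\beta := \set{u_\beta}\cup\set{v^{(\beta)}_{k_j^{(\beta)}}:1\le j\le a_\beta}$, the $B_\beta$ are pairwise disjoint of sizes $a_\beta+1$, and the moment identity reduces to
\begin{equation*}
 \Prb\Bigl(\bigcap_\beta \set{u_\beta = \max B_\beta}\Bigr)
 = \prod_\beta \frac{1}{a_\beta+1}.
\end{equation*}

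To establish this factorization, set $A := \bigcup_\beta B_\beta$ and split $A = \bigsqcup_v A_v$ with $A_v := A\cap\cW_v$. I describe the induced order on $A$ by its \emph{interleaving pattern} $\tau$, recording which blob occupies each of the $|A|$ rank positions, together with the restricted orders $T_v$ on each $A_v$. Every element of the product group $\prod_v S_{A_v}$ extends to an automorphism of $\Goo$, so consistency makes the distribution of the order on $A$ invariant under this group. This action fixes $\tau$ and permutes only one coordinate of $(T_v)_v$ at a time, so averaging over each factor $S_{A_v}$ shows that, conditionally on $\tau$, the orders $T_v$ are independent with $T_v$ uniform on orderings of $A_v$.

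The event $E_\beta := \set{u_\beta = \max B_\beta}$ depends only on $T_{v^{(\beta)}}$. Events in distinct blobs are therefore conditionally independent given $\tau$, while events within a common blob $v$ correspond (realising the uniform order on $A_v$ via i.i.d.\ uniforms on $A_v$) to argmax-events on the disjoint subsets $B_\beta\subseteq A_v$, so they are also independent, each of probability $1/(a_\beta+1)$. Multiplying yields the required factorization, and hence the joint moment identity. The main (and essentially only) obstacle is this conditional-independence step via the interleaving decomposition; everything else is bookkeeping.
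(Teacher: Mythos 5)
Your proof is correct, but it takes a more computational route than the paper. The paper works directly with the induced orders on the blobs: for finite $A_v\subseteq\cW_v$, the order on $A_v$ is uniform even conditionally on the orders on the other blobs $(A_w)_{w\ne v}$ --- any $\sigma\in S_{A_v}$ extends to an automorphism of $\Goo$ fixing every $A_w$, $w\ne v$, so the conditioning event is $\sigma$-invariant --- and since this holds for every $v$, the blob orders are mutually independent and uniform, from which the claim about the $U_{v_i}$ follows with \refL{Uiid}. You instead compute all joint moments of the $U$'s and reduce to the single combinatorial identity $\Prb\bigl(\bigcap_\beta\set{u_\beta=\max B_\beta}\bigr)=\prod_\beta (a_\beta+1)^{-1}$, which you prove by conditioning on the interleaving pattern $\tau$ rather than on the other blobs' orders. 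Both arguments rest on the same invariance (permutations within each $\cW_v$ are automorphisms of $\Goo$), and your Haar-measure argument for the conditional law of $(T_v)_v$ given $\tau$ (free, coordinatewise action of $\prod_v S_{A_v}$ that fixes $\tau$, hence conditional Haar, hence product) is a carefully spelled-out version of what the paper states more tersely. The moment machinery adds bookkeeping --- the collision estimate, the passage from moments to the joint law --- that the direct conditioning argument avoids, but it has the virtue of making the reduction to a finite combinatorial probability entirely explicit, and it re-derives \refL{Uiid} en route rather than quoting it.
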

\begin{proof}
Consider a finite subset $A_v$ of each~$\cW_v$. Any permutation of $A_v$ is
an automorphism of~$\Goo$, and thus the induced order on $A_v$ is the uniform
random order, and this also holds even if we condition on the induced orders
on all $A_w$, $w\ne v$. Hence the induced orders on the subsets $A_v$ are
independent (and uniform). Since the sets $A_v$ are arbitrary finite subsets
of the~$\cW_v$, this means that the induced orders on the sets $\cW_v$,
$v\in\cV$, are independent, and thus the families $\set{U_{v_i}}\ioo$,
$v\in\cV$, are independent.
\end{proof}

Next, take two  vertices $v,u\in\cV$ and compare vertices in the two blobs
$\cW_v$ and~$\cW_u$. For every $v_i\in\cW_v$, we see in analogy
with~\eqref{e:U}, again by de Finetti's theorem, that a.s.\ the limit
\begin{equation}\label{e:V}
 V_{u,v_i}:=\lim_\ntoo\frac1n\sum_{k=1}^n\id{v_i>u_k}
\end{equation}
exists. Note that $V_{v,v_i}=U_{v_i}$. Each $V_{u,v_i}$ is a random variable
with values in $[0,1]$ and gives the `rank' of vertex $v_i$ with respect
to~$\cW_u$, i.e., the proportion of vertices in $\cW_u$ that it exceeds. Note
that these random variables are in general neither independent nor uniform.

\begin{example}\label{x:K+K}
 Let the template consist of two full vertices and no edge; thus
 $\cV=\set{1,2}$ and $\Goo$ consists of two disjoint infinite cliques.
 For the random order described in \refE{UK}, we have
 $V_{1,2_i}=V_{1,2_j}\in\set{0,1}$ for all $i,j\ge1$, and
 $V_{1,2_i}\sim\Be(1/2)$.
\end{example}

\begin{lemma}\label{l:F}
 For each pair\/ $u,v\in\cV$, there exists a random distribution function\/
 $F_{u,v}$ on\/ $[0,1]$ such that, a.s., for every\/ $x\in[0,1]$,
 \begin{equation}\label{e:F}
  F_{u,v}(x)=\lim_\ntoo\frac1n\sum_{i=1}^n\id{V_{u,v_i}\le x}.
 \end{equation}
 Furthermore, conditioned on\/ $F_{u,v}$, the random variables\/ $V_{u,v_i}$,
 $i\ge1$, are i.i.d.\ with cumulative distribution function\/~$F_{u,v}$.
\end{lemma}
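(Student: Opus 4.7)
The plan is to apply de Finetti's theorem to the sequence $(V_{u,v_i})_{i=1}^\infty$ for each fixed pair $u,v\in\cV$, in direct analogy with the argument for \refL{Uiid}. The key step is to verify exchangeability in the index~$i$; the representation of $F_{u,v}$ and the limit formula then follow from standard tools.

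For exchangeability, I would argue as follows. For any finite permutation $\sigma$ of $\N$, the map sending $v_i\mapsto v_{\sigma(i)}$ and fixing every other vertex of $\Goo$ is a graph automorphism: $\cW_v$ is homogeneous, and adjacency between $v_i$ and any vertex outside $\cW_v$ depends only on the template edge between $v$ and the corresponding blob. Consistency therefore guarantees that the random order is invariant in distribution under this automorphism, and applying the automorphism to the defining formula~\eqref{e:V} shows that $(V_{u,v_i})_{i\ge1}$ and $(V_{u,v_{\sigma(i)}})_{i\ge1}$ have the same joint distribution.

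Given exchangeability, de Finetti's theorem \cite[Theorem 1.1 and Proposition 1.4]{Kallenberg:symm} supplies a random probability measure $\mu_{u,v}$ on $[0,1]$ such that, conditional on $\mu_{u,v}$, the $V_{u,v_i}$ are i.i.d.\ with law $\mu_{u,v}$. Setting $F_{u,v}(x):=\mu_{u,v}([0,x])$ produces the required random distribution function and gives the second conclusion immediately. For the almost sure limit in~\eqref{e:F} with a single null set valid for all $x\in[0,1]$, I would invoke the Glivenko--Cantelli theorem conditionally on $F_{u,v}$: the empirical CDFs of the conditionally i.i.d.\ $V_{u,v_i}$ converge uniformly to $F_{u,v}$ a.s., and removing the conditioning preserves this. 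The only delicate point is the automorphism argument for exchangeability, which relies on the homogeneity of each blob built into the definition of~$\Goo$; as a sanity check, when $u=v$ the lemma reduces to \refL{Uiid} and $F_{v,v}(x)=x$ is deterministic.
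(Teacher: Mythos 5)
Your proposal is correct and follows essentially the same approach as the paper: observe that permutations of $\cW_v$ are automorphisms of $\Goo$, deduce exchangeability of $(V_{u,v_i})_{i\ge1}$ from consistency and \eqref{e:V}, and apply de Finetti's theorem. The paper leaves the ``for every $x$'' uniformity of the null set implicit; your Glivenko--Cantelli remark fills that in explicitly, and your sanity check for $u=v$ matches the paper's Remark~\ref{r:u=v}.
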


\begin{remark}\label{r:u=v}
When $v=u$, this holds by \refL{UUiid} with $F_{u,u}(x)=x$ a.s., so
$F_{u,u}$ is non-random.
\end{remark}

\begin{proof}
We may assume that $v\ne u$. Since any permutation of $\cW_v$ is an
automorphism of~$\Goo$, it follows from \eqref{e:V} that the random variables
$\set{V_{u,v_i}}\ioo$ are exchangeable. The result follows from another application
of de Finetti's theorem.
\end{proof}

It follows immediately from the definition \eqref{e:V} that, for any
$v,w\in\cV$ and $i,j\ge1$,
\begin{equation}\label{e:vw}
 v_i<w_j\implies V_{u,v_i}\le V_{u,w_j}.
\end{equation}
Equivalently, interchanging $v_i$ and~$w_j$,
\begin{equation}\label{e:vw2}
 V_{u,v_i}<V_{u,w_j}\implies v_i<w_j.
\end{equation}

\begin{remark}\label{r:order}
The order is thus described by the variables~$V_{u,v_i}$, for any fixed
$u\in\cV$, in the case when these random variables are a.s.\ distinct.
(This is not necessarily the case, as is seen in \refE{K+K}; in that example
the variables $V_{1,2_i}$ do not identify the order on~$\cW_2$. See also
\refR{Venough} below.)
\end{remark}

Let $\cFN$ be the $\sigma$-field generated by all events $v_i<w_j$ for
$v,w\in\cV$ and $i,j>N$, and let $\cFoo:=\bigcap_{N=0}^\infty\cFN$ be the
tail $\sigma$-field.

\begin{lemma}\label{l:Ftail}
 Each\/ $F_{u,v}$ is $\cFoo$-measurable.
\end{lemma}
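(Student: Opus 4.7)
The plan is to exploit the fact that the Ces\`aro-type limits defining both $V_{u,v_i}$ in~\eqref{e:V} and $F_{u,v}(x)$ in~\eqref{e:F} are insensitive to removing any finite initial segment of the average. This will show that $F_{u,v}$ depends only on comparisons between vertices with arbitrarily large indices.

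First I would observe that for any fixed $i\ge1$ and any $M\ge 0$, almost surely
\[
 V_{u,v_i}=\lim_\ntoo\frac1n\sum_{k=M+1}^{M+n}\id{v_i>u_k},
\]
because truncating the first $M$ terms of the average in~\eqref{e:V} and reindexing does not change the limit. Consequently $V_{u,v_i}$ is measurable with respect to the $\sigma$-field generated by the events $\set{v_i>u_k}$ with $k>M$, for every $M$. Now suppose $i>N$ and choose $M\ge N$. Then every event $\set{v_i>u_k}$ entering this description involves two indices both exceeding $N$, hence lies in $\cFN$. Thus $V_{u,v_i}$ is $\cFN$-measurable whenever $i>N$.

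Next I would apply the same invariance to~\eqref{e:F}: almost surely, for every fixed $x\in[0,1]$,
\[
 F_{u,v}(x)=\lim_\ntoo\frac1n\sum_{i=N+1}^{N+n}\id{V_{u,v_i}\le x}.
\]
Each summand now involves only $V_{u,v_i}$ with $i>N$, which by the previous step is $\cFN$-measurable, so the limit is $\cFN$-measurable for every $x$. Taking $x$ over the rationals in $[0,1]$ and using that $F_{u,v}$ is right-continuous and non-decreasing, we conclude that $F_{u,v}$ is $\cFN$-measurable. Since this holds for every $N$, it is measurable with respect to $\cFoo=\bigcap_N\cFN$. (The case $v=u$ is trivial by \refR{u=v}.)

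I do not anticipate any serious obstacle: the whole argument reduces to the elementary observation that a Ces\`aro average is determined by any cofinite subfamily of its terms, combined with the bookkeeping observation that once both indices in a comparison $v_i>u_k$ exceed $N$, the event lies in $\cFN$. The only mild care required is to absorb the countably many a.s.-exceptional sets (one for each pair $(N,x)$ with $x$ rational), which is routine.
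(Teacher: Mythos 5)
Your argument is correct and is essentially the same as the paper's one-line proof: both rest on the observation that the Ces\`aro limits in \eqref{e:V} and \eqref{e:F} are unchanged by discarding finitely many initial terms, so $V_{u,v_i}$ for $i>N$ and hence $F_{u,v}$ are $\cFN$-measurable for every $N$. You simply spell out the index bookkeeping that the paper leaves implicit.
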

\begin{proof}
As the limits \eqref{e:V} and \eqref{e:F} do not depend on the first $N$ terms
in the sums, $V_{u,v_i}$, $i>N$, and hence $F_{u,v}$ are $\cFN$-measurable
for all~$N$.
\end{proof}

\begin{lemma}\label{l:UFindep}
 The i.i.d.\ uniform random variables $U_{v_i}$, $v\in\cV$ and\/ $i\ge1$,
 are\/ $($jointly$)$ independent of\/~$\cFoo$. Thus the two families\/
 $\set{U_{v_i}}_{v,i}$ and\/ $\set{F_{u,v}}_{u,v\in\cV}$ are independent.
\end{lemma}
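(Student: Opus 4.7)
The plan is to establish a slightly stronger statement: for each fixed $v\in\cV$ the sequence $\{U_{v_i}\}_{i\ge1}$ is independent of the enlarged $\sigma$-field $\mathcal G_v:=\cFoo\vee\sigma\bigpar{\{U_{w_j}:w\in\cV\setminus\set{v},\ j\ge1\}}$. From this, the joint independence of $\{U_{v_i}\}_{v,i}$ from $\cFoo$ follows by a short induction that peels off, one at a time, the finitely many blobs touched by any finite subfamily of the $U$-variables; and the last sentence of the lemma is then immediate from \refL{Ftail}, since each $F_{u,v}$ is $\cFoo$-measurable and hence $\sigma(\{F_{u,v}\})\subseteq\cFoo$ is independent of $\{U_{v_i}\}_{v,i}$.

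For the key step I would use conditional exchangeability together with the conditional form of de~Finetti's theorem. For any finite-support permutation $\pi$ of $\N$, let $\sigma_\pi$ be the automorphism of $\Goo$ sending $v_i\mapsto v_{\pi(i)}$ and fixing every vertex outside $\cW_v$. As an automorphism, $\sigma_\pi$ preserves the distribution of the random order; as it acts trivially on $\cW_w$ for $w\ne v$, it fixes each $U_{w_j}$ with $w\ne v$ pointwise; and for any $N$ larger than the support of $\pi$ it fixes every generator $\set{v_i<w_j}$ of $\cFN$ pointwise and therefore fixes $\cFoo$ as a $\sigma$-field. Consequently the conditional distribution of $\{U_{v_i}\}_i$ given $\mathcal G_v$ is invariant under $\sigma_\pi$; that is, $\{U_{v_i}\}_i$ is conditionally exchangeable given $\mathcal G_v$.

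The conditional de~Finetti theorem then supplies a random probability measure $\nu_v$ on $[0,1]$, identified almost surely with the empirical limit $\lim_n\frac1n\sum_{i=1}^n\id{U_{v_i}\le\cdot}$, such that conditional on $\mathcal G_v$ (and on $\nu_v$) the $U_{v_i}$ are i.i.d.\ with law $\nu_v$. By \refL{Uiid} the $U_{v_i}$ are unconditionally i.i.d.\ $U(0,1)$, so the strong law of large numbers forces $\nu_v(x)=x$ almost surely, i.e., $\nu_v$ is a.s.\ the uniform distribution on $[0,1]$. Since this conditional law is then non-random, $\{U_{v_i}\}_i$ is independent of $\mathcal G_v$, which is exactly what was needed.

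The main obstacle is essentially bookkeeping: verifying precisely that finite-support permutations within a single blob preserve $\cFoo$ (trivial once $N$ exceeds the support of the permutation), and identifying the conditional de~Finetti directing measure $\nu_v$, via the unconditional strong law applied pathwise, with the uniform distribution. Once these ingredients are assembled, the inductive passage from single-blob to joint independence, and the deduction of the second sentence from \refL{Ftail}, are entirely routine.
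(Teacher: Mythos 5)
Your proof is correct and rests on the same central observation as the paper's: a finite-support permutation of a single blob $\cW_v$ is an automorphism of $\Goo$ that fixes every generator $\set{v_i<w_j}$, $i,j>N$, of $\cFN$ once $N$ exceeds the support, hence fixes $\cFoo$ pointwise (and fixes $U_{w,j}$ for $w\ne v$). The difference is how that invariance is converted into independence. You argue blob by blob: you deduce conditional exchangeability of $\set{U_{v_i}}_i$ given $\mathcal G_v=\cFoo\vee\sigma(\set{U_{w,j}:w\ne v})$, invoke the conditional de~Finetti theorem, identify the directing measure as uniform via \refL{Uiid} and the strong law, and then recover joint independence by a standard peel-off induction over the finitely many blobs. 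The paper avoids conditional de~Finetti entirely: it notes that permutations of the finite initial segments $\cW_{v,N}=\set{v_1,\dots,v_N}$ (over all $v\in\cV$ simultaneously) preserve $\cFN$, so the induced orders on the $\cW_{v,N}$ are jointly independent and uniform even conditioned on $\cFN$, hence on $\cFoo\subseteq\cFN$; letting $N\to\infty$ gives joint independence of the full in-blob orders from $\cFoo$, and the $U_{v_i}$, being measurable functions of those orders, inherit it, with the second sentence following from \refL{Ftail}. The paper's route is lighter (it only needs the elementary fact that an exchangeable order on a finite set is uniform) and handles all blobs at once; your version is a valid alternative and the enlarged $\sigma$-field $\mathcal G_v$ gives a clean framework for the induction, though a reader would benefit from seeing the one-line verification that measure-preservation of $\sigma_\pi$ plus pointwise fixing of $\mathcal G_v$ yields a.s.\ exchangeability of the conditional law.
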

Note that the random variables $\set{F_{u,v}}_{u,v\in\cV}$ may be dependent
on each other.
\begin{proof}
The induced orders on the subsets $\cW_{v,N}:=\set{v_i}_{i=1}^N$, $v\in\cV$,
are independent and uniform, even conditioned on~$\cFN$, since permutations
of $\cW_{v,N}$ are automorphisms of~$\Goo$. Hence these induced orders are
independent of~$\cFoo$, and letting $N\to\infty$, we obtain that the induced
orders on the blobs $\cW_v$, $v\in\cV$, are (jointly) independent of~$\cFoo$.
The random variables $U_{v_i}$ depend on these induced orders only.
The result now follows by \refL{Ftail}.
\end{proof}

We note some useful formulae.
\begin{lemma}\label{l:FV}
 Let\/ $v,u\in\cV$. Then the following hold a.s.
 \begin{romenumerate}
 \item For every\/ $i\ge1$,
  \begin{equation}\label{e:V=supU}
   V_{u,v_i}=\sup_k\set{U_{u_k}:u_k<v_i}.
  \end{equation}
 \item For every\/ $i\ge1$,
  \begin{equation}\label{e:V=FU}
   V_{u,v_i}=F_{v,u}(U_{v_i}).
  \end{equation}
 \item For\/ $x\in[0,1]$,
  \begin{equation}
   F_{u,v}(x)=\sup\set{s:F_{v,u}(s)\le x}.
  \end{equation}
  Hence, $F_{u,v}$ is the right-continuous inverse of\/ $F_{v,u}$.
 \end{romenumerate}
\end{lemma}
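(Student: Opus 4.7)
The plan is to deduce (ii) from (i), and (iii) from (ii), with (i) itself following from a careful analysis of how the initial segment $\{u_k : u_k < v_i\}$ of $\cW_u$ relates to the i.i.d.\ uniform labels $U_{u_k}$.

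For (i), I would first observe that $A_i := \set{u_k : u_k < v_i}$ is an \emph{initial segment} of $\cW_u$ in the induced order on $\cW_u$: transitivity yields that $u_j < u_k < v_i$ implies $u_j < v_i$. By \refL{Uiid}, the order on $\cW_u$ is a.s.\ determined by the labels $U_{u_k}$ via $u_j < u_k \iff U_{u_j}<U_{u_k}$, so $A_i$ consists of those $u_k$ with $U_{u_k}$ below some threshold $T_i$. The density of $A_i$ in $\set{u_1,\dots,u_n}$ equals $V_{u,v_i}$ by definition \eqref{e:V}, while by the strong law of large numbers the density of $\set{u_k : U_{u_k}<t}$ is~$t$ for each~$t\in[0,1]$. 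Matching the two densities gives $T_i=V_{u,v_i}$ a.s., and since the i.i.d.\ uniform $U_{u_k}$ accumulate in $[0,T_i]$, the supremum in \eqref{e:V=supU} equals~$T_i=V_{u,v_i}$.

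For (ii), the crux is the symmetric claim that $u_k<v_i \iff V_{v,u_k}\le U_{v_i}$ a.s. Applying the argument of (i) with the roles of $u$ and $v$ swapped shows that $v_j<u_k \iff U_{v_j}<V_{v,u_k}$ (up to an a.s.\ null boundary event, since the $U_{v_j}$ are continuously distributed). Taking $j=i$ and complementing yields the claim. Averaging over $k$ and using \eqref{e:V} gives
\[
 V_{u,v_i}=\lim_\ntoo\frac1n\sum_{k=1}^n\id{V_{v,u_k}\le U_{v_i}}.
\]
To identify this with $F_{v,u}(U_{v_i})$, I would condition on $F_{v,u}$ and on~$U_{v_i}$: by \refL{UFindep} they are independent, so $U_{v_i}$ a.s.\ avoids the (at most countable) set of atoms of~$F_{v,u}$, and conditional on these the $V_{v,u_k}$ are i.i.d.\ with c.d.f.~$F_{v,u}$ by \refL{F}; the ordinary Glivenko--Cantinelli / LLN argument then identifies the limit with $F_{v,u}(U_{v_i})$.

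For (iii), I would substitute \eqref{e:V=FU} into the definition \eqref{e:F} of~$F_{u,v}$. Since the $U_{v_i}$ are i.i.d.\ uniform and independent of $F_{v,u}$, conditioning on $F_{v,u}$ and applying the law of large numbers gives
\[
 F_{u,v}(x)=\lambda\set{s\in[0,1]:F_{v,u}(s)\le x},
\]
with $\lambda$ Lebesgue measure. Because $F_{v,u}$ is non-decreasing and right-continuous, this sublevel set is an interval of the form $[0,T]$ or $[0,T)$ with $T=\sup\set{s:F_{v,u}(s)\le x}$; in either case its Lebesgue measure equals~$T$. This identifies $F_{u,v}$ as the right-continuous generalised inverse of $F_{v,u}$, as claimed.

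The main obstacle I anticipate is the measure-theoretic bookkeeping in (ii): the claim $u_k<v_i \iff V_{v,u_k}\le U_{v_i}$ must be promoted from an a.s.\ statement for each fixed~$k$ to one simultaneous in~$k$ (easy, by countable union), and the evaluation of the random c.d.f.\ $F_{v,u}$ at the random argument $U_{v_i}$ has to be handled via the independence granted by \refL{UFindep} so that atoms of $F_{v,u}$ do not spoil the limit. Once (ii) is on firm ground, (iii) is essentially a tautology about quantile transforms.
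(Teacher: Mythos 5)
Your three steps track the paper's proof quite closely, and the overall structure — derive (i) from the initial-segment structure and the law of large numbers, derive (ii) from (i) by swapping roles, derive (iii) by plugging (ii) into the definition of $F_{u,v}$ — is exactly what the paper does. Parts (i) and (iii) are fine as written.

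In (ii) there is a small overclaim. You assert that the argument of (i) gives the two-sided equivalence $v_j<u_k \iff U_{v_j}<V_{v,u_k}$ a.s., attributing the exclusion of the boundary event $\set{U_{v_j}=V_{v,u_k}}$ simply to the continuity of the $U_{v_j}$. But continuity of $U_{v_j}$ alone does not dispose of this event, since $V_{v,u_k}$ is itself built from the $U_{v_{j'}}$'s and is not obviously independent of $U_{v_j}$ (indeed, the joint independence of $U_{v_j}$ and $V_{v,u_k}$ is a consequence of \eqref{e:V=FU}, which is what one is trying to prove). The paper avoids this: it keeps only the one-sided implications
\[
 V_{v,u_k}<U_{v_i}\implies u_k<v_i\implies V_{v,u_k}\le U_{v_i},
\]
averages to get $F_{v,u}(U_{v_i}-)\le V_{u,v_i}\le F_{v,u}(U_{v_i})$, and then uses only that $U_{v_i}$ is continuous and \emph{independent of $F_{v,u}$} (\refL{UFindep} plus \refL{Ftail}) to conclude that $U_{v_i}$ is a.s.\ a continuity point of $F_{v,u}$, so the two bounds coincide. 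Your later remark about conditioning on $F_{v,u}$ and $U_{v_i}$ jointly again implicitly presupposes a conditional independence of $U_{v_i}$ from $\{V_{v,u_k}\}_k$ given $F_{v,u}$ that is not available at this stage. These are not fatal flaws — the conclusion is correct and you do reach for the right tool in \refL{UFindep} — but the clean way to close the argument is the paper's sandwiching, which needs only the weaker independence fact.
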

\begin{proof}
\pfitem{i}
Let $x:=\sup_k\set{U_{u_k}:u_k<v_i}$. Then
\[
 U_{u_j}<x \implies u_j<v_i \implies U_{u_j}\le x.
\]
Hence \eqref{e:V=supU} follows from definition \eqref{e:V} and the law of
large numbers.

\pfitem{ii}
By \eqref{e:vw}--\eqref{e:vw2}, recalling that $U_{v_i}=V_{v,v_i}$,
\[
 V_{v,u_k}<U_{v_i} \implies u_k<v_i \implies V_{v,u_k}\le U_{v_i}.
\]
Hence, the definitions \eqref{e:V} and \eqref{e:F} yield, a.s.,
\[
 F_{v,u}(U_{v_i}-)\le V_{u,v_i}\le F_{v,u}(U_{v_i}).
\]
Since $U_{v_i}$ is a continuous random variable, and independent of $F_{u,v}$
by \refL{UFindep}, $U_{v_i}$ is a.s.\ a continuity point of~$F_{u,v}$, and the
result follows.

\pfitem{iii}
By \eqref{e:F}, \eqref{e:V=FU} and the fact that $\set{U_{v_i}}_i$ are
i.i.d.\ and uniform, a.s.,
\[
 F_{u,v}(x)=\lim_\ntoo\frac1n\sum_{i=1}^n\id{F_{v,u}(U_{v_i})\le x}
 =\sup\set{s:F_{v,u}(s)\le x}.
\]
(This holds a.s.\ for e.g., all rational $x\in[0,1]$, and thus for all $x$
simultaneously.)
\end{proof}

\begin{theorem}\label{t:uniform}
 Fix any\/ $u\in\cV$. Then the following are equivalent.
 \begin{romenumerate}
  \item The random order on\/ $\Goo$ is uniform.
  \item The random variables\/ $V_{u,v_i}$, $v\in\cV$ and\/ $i\ge1$, are
   i.i.d.\ and uniform.
  \item The random distribution functions\/ $F_{u,v}$, $u,v\in\cV$, are
   a.s.\ equal to the identity; $F_{u,v}(x)=x$, $x\in[0,1]$.
 \end{romenumerate}
\end{theorem}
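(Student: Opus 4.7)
The plan is to establish the cycle (i)$\Rightarrow$(iii)$\Rightarrow$(ii)$\Rightarrow$(i). None of the implications should require a long new argument; the work is in combining the preceding machinery --- the de~Finetti representations giving $U_{v_i}$ and $F_{u,v}$, the compatibility formulae of \refL{FV}, and the realisation of the uniform order by i.i.d.\ labels from \refE{uniform} --- in the right order.

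For (i)$\Rightarrow$(iii), assume the random order on $\Goo$ is uniform. Then by \refE{uniform} we may represent it via i.i.d.\ variables $X_v\sim U(0,1)$, one per vertex of $\Goo$. The definition \eqref{e:V} of $V_{u,v_i}$ together with the strong law of large numbers gives $V_{u,v_i}=X_{v_i}$ a.s.\ (exactly as in the proof of \refL{Uiid}). Substituting this into \eqref{e:F} and again applying the law of large numbers to the i.i.d.\ uniform sample $X_{v_1},X_{v_2},\dots$ yields $F_{u,v}(x)=x$ a.s.\ for each pair $u,v\in\cV$, which is (iii).

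For (iii)$\Rightarrow$(ii), assume every $F_{u,v}$ is a.s.\ the identity on $[0,1]$. Then \refL{FV}(ii) immediately gives $V_{u,v_i}=F_{v,u}(U_{v_i})=U_{v_i}$ a.s., for every $v\in\cV$ and $i\ge1$. By \refL{UUiid} the $U_{v_i}$ are i.i.d.\ $U(0,1)$, so the same holds for the $V_{u,v_i}$. For (ii)$\Rightarrow$(i), suppose the $V_{u,v_i}$ (with $u$ fixed and $v,i$ varying) are i.i.d.\ uniform; in particular they are a.s.\ pairwise distinct. Implications \eqref{e:vw} and \eqref{e:vw2} together then show that, on this full-probability event, the relation $v_i<w_j$ holds iff $V_{u,v_i}<V_{u,w_j}$, so the entire order on $\Goo$ is obtained by sorting the vertices according to their associated i.i.d.\ uniform labels --- which is precisely the uniform random order of \refE{uniform}.

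I do not expect a serious obstacle: (i)$\Rightarrow$(iii) is just the law of large numbers once one uses the $X_v$-representation of the uniform order, (iii)$\Rightarrow$(ii) is one line from \refL{FV}(ii) and \refL{UUiid}, and (ii)$\Rightarrow$(i) is the observation that i.i.d.\ continuous labels that determine the order must produce the uniform order. The only minor subtlety worth checking carefully is that in (ii)$\Rightarrow$(i) one needs the $V_{u,v_i}$ to determine the order across \emph{different} blobs as well --- but this is exactly what \eqref{e:vw}--\eqref{e:vw2} provide once almost-sure distinctness is in hand.
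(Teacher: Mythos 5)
Your proposal is correct and uses the same underlying ingredients as the paper (the law of large numbers once the uniform order is represented by i.i.d.\ $X_v$, Lemma~\ref{l:FV}, Lemma~\ref{l:UUiid}, and the order-determination property \eqref{e:vw}--\eqref{e:vw2}). The only structural difference is cosmetic: you close a single cycle (i)$\Rightarrow$(iii)$\Rightarrow$(ii)$\Rightarrow$(i), whereas the paper proves (i)$\Leftrightarrow$(ii) and (ii)$\Leftrightarrow$(iii) separately. Your (iii)$\Rightarrow$(ii) reads ``every $F_{u,v}$'' as ranging over all ordered pairs, which lets you apply Lemma~\ref{l:FV}(ii) directly to $F_{v,u}$; the paper, fixing $u$, instead invokes Lemma~\ref{l:FV}(iii) to invert $F_{u,v}$ and conclude $F_{v,u}=\mathrm{id}$. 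Both readings lead to the same conclusion since the equivalence with (i) makes the ``fixed-$u$'' and ``all-pairs'' versions of (iii) agree, so your proof is sound.
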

We may assume $u\ne v$ in (iii) as this always holds for $u=v$;
see \refR{u=v}.
\begin{proof}
(i)$\implies$(ii):
We may assume that the random order is given by i.i.d.\ uniform random
variables $X_{v_i}$ as in \refE{uniform}, and then $V_{u,v_i}=X_{v_i}$
a.s.\ by \eqref{e:V} and the law of large numbers.

\noindent(ii)$\implies$(i):
Immediate by \eqref{e:vw2}.

\noindent(ii)$\implies$(iii):
By \eqref{e:F} and the law of large numbers.

\noindent(iii)$\implies$(ii):
By \refL{FV}(iii), $F_{v,u}=F_{u,v}^{-1}$ is the identity. Thus \refL{FV}(ii)
yields $V_{u,v_i}=U_{v_i}$ and (ii) follows by \refL{UUiid}.
\end{proof}

\begin{remark}\label{r:Venough}
Consider again any consistent order on~$\Goo$. It follows from Lemmas
\ref{l:UUiid} and \ref{l:UFindep} together with \eqref{e:V=FU} that, for any
pair $u,v\in\cV$ and $i,j\ge1$, the random variables $V_{u,u_i}=U_{u_i}$ and
$V_{u,v_j}=F_{v,u}(U_{v_j})$ are independent, with $U_{u_i}$ uniform. In
particular, these two random variables are a.s.\ distinct, and thus they
determine the order between $u_i$ and $v_j$ by \eqref{e:vw}--\eqref{e:vw2}.
Hence, the order is a.s.\ determined by the collection of all $V_{u,v_i}$
($u,v\in\cV$, $i\ge1$). (As remarked in \refR{order}, it is sometimes, but not
always, possible to use just a single~$u$.)

Note also that Lemmas \ref{l:UUiid} and \ref{l:UFindep} together with
\eqref{e:V=FU} show that the random $V_{u,v_i}$ may be constructed by randomly
selecting first $\set{F_{u,v}}_{u,v}$ with the right distribution and then
i.i.d.\ uniform~$U_{u_i}$, and defining $V_{u,v_i}:=F_{v,u}(U_{v_i})$. The
conditional distribution of $V_{u,v_i}$ given $\set{F_{w,z}}_{w,z\in\cV}$ is
thus $F_{u,v}$, c.f.\ \refL{F}.
\end{remark}

\begin{remark}
This section only uses automorphisms of $\Goo$ that preserves each~$\cW_v$
(and thus is a permutation of each~$\cW_v$). \refR{Venough} thus gives a
description of all random orders that are invariant under this group of
permutations of the vertices of~$\Goo$. (Conversely, the construction
above yields such a random order. In particular, if we fix $u$ and any
distribution of $\set{F_{u,v}}_v$ such that each $F_{u,v}$ is continuous,
this defines a random order of this type on~$\Goo$. If some $F_{u,v}$
have atoms, we may have to further specify the order.)
\end{remark}

\section{Uniformity of templates}\label{s:templates}

Recall that a template $G$ is uniform if the only consistent random order on
$\Goo$ is the uniform random order.

\begin{remark}\label{r:comp}
If $G$ is uniform, then so is its complement $\bG$ (with the labels full and
empty interchanged), since the corresponding graphs $\Goo$ and $\bGoo$ are
complements of each other, and thus have the same isomorphisms between
subgraphs.
\end{remark}

\begin{lemma}\label{l:homog}
 A template with a single vertex is uniform. More generally, any template
 consisting only of empty vertices and no edges is uniform, and so is
 any complete template consisting only of full vertices.
\end{lemma}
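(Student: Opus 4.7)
The plan is to reduce all three assertions in the lemma to a single observation: under each hypothesis, the infinite blow-up $\Goo$ is itself a \emph{homogeneous} graph in the sense of the introduction. Indeed, if the template consists only of empty vertices with no edges, then two vertices of $\Goo$ in the same blob $\cW_v$ are non-adjacent (because $v$ is empty) and two vertices in different blobs $\cW_v$, $\cW_w$ are also non-adjacent (because $vw$ is not an edge of the template), so $\Goo=\bK_\infty$. Dually, if the template is complete with only full vertices, then every two vertices of $\Goo$ are adjacent, so $\Goo=K_\infty$. The one-vertex template is a special case of one of these two, depending on its label.

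Next, I invoke the argument already given after \refE{uniform}. Because $\Goo$ is homogeneous, \emph{every} permutation of any finite subset $S\subseteq V(\Goo)$ extends to an automorphism of $\Goo$ (map $S$ to a chosen target finite subset by the permutation and extend arbitrarily on the complement). Consistency of the random order on $\Goo$ therefore forces the induced order on $S$ to be invariant under all $|S|!$ permutations of $S$, hence to be uniform on orderings of $S$. This pins down all finite-dimensional distributions of the random order; since the $\sigma$-algebra on $\cO_\Goo$ is generated by the events $\set{u<v}$ for $u,v\in V(\Goo)$, the full distribution is thereby forced to be the uniform random order of \refE{uniform}.

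Alternatively, one could derive the same conclusion from \refT{uniform}: by the homogeneity of $\Goo$, a transposition of any $v_i\in\cW_v$ with any $u_j\in\cW_u$ is an automorphism of $\Goo$, so the random variables $V_{u,v_i}$ have the same joint distribution as $V_{u,u_j}=U_{u_j}$, and criterion (ii) of \refT{uniform} is satisfied by \refL{UUiid}. Either route gives the conclusion with no real obstacle; the only point worth remarking is that the complete case can also be recovered from the empty case by \refR{comp}, so in fact only one of the two cases needs an independent argument.
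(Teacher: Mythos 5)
Your proof is correct and follows essentially the same route as the paper's: observe that $\Goo$ is a complete or empty graph, so every permutation of its vertex set is an automorphism, and conclude (as in the introductory discussion of homogeneous graphs and the proof of Lemma~\ref{l:Uiid}) that consistency forces the uniform distribution on every finite subset and hence on the whole graph. The additional alternatives you offer (via Theorem~\ref{t:uniform}(ii) and via Remark~\ref{r:comp}) are sound but not needed.
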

\begin{proof}
In the cases described, $\Goo$ is homogeneous, and thus any permutation
of the vertices is an isomorphism. Hence any consistent random order is
uniform. (Cf.\ the proof of \refL{Uiid}.)
\end{proof}

Given a consistent random order on~$\Goo$, we define a relation $\equiv$ on
$\cV$ by letting $v\equiv w$ if the induced random order on $\cW_v\cup\cW_w$
is uniform. This relation is clearly symmetric, and it is reflexive
by \refL{homog}. We shall soon see that it also is transitive.

\begin{lemma}\label{l:=}
 Suppose that\/ $v,w\in\cV$. Then the following are equivalent.
 \begin{romenumerate}
  \item\label{lv=w} $v\equiv w$.
  \item\label{lv=wV} $V_{v,u_i}=V_{w,u_i}$ a.s., for every\/
   $u\in\cV$ and\/ $i\ge1$.
  \item\label{lv=wF} $F_{v,u}=F_{w,u}$ a.s., for every\/ $u\in\cV$.
  \item\label{lv=wFx} $F_{u,v}=F_{u,w}$ a.s., for every\/ $u\in\cV$.
  \item\label{lv=wF2} $F_{w,v}(x)=x$ a.s., for every\/ $x\in[0,1]$.
 \end{romenumerate}
\end{lemma}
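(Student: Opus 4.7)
The plan is to cycle through the implications (i) $\Rightarrow$ (ii) $\Leftrightarrow$ (iv) $\Leftrightarrow$ (iii) $\Rightarrow$ (v) $\Leftrightarrow$ (i); all links except (i) $\Rightarrow$ (ii) are structural consequences of \refL{FV} and \refT{uniform}, while the substantive content sits in the step (i) $\Rightarrow$ (ii), which I would handle by a ``common threshold'' argument.

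For (ii) $\Leftrightarrow$ (iv), I would use \refL{FV}(ii) to rewrite (ii) as $F_{u,v}(U_{u_i})=F_{u,w}(U_{u_i})$ a.s.\ for every $u$ and every $i$. Since by Lemmas \ref{l:UUiid} and \ref{l:UFindep} the sequence $\set{U_{u_i}}\ioo$ is i.i.d.\ uniform and independent of the family of $F$'s, its values are a.s.\ dense in $[0,1]$, so equality on them together with right-continuity of the distribution functions forces $F_{u,v}=F_{u,w}$ a.s., which is (iv). The equivalence (iii) $\Leftrightarrow$ (iv) is immediate from \refL{FV}(iii), since $F_{u,v}$ and $F_{v,u}$ are each other's right-continuous inverses and inversion preserves a.s.\ equality. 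For (i) $\Leftrightarrow$ (v) I would apply \refT{uniform} to the two-vertex sub-template on $\set{v,w}$, whose infinite blow-up is the induced subgraph on $\cW_v\cup\cW_w$: by \refR{u=v} the functions $F_{v,v}$ and $F_{w,w}$ are the identity, and by \refL{FV}(iii) $F_{v,w}$ is the identity iff $F_{w,v}$ is, so (v) is precisely condition (iii) of \refT{uniform} for this sub-template, which the theorem equates with (i). Finally, (iii) $\Rightarrow$ (v) follows by specialising to $u=v$ and using \refR{u=v}.

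It remains to prove (i) $\Rightarrow$ (ii); the cases $u\in\set{v,w}$ follow from (v) together with \refL{FV}(ii) (using $F_{v,v}=F_{w,w}=\mathrm{id}$ by \refR{u=v}), so take $u\notin\set{v,w}$. Under (i) the order on $\cW_v\cup\cW_w$ is uniform, so by \refE{uniform} we may realise it via i.i.d.\ uniform variables $X_{v_i},X_{w_j}\in[0,1]$, and the SLLN gives $X_{v_i}=U_{v_i}$ and $X_{w_j}=U_{w_j}$ a.s. Fix $u_k\in\cW_u$ and set
\[
 Y_v:=\sup\set{X_{v_i}: v_i<u_k}, \qquad Y_w:=\sup\set{X_{w_j}: w_j<u_k}.
\]
The main obstacle is to show $Y_v=Y_w$ a.s. I would argue by contradiction: if $Y_v<Y_w$ then the density of $\set{X_{v_i}}$ and $\set{X_{w_j}}$ in $[0,1]$ (again by SLLN) yields indices $i,j$ with $X_{v_i},X_{w_j}\in(Y_v,Y_w)$ and $X_{v_i}<X_{w_j}$; by the definitions of $Y_v$ and $Y_w$ this gives $v_i>u_k$ and $w_j<u_k$, so $w_j<v_i$, contradicting the relation $v_i<w_j$ that $X_{v_i}<X_{w_j}$ imposes in the uniform representation. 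Writing $Y_{u_k}$ for the common value, a further application of the SLLN yields $V_{v,u_k}=Y_{u_k}=V_{w,u_k}$, which is (ii). Conceptually, uniformity of the joint order on $\cW_v\cup\cW_w$ forces the outside vertex $u_k$ to sit at the same ``level'' with respect to both blobs, and this is exactly what (ii) records.
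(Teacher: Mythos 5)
Your proof is correct, and it follows essentially the same route as the paper's. The substantive step is \textup{(i)}$\Rightarrow$\textup{(ii)}, and your core idea --- uniformity of the order on $\cW_v\cup\cW_w$ forces an outside vertex $u_k$ to sit at the same level relative to both blobs --- is exactly the paper's. The paper makes the coupling precise by first applying \refT{uniform}\textup{(iii)} to $\cW_v\cup\cW_w$ to get $F_{w,v}=F_{v,w}=\mathrm{id}$, then using \refL{FV}(ii) to conclude $V_{v,w_k}=U_{w_k}$ (so the cross-blob order is determined by the $U$'s), and finishes with an $\eps$--$\sup$ argument via \eqref{e:V=supU}; you instead invoke the uniform representation from \refE{uniform} and identify $X_{v_i}=U_{v_i}$, $X_{w_j}=U_{w_j}$ via the SLLN, then run a density contradiction. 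These are equivalent: the SLLN identification you use is valid precisely because the joint law of $(\sigma|_{\cW_v\cup\cW_w},\{U_{v_i},U_{w_j}\})$ equals its law under the uniform model, which is also what $F_{w,v}=\mathrm{id}$ records; and your contradiction argument (if $Y_v<Y_w$, take $X_{v_i},X_{w_j}\in(Y_v,Y_w)$ with $X_{v_i}<X_{w_j}$) is the same $\sup$-density mechanism as the paper's choice of $U_{v_j}$ and $U_{w_k}$ near $V_{v,u_i}$. The remaining links are structural and use the same lemmas (\refL{FV}, \refT{uniform}, \refR{u=v}); your chain \textup{(ii)}$\Leftrightarrow$\textup{(iv)}$\Leftrightarrow$\textup{(iii)}$\Rightarrow$\textup{(v)}$\Leftrightarrow$\textup{(i)} is simply a reordering of the paper's \textup{(ii)}$\Rightarrow$\textup{(iii)}$\Rightarrow$\textup{(iv)}$\Rightarrow$\textup{(v)}$\Rightarrow$\textup{(i)}, with a minor extra observation (via \refL{FV}(iii)) needed to upgrade \textup{(v)}$\Rightarrow$\textup{(i)} to an equivalence, which is harmless.
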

\begin{proof}
\ref{lv=w}$\implies$\ref{lv=wV}:
Suppose $v\equiv w$.
By \refT{uniform} applied to $\cW_v\cup\cW_w$, $F_{w,v}(x)=F_{v,w}(x)=x$ a.s.
Hence, \refL{FV}(ii) yields $V_{v,w_i}=U_{w_i}$.

Fix $u$ and~$i$. Let $\eps>0$ and choose first a $j\ge1$ such that
$U_{v_j}\in(V_{v,u_i}-\eps,V_{v,u_i})$ and then a $k\ge1$ such that
$U_{w_k}\in(V_{v,u_i}-\eps,U_{v_j})$. Then
$V_{v,w_k}=U_{w_k}<U_{v_j}<V_{v,u_i}$, so $w_k<u_i$ by~\eqref{e:vw2}.
Hence, \eqref{e:V=supU} yields
\[
 V_{w,u_i}\ge U_{w_k} > V_{v,u_i}-\eps.
\]
Since $\eps$ is arbitrary, this yields $V_{w,u_i}\ge V_{v,u_i}$,
Interchanging $v$ and $w$ we obtain~\ref{lv=wV}.

\noindent\ref{lv=wV}$\implies$\ref{lv=wF}:
By definition \eqref{e:F}.

\noindent\ref{lv=wF}$\implies$\ref{lv=wFx}:
By \refL{FV}(iii).

\noindent\ref{lv=wFx}$\implies$\ref{lv=wF2}:
Taking $u=w$ we have $F_{w,v}(x)=F_{w,w}(x)=x$.

\noindent\ref{lv=wF2}$\implies$\ref{lv=w}:
\refT{uniform} shows that the induced random order on $\cW_v\cup\cW_w$ is
uniform.
\end{proof}

\begin{corollary}\label{c:equiv}
 The relation\/ $\equiv$ is an equivalence relation on\/~$\cV$.
\end{corollary}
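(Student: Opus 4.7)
The plan is to deduce all three properties of an equivalence relation by invoking the characterizations established in \refL{=}. Reflexivity and symmetry are essentially already in hand: reflexivity was observed immediately after the definition of $\equiv$ (and follows from \refL{homog} applied to the single blob $\cW_v$), while symmetry is built into the definition, since the condition ``the induced random order on $\cW_v\cup\cW_w$ is uniform'' is manifestly symmetric in $v$ and $w$. So the only thing requiring argument is transitivity.

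For transitivity, I would use characterization \ref{lv=wF} of \refL{=}: $v\equiv w$ is equivalent to $F_{v,u}=F_{w,u}$ a.s.\ for every $u\in\cV$. Suppose $u\equiv v$ and $v\equiv w$. Then $F_{u,z}=F_{v,z}$ a.s.\ for every $z\in\cV$, and $F_{v,z}=F_{w,z}$ a.s.\ for every $z\in\cV$. Combining these, $F_{u,z}=F_{w,z}$ a.s.\ for every $z\in\cV$, so by \refL{=}\ref{lv=wF} again, $u\equiv w$.

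There is no real obstacle here; the work was done in \refL{=}, which provided five equivalent formulations of $v\equiv w$. Each of the formulations \ref{lv=wV}--\ref{lv=wFx} is preserved under taking compositions in the obvious sense, so any of them would do the job equally well (one could also argue via \ref{lv=wV} using $V_{u,z_i}=V_{v,z_i}=V_{w,z_i}$ a.s.). The essential point is that the apparently binary condition defining $\equiv$ factors through the per-vertex family $\set{F_{v,\cdot}}$, and equality of these families is obviously transitive.
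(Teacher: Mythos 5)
Your proof is correct and is essentially the paper's argument: the paper also deduces the result from \refL{=}, noting that characterization \ref{lv=wV} (you use \ref{lv=wF}, equally valid) manifestly defines an equivalence relation because it reduces $v\equiv w$ to equality of per-vertex data.
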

\begin{proof}
By \refL{=}, since (for example) \ref{lv=wV} defines an equivalence
relation.
\end{proof}

\begin{corollary}\label{c:V=U}
 If\/ $v\equiv w$, then\/ $V_{w,v_i}=U_{v_i}$ a.s.\ for every\/ $i\ge1$.
\end{corollary}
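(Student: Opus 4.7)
The plan is to read the desired equality straight off one of the equivalent conditions established in Lemma~\ref{l:=}. Recall from the paragraph preceding Lemma~\ref{l:F} that $V_{v,v_i}$ coincides with $U_{v_i}$ by the very definitions \eqref{e:U} and \eqref{e:V}; this is the key trivial observation that turns the corollary into a one-line deduction.

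Concretely, I would invoke the equivalence \ref{lv=w}$\Longleftrightarrow$\ref{lv=wV} in Lemma~\ref{l:=}. Assuming $v\equiv w$, condition \ref{lv=wV} gives $V_{v,u_i}=V_{w,u_i}$ a.s.\ for every $u\in\cV$ and every $i\ge1$. Specialising to $u=v$ yields $V_{v,v_i}=V_{w,v_i}$ a.s., and combining this with $V_{v,v_i}=U_{v_i}$ immediately gives $V_{w,v_i}=U_{v_i}$ a.s., which is the claim.

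As a sanity check one can also derive the conclusion via Lemma~\ref{l:FV}(ii): one has $V_{w,v_i}=F_{v,w}(U_{v_i})$, and since $\equiv$ is symmetric (Corollary~\ref{c:equiv}), we may apply \ref{lv=w}$\Longrightarrow$\ref{lv=wF2} with the roles of $v$ and $w$ exchanged to get $F_{v,w}(x)=x$ a.s., producing the same conclusion. There is no real obstacle here --- the whole content of the corollary has already been packaged into Lemma~\ref{l:=}, and I expect the proof to be essentially a single sentence citing the appropriate clause of that lemma.
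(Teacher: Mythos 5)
Your main argument is exactly the paper's proof: apply Lemma~\ref{l:=}\ref{lv=wV} with $u=v$ to get $V_{w,v_i}=V_{v,v_i}$, and note $V_{v,v_i}=U_{v_i}$ by definition. The alternative via Lemma~\ref{l:FV}(ii) is also fine, but the first route is what the paper does.
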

\begin{proof}
By \refL{=}, $V_{w,v_i}=V_{v,v_i}=U_{v_i}$.
\end{proof}

\begin{lemma}\label{l:=unif}
 The random order on\/ $\Goo$ is uniform if and only if\/ $v\equiv w$ for any
 two vertices\/ $v,w\in\cV$.
\end{lemma}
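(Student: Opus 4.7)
The plan is to derive both directions from the equivalences already established in this section, so no new ideas are really required.

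For the forward direction, I would simply observe that if the random order on $\Goo$ is uniform, then so is its restriction to the union of any two blobs $\cW_v\cup\cW_w$, because a uniform order restricts to a uniform one. This restriction \emph{is} the random order on the blob-union corresponding to the two-vertex sub-template $\{v,w\}$, and hence, by the definition of $\equiv$, we obtain $v\equiv w$ for every pair $v,w\in\cV$.

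For the reverse direction, the plan is to funnel everything through \refT{uniform}(iii), which reduces uniformity of the order on $\Goo$ to the statement that each $F_{u,v}$ is a.s.\ the identity. Assuming $v\equiv w$ for every pair $v,w\in\cV$, condition \ref{lv=wF2} in \refL{=} gives exactly $F_{w,v}(x)=x$ a.s., for all $x\in[0,1]$, for every pair $v,w\in\cV$ (the case $v=w$ being handled by \refR{u=v}). Applying \refT{uniform}(iii) then concludes that the random order on $\Goo$ is uniform.

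There is no real obstacle here; the lemma is essentially a packaging statement that consolidates the pairwise characterization from \refL{=} with the global characterization in \refT{uniform}. The only point worth being slightly careful about is to state explicitly that uniformity on $\Goo$ restricts to uniformity on the sub-blow-up $\cW_v\cup\cW_w$, so that the forward implication matches the definition of $v\equiv w$.
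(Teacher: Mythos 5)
Your proof is correct and follows the same route as the paper, which simply cites \refL{=} and \refT{uniform} without elaboration; your write-up just makes the two-sentence argument explicit (forward: restriction of a uniform order to $\cW_v\cup\cW_w$ is uniform; reverse: \refL{=}\ref{lv=wF2} gives $F_{w,v}=\mathrm{id}$ a.s.\ for all pairs, then \refT{uniform}(iii)).
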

\begin{proof}
A consequence of \refL{=} and \refT{uniform}.
\end{proof}

\begin{lemma}\label{l:2pairs}
 Suppose that the template\/ $G$ contains two\/ $($not necessarily disjoint$)$
 pairs\/ $u,v$ and\/ $w,z$ such that the induced subtemplates with vertices\/
 $\set{u,v}$ and\/ $\set{w,z}$ are isomorphic. If\/ $u\equiv v$, then\/
 $w\equiv z$.
\end{lemma}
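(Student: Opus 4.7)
The plan is to use consistency of the random order on $\Goo$ directly: the isomorphism between the two-vertex subtemplates induces an isomorphism between the associated infinite blow-up subgraphs of $\Goo$, and consistency transports uniformity of the induced order on one to uniformity on the other.

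First I would set up the isomorphism. By hypothesis there exists an isomorphism of templates $\phi\colon G[\set{u,v}]\to G[\set{w,z}]$, i.e., a bijection of two-element vertex sets preserving both the edge and the full/empty labels. Extend this to a bijection $\tilde\phi\colon \cW_u\cup\cW_v\to\cW_w\cup\cW_z$ by $\tilde\phi(u_i)=\phi(u)_i$ and $\tilde\phi(v_i)=\phi(v)_i$ for every $i\ge1$. From the definition of the infinite blow-up, $\cW_u\cup\cW_v$ induces in $\Goo$ exactly the blow-up of $G[\set{u,v}]$, and likewise on the other side; since $\phi$ preserves labels and adjacency, $\tilde\phi$ is a graph isomorphism between these two induced subgraphs of $\Goo$. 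Note that this argument is completely insensitive to whether $\set{u,v}$ and $\set{w,z}$ overlap.

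Next I would invoke consistency. By assumption the given random order on $\Goo$ is consistent, and as noted in the introduction of the paper, consistency for a fixed graph extends from finite isomorphic induced subgraphs to infinite ones. Applying this to the pair $\bigl(\cW_u\cup\cW_v,\,\cW_w\cup\cW_z\bigr)$ with isomorphism $\tilde\phi$, the induced random orders on these two infinite subgraphs have the same distribution after relabelling by $\tilde\phi$.

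Finally, because uniformity of a random total order is a purely combinatorial property of the distribution on orderings (invariant under any bijection of the underlying set), and because $u\equiv v$ means by definition that the induced order on $\cW_u\cup\cW_v$ is uniform, the transported distribution on $\cW_w\cup\cW_z$ is also uniform, giving $w\equiv z$. I do not expect a serious obstacle here: the only mild subtlety is ensuring that $\tilde\phi$ really is a graph isomorphism when the two pairs overlap, but since the blow-up is defined vertex by vertex from the template labels and edges, this follows immediately from $\phi$ being a template isomorphism.
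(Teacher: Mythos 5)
Your proposal is correct and follows essentially the same route as the paper: construct the isomorphism between the induced subgraphs $\cW_u\cup\cW_v$ and $\cW_w\cup\cW_z$ of $\Goo$ coming from the template isomorphism, invoke consistency to transport the distribution of the induced order, and conclude that uniformity is preserved. The paper's proof is just a terser version of the same argument.
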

\begin{proof}
The induced subgraphs of $\Goo$ on $\cW_u\cup\cW_v$ and $\cW_w\cup\cW_z$ are
isomorphic, and thus the induced random orders on these subgraphs have
distributions that are mapped to each other by the isomorphism mapping
$u_i\mapsto w_i$ and $v_i\mapsto z_i$. Hence, if the random order induced on
$\cW_u\cup\cW_v$ is uniform, then so is the random order induced on
$\cW_w\cup\cW_z$.
\end{proof}

\begin{lemma}\label{l:allpairs}
 Suppose that the template\/ $G$ contains an induced subtemplate\/ $H$
 such that any consistent ordering on $G_\infty$ induces a uniform
 ordering on~$H_\infty$.
 Furthermore suppose\/ $H$ contains two $($not necessarily
 disjoint$)$ pairs of vertices\/ $u,v$ and\/ $u',v'$ such that\/ $u$ and\/
 $u'$ are full, $v$ and\/ $v'$ are empty, and furthermore\/ $uv\in E(G)$
 and\/ $u'v'\notin E(G)$. Then\/ $G$ is uniform.
\end{lemma}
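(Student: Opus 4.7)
The plan is to apply \refL{=unif}: it suffices to show that any two vertices of $G$ are $\equiv$-equivalent. The strategy is to use the given pairs $\{u,v\}$ and $\{u',v'\}$ as seeds and then propagate the equivalence to every vertex of $G$ via \refL{2pairs} together with transitivity.

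First I would observe that, since the fixed consistent random order on $\Goo$ restricts to the uniform random order on $H_\infty$ by hypothesis, its further restriction to $\cW_u\cup\cW_v$ and to $\cW_{u'}\cup\cW_{v'}$ is also uniform. By the definition of $\equiv$ (stated just before \refL{=}), this is exactly $u\equiv v$ and $u'\equiv v'$, viewed as relations in $G$.

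Next I would apply \refL{2pairs} twice. The subtemplate of $G$ induced on $\{u,v\}$ consists of a full vertex joined by an edge to an empty vertex; combined with $u\equiv v$, \refL{2pairs} yields $a\equiv b$ for every pair $a,b$ of vertices of $G$ with $a$ full, $b$ empty, and $ab\in E(G)$. The subtemplate induced on $\{u',v'\}$ is a full vertex together with a non-adjacent empty vertex, so the analogous application gives $a\equiv b$ whenever $a$ is full, $b$ is empty, and $ab\notin E(G)$. Together these two cases cover every full--empty pair in $G$.

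Finally, by \refC{equiv} the relation $\equiv$ is transitive; since $G$ contains at least one full vertex (namely $u$) and at least one empty vertex (namely $v$), any two full vertices $a,a'$ satisfy $a\equiv v\equiv a'$, and similarly any two empty vertices are equivalent via $u$. Hence $\equiv$ is the total equivalence on the vertices of $G$, so \refL{=unif} concludes that the random order on $\Goo$ is uniform. The whole argument is essentially an assembly of the preceding lemmas, so no step is a genuine technical obstacle; the only conceptual point is that $\{u,v\}$ and $\{u',v'\}$ together realise \emph{both} isomorphism types of a full--empty pair, which is precisely what allows \refL{2pairs} to bridge every pair of labels.
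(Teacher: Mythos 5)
Your proof is correct and follows essentially the same route as the paper's: both establish $u\equiv v$ and $u'\equiv v'$ from the uniformity on $H_\infty$, then use \refL{2pairs} to propagate equivalence to every full--empty pair (adjacent or not), and finally invoke transitivity (\refC{equiv}) and \refL{=unif}. The only cosmetic difference is that the paper directly shows each $z\in\cV$ satisfies $z\equiv u$ or $z\equiv v$ case by case, while you first cover all full--empty pairs and then close under transitivity; the underlying argument is identical.
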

\begin{proof}
Since the ordering on $H_\infty$ is uniform,
we have $u\equiv v\equiv u'\equiv v'$.

If $z\in\cV$ is empty and $zu\in E(G)$, then the subtemplates \set{z,u} and
\set{v,u} are isomorphic, Since $v\equiv u$, we have $z\equiv u$ by
\refL{2pairs}.

If $z\in\cV$ is empty and $zu\notin E(G)$, we argue similarly using the
isomorphic subtemplates \set{z,u} and \set{v',u'} and obtain
$z\equiv u$.

If $z\in\cV$ is full we argue similarly using the pairs \set{z,v} and
\set{u,v}, or \set{z,v} and \set{u',v'} to obtain $z\equiv v$.

Hence $z\equiv u\equiv v$ for every $z\in\cV$, and \refL{=unif} shows that the random
order on $\Goo$ is uniform.
\end{proof}

We now show that any template $G$ containing certain 3-vertex subtemplates
are necessarily uniform (see \refF{subtemplates}).

\begin{figure}
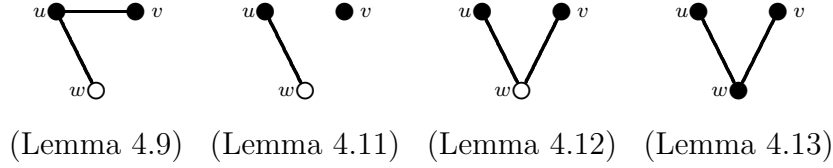

\[\unit=15pt
 \begin{array}{cccc}
 \ptll{0}{0}{w}\ptll{-1}{2}{u}\ptlr{1}{2}{v}
 \pe{0}{0}\pt{-1}{2}\pt{1}{2}\dl{-.1}{.2}{-1}{2}\dl{-1}{2}{1}{2}&
 \ptll{0}{0}{w}\ptll{-1}{2}{u}\ptlr{1}{2}{v}
 \pe{0}{0}\pt{-1}{2}\pt{1}{2}\dl{-.1}{.2}{-1}{2}&
 \ptll{0}{0}{w}\ptll{-1}{2}{u}\ptlr{1}{2}{v}
 \pe{0}{0}\pt{-1}{2}\pt{1}{2}\dl{-.1}{.2}{-1}{2}\dl{.1}{.2}{1}{2}&
 \ptll{0}{0}{w}\ptll{-1}{2}{u}\ptlr{1}{2}{v}
 \pt{0}{0}\pt{-1}{2}\pt{1}{2}\dl{-.1}{.2}{-1}{2}\dl{.1}{.2}{1}{2}\\[10pt]
 \text{(\refL{e-f-f})}&\text{(\refL{e-f,f})}&
 \text{(\refL{f-e-f})}&\text{(\refL{f-f-f})}
 \end{array}
\]
\caption{Subtemplates implying uniformity of $G$.}\label{f:subtemplates}
\end{figure}

\begin{lemma}\label{l:e-f-f}
 Suppose that the template\/ $G$ contains two full vertices\/ $u$ and\/ $v$
 and an empty vertex\/~$w$, with\/ $uv,uw\in E(G)$ and\/ $vw\notin E(G)$.
 Then\/ $G$ is uniform.
\end{lemma}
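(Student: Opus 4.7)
The plan is to apply \refL{allpairs} to $G$ with the three-vertex induced subtemplate $H=G[\{u,v,w\}]$. Inside $H$ the pair $(u,w)$ is a full--empty pair with $uw\in E(G)$ and the pair $(v,w)$ is a full--empty pair with $vw\notin E(G)$, so the pair hypothesis of \refL{allpairs} is met. It therefore suffices to show that any consistent random ordering of $G_\infty$ induces a uniform ordering on the embedded copy $H_\infty\subseteq G_\infty$. Since such a restriction is itself a consistent ordering on $H_\infty$, this is the same as showing that the template $H$ is uniform. By \refC{equiv} and \refL{=unif}, it is enough to prove, for any consistent ordering on $H_\infty$, the two equivalences $u\equiv v$ and $u\equiv w$.

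The equivalence $u\equiv v$ is immediate: since $u,v$ are full and $uv\in E(G)$, the induced subgraph of $H_\infty$ on $\cW_u\cup\cW_v$ is $K_\infty$, which is homogeneous, so \refL{homog} makes the restricted ordering uniform. Then \refT{uniform} combined with \refL{=} gives $V_{u,v_j}=U_{v_j}\sim U(0,1)$ a.s., where the $U_{v_j}$ are independent of the tail $\sigma$-field $\cFoo$ (by Lemmas \ref{l:UUiid} and \ref{l:UFindep}). For $u\equiv w$, I use the following isomorphism. For each $n\ge 1$ the induced subgraphs
\[
A_n=\{u_1,\ldots,u_n,w_1,w_2\}\qquad\text{and}\qquad B_n=\{u_1,\ldots,u_n,v_1,w_1\}
\]
of $H_\infty$ are both isomorphic to $K_n$ together with two additional non-adjacent vertices each joined to every vertex of $K_n$. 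Under the isomorphism $\phi\colon A_n\to B_n$ fixing each $u_i$ and sending $w_1\mapsto v_1$, $w_2\mapsto w_1$, consistency forces the joint distribution of the ranks of $(w_1,w_2)$ in $A_n$ to equal that of $(v_1,w_1)$ in $B_n$. Dividing by $n$ and letting $n\to\infty$, so that the normalised ranks converge a.s.\ to the $V$-values of~\eqref{e:V}, yields
\[
(V_{u,w_1},V_{u,w_2})\eqd(V_{u,v_1},V_{u,w_1}).
\]

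Since $V_{u,v_1}=U_{v_1}\sim U(0,1)$ is independent of $V_{u,w_1}=F_{w,u}(U_{w_1})$ (because $U_{v_1}$ is independent of $\cFoo$ and of the other $U$'s), the right-hand pair has independent coordinates with a uniform first entry. Hence the left-hand pair has the same property: $V_{u,w_1}$ and $V_{u,w_2}$ are independent and $U(0,1)$-distributed. On the other hand, by \refL{F}, conditional on $F_{u,w}$ the variables $V_{u,w_1},V_{u,w_2}$ are i.i.d.\ with distribution function $F_{u,w}$. Combining these two facts gives, for every $x\in[0,1]$,
\[
x^2=\Prb(V_{u,w_1}\le x,\,V_{u,w_2}\le x)=\E\bigl[F_{u,w}(x)^2\bigr],
\]
while $\E[F_{u,w}(x)]=x$, so $\Var F_{u,w}(x)=0$. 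Taking a countable union over rationals and using right-continuity of $F_{u,w}$ upgrades this to $F_{u,w}=\mathrm{id}$ a.s.\ as a function. Then \refL{FV}(iii) gives $F_{w,u}=\mathrm{id}$ a.s.\ as well, and part~(v) of \refL{=} delivers $u\equiv w$. Together with $u\equiv v$, \refL{=unif} then produces a uniform ordering on $H_\infty$, completing the application of \refL{allpairs} and showing that $G$ is uniform.

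The main obstacle is the last step, namely passing from the distributional identity $(V_{u,w_1},V_{u,w_2})\eqd(V_{u,v_1},V_{u,w_1})$ to the deterministic conclusion $F_{u,w}=\mathrm{id}$. This requires combining the ``external'' $U(0,1)$ variable $V_{u,v_1}=U_{v_1}$ (coming from $u\equiv v$ and independent of $\cFoo$) with the de~Finetti representation of \refL{F}, so as to force the conditional variance of $F_{u,w}(x)$ to vanish at every $x$.
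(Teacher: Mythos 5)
Your proof is correct and follows essentially the same route as the paper: establish $u\equiv v$ from the homogeneous subtemplate $\{u,v\}$, exploit the isomorphism between $\cW_u\cup\{w_1,w_2\}$ and $\cW_u\cup\{w_1,v_1\}$ to conclude $(V_{u,w_1},V_{u,w_2})\eqd(V_{u,v_1},V_{u,w_1})$, deduce $\Var F_{u,w}(x)=0$ and hence $F_{u,w}=\mathrm{id}$, and then invoke \refL{allpairs}. The only cosmetic difference is that you pass through finite approximants $A_n,B_n$ before taking limits, whereas the paper works directly with the infinite subgraphs; both are valid.
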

\begin{proof}
First, $u\equiv v$ by \refL{homog} applied to the subtemplate \set{u,v}.

The two subgraphs induced by $\cW_u\cup\set{w_1,w_2}$ and
$\cW_u\cup\set{w_1,v_1}$ are isomorphic, by an isomorphism mapping
$w_2\to v_1$ and fixing everything else; thus the distributions of their
induced random orders are mapped to each other by this isomorphism.
Hence, by \eqref{e:V} and \refC{V=U},
\[
 (V_{u,w_1},V_{u,w_2})\eqd(V_{u,w_1},V_{u,v_1})=(V_{u,w_1},U_{v_1}).
\]
Let $x,y\in[0,1]$. By \refL{F},
$\Prb(V_{u,w_1}\le x,\,V_{u,w_2}\le y)=\E(F_{u,w}(x)F_{u,w}(y))$.
Similarly, and also using \refL{UFindep},
$\Prb(V_{u,w_1}\le x,\,U_{v_1}\le y)=\E (F_{u,w}(x))y$. Hence,
\begin{equation}\label{e:eff}
 \E\bigpar{F_{u,w}(x)F_{u,w}(y)}=\E(F_{u,w}(x))y,\qquad x,y\in[0,1].
\end{equation}
Taking $x=1$ in \eqref{e:eff} yields $\E F_{u,w}(y)=y$, and then taking
$x=y$ yields
\[
 \E\bigpar{F_{u,w}(x)^2}=\E(F_{u,w}(x))x=\bigpar{\E F_{u,w}(x)}^2.
\]
Hence $\Var(F_{u,w}(x))=0$, and thus $F_{u,w}(x)=\E F_{u,w}(x)=x$ a.s.
Consequently, $w\equiv u$ by \refL{=}.

We have shown that $w\equiv u\equiv v$. In other words, the ordering
is uniform on the subtemplate induced by \set{u,v,w}.
The result follows from \refL{allpairs},
using the pairs $u,w$ and~$v,w$.
\end{proof}

\begin{lemma}\label{l:FFi}
 Let\/ $F\colon[0,1]\to[0,1]$ be a distribution function on\/ $[0,1]$, and
 let\/ $F^{-1}\colon[0,1]\to[0,1]$ be its right-continuous inverse. If\/ $X$
 and\/ $Y$ are random variables such that\/ $X$ has distribution\/ $F$ and\/
 $Y$ has distribution\/~$F^{-1}$, then
 \[
  \E(X^2)+\E(Y^2)\ge\tfrac23,
 \]
 with equality if and only if\/ $F$ is the uniform distribution\/ $F(x)=x$.
\end{lemma}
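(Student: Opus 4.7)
The plan is to rewrite both second moments as a single double integral over $[0,1]^2$ and to dominate the resulting integrand pointwise by $\max(x,y)$, whose integral is $\tfrac23$.

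First, since $X,Y\in[0,1]$, the tail formula $\E(Z^2)=2\int_0^1 z\,\Prb(Z>z)\,dz$ gives $\E(X^2)=1-2\int_0^1 xF(x)\,dx$ and $\E(Y^2)=1-2\int_0^1 xF^{-1}(x)\,dx$, so the claim is equivalent to
\[
\int_0^1 x\bigl(F(x)+F^{-1}(x)\bigr)\,dx\le\tfrac23,
\]
with equality only when $F(x)=x$. By Fubini, $\int_0^1 xF(x)\,dx=\iint_{[0,1]^2}x\,\id{y<F(x)}\,dx\,dy$. For the companion integral I would use the identity $F^{-1}(x)=\inf\{s:F(s)>x\}$, which gives $y<F^{-1}(x)\iff F(y)\le x$ outside a Lebesgue-null set, and then swap the names of $x$ and $y$ to obtain $\int_0^1 xF^{-1}(x)\,dx=\iint_{[0,1]^2}y\,\id{y\ge F(x)}\,dx\,dy$. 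Since the two indicator regions partition $[0,1]^2$, adding yields
\[
\int_0^1 x\bigl(F(x)+F^{-1}(x)\bigr)\,dx=\iint_{[0,1]^2}\bigl(x\,\id{y<F(x)}+y\,\id{y\ge F(x)}\bigr)\,dx\,dy.
\]

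The crux is then the trivial pointwise bound $x\mathbf 1_A+y\mathbf 1_{A^c}\le\max(x,y)$, valid for any set $A$; combining it with the standard evaluation $\iint_{[0,1]^2}\max(x,y)\,dx\,dy=\tfrac23$ proves the inequality. For equality, matching the pointwise bound at almost every $(x,y)$ with $y<x$ forces $y<F(x)$, whence $F(x)\ge x$ a.e.; matching at almost every $(x,y)$ with $y>x$ forces $y\ge F(x)$, whence $F(x)\le x$ a.e. Together with the right-continuity and monotonicity of $F$, these force $F(x)=x$ on all of $[0,1]$.

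The one technical point I expect to have to check carefully is the Fubini identity for $\int_0^1 xF^{-1}(x)\,dx$ when $F$ has jumps or flat pieces, because then $\{y<F^{-1}(x)\}$ and $\{F(y)\le x\}$ can genuinely differ. Their symmetric difference, however, is a countable union of horizontal and vertical segments (one per jump or plateau of $F$) and therefore has two-dimensional Lebesgue measure zero, so the identity still holds and the rest of the argument goes through unchanged.
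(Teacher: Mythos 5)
Your proof is correct, but it takes a genuinely different route from the paper. The paper's proof observes that if $U\sim U(0,1)$ then $F(U)$ has distribution function $F^{-1}$, so $\E Y^2=\int_0^1 F(x)^2\,dx$; combined with the tail formula $\E X^2=\int_0^1 2x(1-F(x))\,dx$, a completing-the-square identity gives the exact formula $\E X^2+\E Y^2=\tfrac23+\int_0^1(F(x)-x)^2\,dx$, from which both the inequality and the equality case drop out immediately. You instead rewrite both second moments as double integrals over $[0,1]^2$ via Fubini and a change of variables, and dominate the integrand $x\,\id{y<F(x)}+y\,\id{y\ge F(x)}$ pointwise by $\max(x,y)$. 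Both are sound; the null-set issue you flag for $\{y<F^{-1}(x)\}$ versus $\{F(y)\le x\}$ is handled correctly, since their symmetric difference is contained in the graph $\{(x,y):F(y)=x\}$, which is Lebesgue-null. The paper's approach is shorter and returns the explicit error term $\int_0^1(F(x)-x)^2\,dx$ (and, incidentally, integrating your pointwise defect $\max(x,y)-x\,\id{y<F(x)}-y\,\id{y\ge F(x)}$ over the two regions also recovers $\tfrac12\int_0^1(F(x)-x)^2\,dx$, so your argument implicitly contains the same identity). Your version avoids invoking the probability integral transform and has a more transparent geometric flavour, at the modest cost of the a.e.\ bookkeeping and the need to recover $F(x)=x$ everywhere from $F(x)=x$ a.e.\ via right-continuity and monotonicity, which you do correctly.
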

\begin{proof}
Note first the well-known formula
\[
 \E X^2=\E\intoi 2x\,\id{x<X}\,dx=\intoi 2x(1-F(x))\,dx.
\]
Next, if $U\sim U(0,1)$, then $F(U)$ has the distribution function~$F^{-1}$,
so $Y\eqd F(U)$ and thus
\[
 \E Y^2=\E F(U)^2=\intoi F(x)^2\,dx.
\]
Hence,
\begin{align*}
 \E X^2+\E Y^2&=\intoi\bigpar{2x(1-F(x))+F(x)^2}\,dx\\
 &=\intoi(F(x)-x)^2\,dx+\intoi(2x-x^2)\,dx\\
 &=\intoi(F(x)-x)^2\,dx+\frac{2}{3}.
\end{align*}
The result follows.
\end{proof}

\begin{lemma}\label{l:e-f,f}
 Suppose that the template\/ $G$ contains two full vertices\/ $u$ and\/ $v$
 and an empty vertex\/~$w$, with\/ $uw\in E(G)$, $uv,vw\notin E(G)$.
 Then\/ $G$ is uniform.
\end{lemma}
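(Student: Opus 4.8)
The plan is to mimic the structure of the proof of Lemma~\ref{l:e-f-f}, but now the two full vertices $u$ and $v$ are \emph{non-adjacent}, so we cannot start from $u\equiv v$ via \refL{homog}. Instead I would exploit the empty vertex $w$ as a ``probe'' that sees $u$ one way (adjacent) and $v$ the other way (non-adjacent), and extract a distributional identity relating $F_{u,w}$ and $F_{v,w}$. First I would observe the two isomorphisms of induced subgraphs of $\Goo$ that are available: the subgraph on $\cW_w\cup\set{u_1,u_2}$ (with $u$ full, so $u_1u_2$ is an edge, and both $u_i$ adjacent to every $w_k$) and the subgraph on $\cW_w\cup\set{v_1,v_2}$ (with $v$ full, so $v_1v_2$ is an edge, and both $v_i$ non-adjacent to every $w_k$) are \emph{not} isomorphic to each other; that is the point. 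So I would instead look at subgraphs of the form $\cW_w\cup\set{u_1,v_1}$: here $u_1$ is full-type and adjacent to all of $\cW_w$, $v_1$ is full-type and non-adjacent to all of $\cW_w$, and $u_1v_1\notin E$. This single 2-point configuration over the blob $\cW_w$ is enough to read off, via~\eqref{e:V} and \refL{F}, the joint law of $(V_{w,u_1},V_{w,v_1})$ in terms of $F_{w,u}$ and $F_{w,v}$; and by~\eqref{e:vw}--\eqref{e:vw2} the order between $u_1$ and $v_1$ is governed by comparing $V_{w,u_1}$ with $V_{w,v_1}$.

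Next I would bring in \refL{FFi}. The key move is to identify random variables whose distributions are $F$ and $F^{-1}$ for $F=F_{w,u}$ (respectively $F=F_{w,v}$): by \refL{F} and \refL{FV}(iii), $V_{w,u_i}$ has (conditional) distribution $F_{w,u}$ while $V_{u,w_i}$ has (conditional) distribution $F_{u,w}=F_{w,u}^{-1}$. So $\E(V_{w,u_1}^2)+\E(V_{u,w_1}^2)\ge \tfrac23$, with equality iff $F_{w,u}(x)=x$ a.s., and similarly for the pair $(w,v)$. The strategy is then to produce a \emph{matching upper bound} of $\tfrac43$ for the sum of all four second moments $\E(V_{w,u_1}^2)+\E(V_{u,w_1}^2)+\E(V_{w,v_1}^2)+\E(V_{v,w_1}^2)$, forcing equality in both applications of \refL{FFi} and hence $F_{w,u}(x)=F_{w,v}(x)=x$ a.s., i.e.\ $w\equiv u$ and $w\equiv v$. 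To get that upper bound I would use the constraint coming from the orderings: because $u_1$ and $v_1$ are non-adjacent, $w$ empty, and using the isomorphism that swaps the roles appropriately (e.g.\ the subgraph on $\cW_w\cup\set{u_1}$ with $u_1$ adjacent to all of $\cW_w$ is isomorphic to $\cW_w\cup\set{v_1}$ only after complementation within $\set{u_1}$-vs-$\cW_w$, which fails — so more carefully I would pair $\set{u_1,w_1}$ against $\set{v_1,w_1}$ via the subtemplate isomorphism $\set{u,w}\cong$ ``full adjacent empty'' and $\set{v,w}\cong$ ``full non-adjacent empty,'' which are \emph{different}, so this is where I must be careful). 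The cleaner route is: use that $V_{w,u_1}$ and $V_{w,v_1}$ together with $U_{w_1}=V_{w,w_1}$ give ranks inside $\cW_w$, so $\E(V_{w,u_1}^2)+\E(V_{w,v_1}^2)$ relates to $\E(F_{w,u}(x)^2)$ and $\E(F_{w,v}(x)^2)$ integrated, while the ``inverse'' second moments $\E(V_{u,w_1}^2)$, $\E(V_{v,w_1}^2)$ are $\int_0^1 F_{w,u}(x)^2\,dx$-type quantities; the algebraic identity in \refL{FFi} then does the bookkeeping once I feed in the correct pairing.

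The main obstacle I anticipate is exactly pinning down \emph{which} isomorphisms between induced subgraphs of $\Goo$ are legitimately available here: unlike Lemma~\ref{l:e-f-f}, there is no adjacency between $u$ and $v$ to start the chain, so the relation $u\equiv v$ (or $w\equiv u$) must be manufactured entirely from the interaction of each full vertex with the single empty vertex $w$, and the asymmetry (one adjacent, one not) is both the source of the difficulty and the reason the argument closes. Once I have $w\equiv u$ and $w\equiv v$ — equivalently the ordering is uniform on the subtemplate induced by $\set{u,v,w}$, which by \refL{=} and \refC{equiv} also gives $u\equiv v$ — the lemma follows immediately from \refL{allpairs} applied with the pairs $u,w$ (full–empty, adjacent, so $uv\in E$ in the notation of that lemma with $u$ full, $w$ empty, $uw\in E$) and $v,w$ (full–empty, non-adjacent, $vw\notin E$), since $H=G[\set{u,v,w}]$ then contains both a full–empty edge and a full–empty non-edge as required. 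I expect the core computation (the matching upper bound on the four second moments) to be a short integral identity once the right random variables are matched up, so the real work is setting up the correct subgraph isomorphisms, not grinding the calculus.
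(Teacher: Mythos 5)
Your endgame (force equality in \refL{FFi}, deduce $w\equiv u$ and $w\equiv v$, finish with \refL{allpairs} on the pairs $u,w$ and $v,w$) agrees with the paper's, but the proposal has a genuine gap at its centre: the ``matching upper bound of $\tfrac43$'' on $\E(V_{w,u_1}^2)+\E(V_{u,w_1}^2)+\E(V_{w,v_1}^2)+\E(V_{v,w_1}^2)$ is never actually produced, and it is precisely the hard part. Two of the four moments equal $\tfrac13$ for free: $\cW_u\cup\set{w_1}$ is an infinite clique and $\cW_w\cup\set{v_1}$ an infinite empty graph, so $V_{u,w_1}$ and $V_{w,v_1}$ are uniform. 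Hence \refL{FFi} applied to the pairs $(w,u)$ and $(w,v)$ only yields the \emph{lower} bounds $\E(V_{w,u_1}^2)\ge\tfrac13$ and $\E(V_{v,w_1}^2)\ge\tfrac13$; you still need the reverse inequalities, and the isomorphism you reach for (between the subtemplates $\set{u,w}$ and $\set{v,w}$) does not exist, as you yourself note --- one pair is joined and the other is not. The ``cleaner route'' paragraph does not supply a substitute: rewriting the second moments as integrals of $F_{w,u}^2$ and $F_{v,w}^2$ is bookkeeping, not a new inequality, and $F_{w,u}$ and $F_{v,w}$ are a priori unrelated random distribution functions.

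The paper closes the gap differently: it applies \refL{FFi} to the pair of \emph{full} blobs $(u,v)$, not to $(w,u)$ and $(w,v)$, and manufactures the matching upper bound $\E(V_{u,v_1}^2)\le\tfrac13$ by a projection argument resting on two isomorphism facts absent from your sketch: (a) in $\cW_u\cup\cW_v$ one may substitute $w_1$ for $u_1$, since $w_1$ is adjacent to every vertex of $\cW_u\setminus\set{u_1}$ and to no vertex of $\cW_v$, exactly as $u_1$ is; this gives $\E(F_{u,w}(x)\mid F_{v,u})=x$ and hence $\E\bigpar{F_{u,w}(V_{u,v_1})\mid F_{v,u},U_{v_1}}=V_{u,v_1}$; and (b) $\cW_w\cup\set{v_1}$ is homogeneous, so $\tU:=F_{u,w}(V_{u,v_1})$ is uniform. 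Then $\tfrac13=\E(\tU^2)=\E(\tU-V_{u,v_1})^2+\E(V_{u,v_1}^2)\ge\E(V_{u,v_1}^2)$, the swap symmetry of the two disjoint cliques $\cW_u\cup\cW_v$ gives the same bound for $V_{v,u_1}$, and equality in \refL{FFi} forces $F_{u,v}=F_{v,u}=\mathrm{id}$ and, as a by-product, $F_{u,w}=\mathrm{id}$. The missing idea in your proposal is this step: exhibiting a uniform random variable whose conditional mean is the quantity to be bounded. Without it, the upper bound you need does not follow from anything you have written down.
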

\begin{proof}
Let $\cW_u':=\cW_u\setminus\set{u_1}$. There is an isomorphism between
$\cW_u'\cup\set{w_1}\cup\cW_v$ and $\cW_u\cup\cW_v$ fixing $\cW_u'\cup\cW_v$
and sending $w_1$ to~$u_1$.
It follows that $V_{u,w_1}\eqd U_{u_1}$, even when conditioned on the order in
$\cW_u'\cup\cW_v$. Since $F_{v,u}$ is determined by the order in
$\cW_u'\cup\cW_v$, it follows for any $x\in[0,1]$, using also \refL{UFindep}
and \refR{Venough}, that
\[
 \E(F_{u,w}(x)\mid F_{v,u})=\Prb(V_{u,w_1}\le x\mid F_{v,u})
 =\Prb(U_{u_1}\le x\mid F_{v,u})=\Prb(U_{u_1}\le x)=x.
\]
Since $V_{u,v_1}=F_{v,u}(U_{v_1})$ by \eqref{e:V=FU}, and $U_{v_1}$ is
independent of \set{F_{u,w},F_{u,v}}, it follows that
\begin{equation}\label{e:sw}
 \E(F_{u,w}(V_{u,v_1})\mid F_{v,u},U_{v_1})
 =\E(F_{u,w}(F_{v,u}(U_{v_1})))\mid F_{v,u},U_{v_1})
 =F_{v,u}(U_{v_1})=V_{u,v_1}.
\end{equation}
Next, note that by the same isomorphism,
$\Prb(V_{u,v_1}=V_{u,w_1})=\Prb(V_{u,v_1}=U_{u_1})=0$, since $U_{u_1}$ is
continuous and independent of~$V_{u,v_1}$.
By symmetry, a.s.\
$V_{u,v_1}\ne V_{u,w_j}$ for every~$j$, and thus by \refR{Venough},
these random variables determine the order between $v_1$ and~$w_j$.
It follows that, a.s., using~\eqref{e:F},
\begin{equation}\label{e:ma}
 F_{u,w}(V_{u,v_1})=\lim_\ntoo\frac1n\sum_{i=1}^n\id{V_{u,w_i}\le V_{u,v_1}}
 =\lim_\ntoo\frac1n\sum_{i=1}^n\id{w_i< v_1}.
\end{equation}
However, $\cW_w\cup\set{v_1}$ is an infinite empty graph, isomorphic to~$\cW_w$,
and by \refL{Uiid} and \eqref{e:U}, the r.h.s.\ has a uniform distribution.
Thus $\tU:=F_{u,w}(V_{u,v_1})\sim U(0,1)$, and by \eqref{e:sw},
$\E(\tU\mid F_{v,u},U_{v_1})=V_{u,v_1}$. Consequently,
\begin{equation}\label{e:var}
 \tfrac13=\E(\tU^2)=\E(\tU-V_{u,v_1})^2+\E V_{u,v_1}^2\ge\E V_{u,v_1}^2.
\end{equation}
By the obvious isomorphism of $\cW_u\cup\cW_v$ interchanging $\cW_u$
and~$\cW_v$, $V_{v,u_1}\eqd V_{u,v_1}$, so $\E V_{v,u_1}^2\le\frac13$ too.

Conditioned on $F_{u,v}$ and $F_{v,u}=F_{u,v}^{-1}$, $V_{u,v_1}$ and
$V_{v,u_1}$ have distributions $F_{u,v}$ and $F_{v,u}$, and thus \refL{FFi}
applies and yields
\begin{equation}\label{e:varp}
 \E(V_{u,v_1}^2+V_{v,u_1}^2\mid F_{u,v})\ge\tfrac23.
\end{equation}
Thus, taking the expectation,
\begin{equation}\label{e:var2}
 \E(V_{u,v_1}^2+V_{v,u_1}^2)\ge\tfrac23.
\end{equation}

Consequently, there must be equality in both \eqref{e:var} and~\eqref{e:var2},
and thus a.s.\ in~\eqref{e:varp}. By \refL{FFi}, this implies that
$F_{v,u}(x)=F_{u,v}(x)=x$ a.s. Furthermore, by~\eqref{e:var},
$F_{u,w}(V_{u,v_1})=\tU=V_{u,v_1}$ a.s., where
$V_{u,v_1}=F_{v,u}(U_{v_1})=U_{v_1}$ is independent of~$F_{u,w}$, and thus
$F_{u,w}(x)=x$. Thus $v\equiv u\equiv w$ by \refL{=}.

This shows that the ordering is uniform on the subgraph of $G$ induced
by \set{u,v,w}. Finally, $G$ is uniform by \refL{allpairs} applied to the pairs $u,w$
and~$v,w$.
\end{proof}

\begin{lemma}\label{l:f-e-f}
 Suppose that the template\/ $G$ contains two full vertices\/ $u$ and\/~$v$,
 and one empty vertex\/~$w$, with\/ $uw,vw\in E(G)$ and\/ $uv\notin E(G)$.
 Then\/ $G$ is uniform.
\end{lemma}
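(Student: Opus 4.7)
The plan is to follow the overall strategy of the proof of \refL{e-f,f}: first establish that the consistent random ordering on $\Goo$ restricted to $\cW_u \cup \cW_v \cup \cW_w$ is uniform --- equivalently, that the random distribution functions $F_{u,v}$, $F_{u,w}$, $F_{v,w}$ are almost surely the identity on $[0,1]$ --- and then extend uniformity to all of $G$.

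For the first step, I would observe that for every $n\ge 1$ the induced subgraph of $\Goo$ on $\set{u_1,\dots,u_n,w_1}$ is a clique $K_{n+1}$ (since $\cW_u$ is a clique and $uw\in E(G)$), and therefore is isomorphic via $w_1\mapsto u_{n+1}$ (identity elsewhere) to the clique on $\set{u_1,\dots,u_{n+1}}$. By consistency, the ordering on this $K_{n+1}$ is uniform; letting $n\to\infty$ shows that $V_{u,w_1}$ is marginally $U(0,1)$, i.e., $\E F_{u,w}(x)=x$ for every $x\in[0,1]$. The $u\leftrightarrow v$ symmetry of $\Goo$ (an automorphism, since $uw$ and $vw$ play symmetric roles and $uv\notin E$) gives the same identity for $F_{v,w}$.

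Next, mirroring the Cauchy--Schwarz argument of \refL{e-f,f}, the aim is to show $\E V_{u,v_1}^2\le 1/3$. By the $u\leftrightarrow v$ symmetry the same bound holds for $\E V_{v,u_1}^2$, and \refL{FFi} applied to the pair $(V_{u,v_1},V_{v,u_1})$ (whose conditional distributions given $F_{u,v}$ are $F_{u,v}$ and $F_{u,v}^{-1}=F_{v,u}$ respectively) yields $\E V_{u,v_1}^2+\E V_{v,u_1}^2\ge 2/3$ with equality forcing $F_{u,v}(x)=x$ a.s. Analogous arguments applied to the pairs $(u,w)$ and $(v,w)$ should then give $F_{u,w}(x)=x$ and $F_{v,w}(x)=x$ a.s., completing uniformity on $\cW_u\cup\cW_v\cup\cW_w$. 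If $G=\set{u,v,w}$ we are done; otherwise, \refL{allpairs} can be applied after enlarging the uniform subtemplate by any additional $z\in\cV(G)$, a short case analysis on the adjacencies of $z$ yielding the required full-empty non-edge pair (which is absent from $\set{u,v,w}$ itself).

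The main obstacle is deriving the bound $\E V_{u,v_1}^2\le 1/3$. The two key isomorphisms driving the proof of \refL{e-f,f} --- the swap $\cW_u'\cup\set{w_1}\cup\cW_v\to\cW_u\cup\cW_v$ sending $w_1\mapsto u_1$, and the identification $\cW_w\cup\set{v_1}\cong\cW_w$ --- both rely crucially on $vw\notin E$, which now fails. A promising substitute is the cocycle-type identity
\[
F_{u,w}=F_{v,w}\circ F_{u,v}\quad\text{a.s.,}
\]
obtained by computing $V_{w,u_1}$ in two ways: directly as $F_{u,w}(U_{u_1})$ via \eqref{e:V=FU}, and also as $F_{v,w}(V_{v,u_1})=F_{v,w}(F_{u,v}(U_{u_1}))$, using that $V_{v,w_i}<V_{v,u_1}\iff w_i<u_1$ from \eqref{e:vw}--\eqref{e:vw2}. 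Combined with the marginal identities $\E F_{u,w}(x)=\E F_{v,w}(x)=x$ from the first step, this cocycle structure should provide enough control to deduce the Cauchy--Schwarz bound via a suitable conditional-expectation argument --- but filling in that step is the crux.
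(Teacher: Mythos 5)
Your first step is correct and matches the paper: $\cW_u\cup\set{w_1}$ is an infinite clique, so $V_{u,w_1}\sim U(0,1)$ and hence $\E F_{u,w}(x)=x$; by the $u\leftrightarrow v$ symmetry, $\E F_{v,w}(x)=x$ too. Your framework (Cauchy--Schwarz via \refL{FFi}, then extend to all of $G$ by cases) is also the right shape. But the gap you flag — obtaining $\E V_{u,v_1}^2\le\tfrac13$ — is genuine, and the cocycle identity you propose does not close it. That identity (which, incidentally, requires the continuity argument of \refL{FV}(ii) and is not automatic in the presence of atoms in $F_{v,w}$) merely re-expresses $V_{w,u_1}$; it gives no distributional input about $V_{u,v_1}$, precisely because both substitutes for the key isomorphisms of \refL{e-f,f} are unavailable when $vw\in E(G)$, as you observe.

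The idea you are missing is simpler and does not go through $F_{u,v}$ at all. Look at the induced subgraph of $\Goo$ on $\set{w_1,w_2,u_1,v_1}$: since $w$ is empty and $uv\notin E(G)$, the non-edges are $w_1w_2$ and $u_1v_1$, while $uw,vw\in E(G)$ force all four cross-edges; this is a $4$-cycle, and the map $w_1\leftrightarrow u_1$, $w_2\leftrightarrow v_1$ is an automorphism of it. Consistency then gives the exact identity
\[
\Prb(w_1,w_2<u_1)=\Prb(u_1,v_1<w_1).
\]
The left-hand side equals $\int_0^1\E\bigpar{F_{u,w}(x)^2}\,dx\ge\int_0^1 x^2\,dx=\tfrac13$ by Cauchy--Schwarz, using $\E F_{u,w}(x)=x$. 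The right-hand side equals $\E\bigpar{F_{w,u}(U_{w_1})F_{w,v}(U_{w_1})}\le\bigpar{\E F_{w,u}(U_{w_1})^2}^{1/2}\bigpar{\E F_{w,v}(U_{w_1})^2}^{1/2}=\tfrac13$, again by Cauchy--Schwarz, using $F_{w,u}(U_{w_1})=V_{u,w_1}\sim U(0,1)$. Equality throughout forces $F_{u,w}(x)=x$ a.s., hence $w\equiv u$, and by symmetry $w\equiv v$; then $u\equiv v$ by transitivity (\refC{equiv}), and the remaining vertices $z$ are absorbed by a short case analysis using \refL{2pairs}, \refL{homog}, and (in one case) \refL{e-f,f} applied to the complement. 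Your version of the final extension step via \refL{allpairs} is not quite available as stated, since the subtemplate on $\set{u,v,w}$ contains no pair consisting of a full and an empty vertex that are non-adjacent — \refL{allpairs} requires both an adjacent and a non-adjacent full--empty pair within $H$ — so you would need the paper's more explicit case analysis instead.
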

\begin{proof}
The induced subgraph of $\Goo$ with vertex set $\set{w_1,w_2,u_1,v_1}$ has an
isomorphism $w_1\leftrightarrow u_1$, $w_2\leftrightarrow v_1$. Hence, the
assumption that the random order of $\Goo$ is consistent implies
\begin{equation}\label{e:c0}
 \Prb(w_1,w_2<u_1)=\Prb(u_1,v_1<w_1).
\end{equation}
By \eqref{e:vw}--\eqref{e:vw2}, and since $\set{V_{u,w_i}}_i$ are independent
of $U_{u_1}$ by \refL{UFindep} and \eqref{e:V=FU} (or \refR{Venough}),
\begin{equation}\label{e:c1}
 \Prb(w_1,w_2<u_1)=\Prb(V_{u,w_1},V_{u,w_2}<U_{u_1})
 =\E(F_{u,w}(U_{u_1})^2)=\E\intoi F_{u,w}(x)^2\,dx.
\end{equation}
Furthermore, $\set{w_1}\cup\cW_u$ is an infinite complete graph, and thus
$V_{u,w_1}\eqd U_{u_i}\sim U(0,1)$, see \refL{Uiid}. Thus, for $x\in[0,1]$,
\[
 x=\Prb(V_{u,w_1}\le x)=\E F_{u,w}(x).
\]
Consequently, by \eqref{e:c1} and the Cauchy--Schwarz inequality,
\begin{equation}\label{e:c2}
 \Prb(w_1,w_2<u_1)=\intoi\E(F_{u,w}(x)^2)\,dx\ge
 \intoi(\E F_{u,w}(x))^2\,dx=\intoi x^2\,dx=\tfrac13.
\end{equation}
On the other hand, again using the Cauchy--Schwarz inequality,
\begin{align}
 \Prb(u_1,v_1<w_1)&=\Prb(V_{w,u_1},V_{w,v_1}<U_{w_1})
 =\E(F_{w,u}(U_{w_1})F_{w,v}(U_{w_1}))\notag\\
 &\le\bigpar{\E(F_{w,u}(U_{w_1})^2)}^{1/2}
 \bigpar{\E(F_{w,v}(U_{w_1})^2)}^{1/2}.\label{e:c3}
\end{align}
By \eqref{e:V=FU}, $F_{w,u}(U_{w_1})=V_{u,w_1}$ and, as noted above,
$V_{u,w_1}\sim U(0,1)$. Hence we deduce that $\E(F_{w,u}(U_{w_1})^2)=\frac13$.
Similarly, by symmetry, $\E(F_{w,v}(U_{w_1})^2)=\frac13$. Consequently,
\eqref{e:c3} yields
\begin{equation}\label{e:c4}
 \Prb(u_1,v_1<w_1)\le\tfrac13.
\end{equation}
By \eqref{e:c0}, we thus must have equality in both \eqref{e:c2}
and~\eqref{e:c3}. The equality in \eqref{e:c2} implies that for a.e.~$x$,
$F_{u,w}(x)=\E F_{u,w}(x)=x$ a.s., which implies that a.s.\ $F_{u,w}(x)=x$ for
all $x\in[0,1]$. Hence $w\equiv u$ by \refL{=}.
By symmetry, $w\equiv v$ also.

Suppose $z$ is any full vertex of~$G$. If $uz\in E(G)$ then $z\equiv u$
by \refL{homog}, while if $uz\notin E(G)$ then $z\equiv u$ by applying
\refL{2pairs} to $\set{u,v}$ and $\set{u,z}$. Now suppose $z$ is an empty
vertex of~$G$. If $zu\in E(G)$ then $z\equiv w\equiv u$ by applying
\refL{2pairs} to $\set{u,w}$ and $\set{u,z}$. If $zw\notin E(G)$ then
$z\equiv w\equiv u$ by applying \refL{homog} to $\set{z,w}$. Finally, if
$zw\in E(G)$ and $zu\notin E(G)$ then we deduce that $\bG$, and hence~$G$, is
uniform by applying \refL{e-f,f} to \set{z,u,w} in~$\bG$. In all cases we see
that $z\equiv u$. Hence $G$ is uniform by \refL{=unif}.
\end{proof}

\begin{lemma}\label{l:f-f-f}
 Suppose that the subgraph of\/ $G$ induced by the set of full vertices has
 a component that is not a clique. Then\/ $G$ is uniform.
\end{lemma}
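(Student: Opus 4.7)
The assumption gives an induced three-vertex path among the full vertices, i.e., full $u, v, w$ with $uw, vw \in E(G)$ and $uv \notin E(G)$. My plan is to show $u \equiv v \equiv w$ and then verify $z \equiv u$ for every remaining $z \in \cV$, concluding via \refL{=unif}. The pairs $\{u, w\}$ and $\{v, w\}$ are complete full subtemplates, uniform by \refL{homog}, so $u \equiv w$ and $v \equiv w$; transitivity (\refC{equiv}) gives $u \equiv v$.

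If $z \in \cV$ is a full vertex distinct from $u$, then $\{u, z\}$ is either a complete full pair (uniform by \refL{homog}) or two non-adjacent full vertices, isomorphic as a subtemplate to $\{u, v\}$, whence \refL{2pairs} applied with $u \equiv v$ gives $z \equiv u$. If $z$ is empty with adjacency pattern $(\id{zu \in E}, \id{zv \in E}, \id{zw \in E}) \neq (0, 0, 0)$, a short case check shows that some three-vertex subtemplate of $\{u, v, w, z\}$ matches the hypothesis of \refL{e-f-f}, \refL{e-f,f}, or \refL{f-e-f}, and $G$ is uniform outright.

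The main difficulty is the remaining case: $z$ empty and isolated from $\{u, v, w\}$. Here no earlier three-vertex lemma applies and the pair $\{u, z\}$ alone is non-uniform (cf.\ \refE{unionKn}), so a global argument is needed. I take the induced subgraphs $H_1 := \cW_u \cup \{z_1, z_2\}$ and $H_2 := \cW_u \cup \{v_1, z_1\}$ of $\Goo$. Since $uz, uv, vz \notin E(G)$ and $z$ is empty, each of $H_1, H_2$ consists of a clique $K_\infty$ together with two further vertices that are isolated from $\cW_u$ and non-adjacent to each other, so the bijection fixing $\cW_u$ and sending $z_1 \mapsto v_1$, $z_2 \mapsto z_1$ is an isomorphism. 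Consistency yields $(V_{u, z_1}, V_{u, z_2}) \eqd (V_{u, v_1}, V_{u, z_1})$.

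Now $V_{u, v_1} = U_{v_1}$ by $u \equiv v$ and \refC{V=U}, while $V_{u, z_1} = F_{z, u}(U_{z_1})$ by \refL{FV}(ii). Since $F_{z, u}$ is $\cFoo$-measurable (\refL{Ftail}), Lemmas \ref{l:UUiid} and \ref{l:UFindep} imply that $U_{v_1}$ is jointly independent of $(U_{z_1}, F_{z, u})$, hence of $V_{u, z_1}$. Transporting this across the equality in distribution, $V_{u, z_1}$ and $V_{u, z_2}$ are independent and each $U(0, 1)$; combined with the conditional-iid statement of \refL{F}, this forces $\Var(F_{u, z}(x)) = 0$ for each $x \in [0,1]$, so $F_{u, z}(x) = x$ a.s., and \refL{=} yields $z \equiv u$.
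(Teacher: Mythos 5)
Your argument is correct and follows essentially the same route as the paper: establish $u\equiv v\equiv w$ via \refL{homog} and transitivity, propagate to other full vertices via \refL{2pairs}, dispose of non-isolated empty $z$ by finding a three-vertex subtemplate to which one of Lemmas \ref{l:e-f-f}, \ref{l:e-f,f}, \ref{l:f-e-f} applies, and finish the isolated empty case with a degenerate-variance argument. The one place you genuinely diverge is in the isolated case: the paper works inside $\cW_z$, using the isomorphism $\cW_z\cong\cW_z'\cup\set{u_1,v_1}$ to get $(V_{z,u_1},V_{z,v_1})\eqd(U_{z_1},U_{z_2})$ and then exploits $F_{z,u}=F_{z,v}$ (from $u\equiv v$) to force $\E F_{z,u}(x)^2=x^2$; you instead work inside $\cW_u$, using $\cW_u\cup\set{z_1,z_2}\cong\cW_u\cup\set{v_1,z_1}$ to get $(V_{u,z_1},V_{u,z_2})\eqd(U_{v_1},V_{u,z_1})$ and exploit $V_{u,v_1}=U_{v_1}$ together with the independence of $U_{v_1}$ from $\cFoo$ and the other $U$'s. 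Both variants land on $\Var(F(x))=0$ and hence $z\equiv u$; the computations are of comparable length, so neither buys much over the other, but yours is a clean alternative. A very minor cosmetic difference: the paper's Case~2 ($uz,vz\notin E$) does not assume $wz\notin E$, so it covers the pattern $(0,0,1)$ by the variance argument rather than by \refL{e-f-f} as you do — both are fine.
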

\begin{proof}
The assumption implies that there exist three full vertices $u$, $v$, $w$
in $G$ with $uw,vw\in E(G)$, but $uv\notin E(G)$. By \refL{homog}, $w\equiv u$
and $w\equiv v$. For any full vertex $z\ne v,w$, either the template induced
by \set{w,z} is isomorphic to that induced by \set{u,v} or that induced by
\set{u,w}. Hence $z\equiv w$ for every full vertex~$z$.

Now let $z$ be an empty vertex. We consider two cases.

\pfcase{1}{Either\/ $uz\in E(G)$ or\/ $vz\in E(G)$.}
In this case $G$ is uniform by either \refL{e-f,f} or \refL{f-e-f} applied to
the subtemplate \set{u,v,z}.

\pfcase{2}{$uz,vz\notin E(G)$.}
Let $\cW_z':=\cW_z\setminus\set{z_1,z_2}$. In this case, the subgraphs of
$\Goo$ induced by $\cW_z$ and $\cW_z'\cup\set{u_1,v_1}$ are isomorphic, by an
isomorphism fixing~$\cW_z'$. Again, using that the random order is consistent,
it follows by \eqref{e:V} that
\[
 (V_{z,u_1},V_{z,v_1})\eqd(V_{z,z_1},V_{z,z_2})=(U_{z_1},U_{z_2}).
\]
Hence, arguing as in the proof of \refL{e-f-f}, for $x\in[0,1]$,
$\E F_{z,u}(x)=\Prb(V_{z,u_1}\le x)=\Prb(U_{z_1}\le x)=x$ and
\[
 \E(F_{z,u}(x)F_{z,v}(x))=\Prb(V_{z,u_1}\le x,\,V_{z,v_1}\le x)
 =\Prb(U_{z_1}\le x,\,U_{z_2}\le x)=x^2.
\]
Furthermore, $F_{z,u}=F_{z,v}$ a.s., by \refL{=} since $u\equiv v$.
Consequently, we have $\E(F_{z,u}(x)^2)=(\E F_{z,u}(x))^2$, and thus
$F_{z,u}(x)=\E F_{z,u}(x)=x$ a.s. Hence $z\equiv u$ by \refL{=}.

In both cases we see that $z\equiv u$. Hence $G$ is uniform by \refL{=unif}.
\end{proof}

We call a template $G$ {\em reduced\/} if if contains no adjacent twin full
vertices, and no non-adjacent twin empty vertices. Clearly any adjacent twin
full vertices or non-adjacent twin empty vertices can be merged in an
non-reduced template $G$ without affecting $\Goo$ and hence without affecting
whether or not $G$ is uniform. Merging all such twins results in a reduced
template, so it is enough to consider just these.

\begin{theorem}\label{t:template}
 If\/ $G$ is a non-uniform reduced template, then\/ $G$ is either an empty
 graph\/ $($with at most one empty vertex$)$ or complete\/ $($with at most one
 full vertex$)$. In particular, for any non-uniform template\/~$G$, $\Goo$
 is either a disjoint union of cliques or a complete multipartite graph.
\end{theorem}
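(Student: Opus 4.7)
The plan is to combine the three-vertex lemmas \refL{e-f-f}, \refL{e-f,f}, \refL{f-e-f}, and \refL{f-f-f} with their complement versions (via \refR{comp} applied to $\bG$) and the reduced hypothesis to pin down the structure of any non-uniform reduced template $G$, writing $F$ and $E$ for the sets of full and empty vertices respectively.

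First I will show that $F$ is an independent set of $G$ and dually $E$ is a clique. By \refL{f-f-f}, $F$ induces a disjoint union of cliques; two vertices $u,v$ lying in the same such clique share the full-neighborhood in $V(G)\setminus\set{u,v}$, so either they share the empty-neighborhood as well---making them adjacent twin full vertices, forbidden by reducedness---or they are distinguished by some empty vertex $w$, in which case \refL{e-f-f} applies to $\set{u,v,w}$ and forces $G$ uniform, a contradiction. Applying the identical argument to $\bG$, which is again non-uniform and reduced by \refR{comp}, shows $E$ is a clique in $G$.

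Next I rule out $|F|\ge 2$ and $|E|\ge 2$. If some pair $(f,e)\in F\times E$ is an edge of $G$, pick any other full $f'\ne f$; then $ff'\notin E(G)$ and $\set{f,f',e}$ satisfies either \refL{f-e-f} (if $f'e\in E(G)$) or \refL{e-f,f} (if $f'e\notin E(G)$), forcing $G$ uniform. Otherwise every $F\times E$ pair is a non-edge of $G$ and hence an edge of $\bG$, so the identical argument run in $\bG$ (whose full vertex set $E$ is independent in $\bG$ by the previous step) yields a contradiction. Hence $|F|\le 1$ or $|E|\le 1$.

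Finally I analyse $|F|=1$ (unique full vertex $u$) with $|E|\ge 2$. If $u$ were adjacent in $G$ to some empty $w_1$ but not to another empty $w_2$ (noting $w_1w_2\in E(G)$ since $E$ is a clique), then in $\bG$ the vertices $w_2,w_1$ are full and $u$ is empty with $w_2u\in E(\bG)$, $w_2w_1\notin E(\bG)$, $w_1u\notin E(\bG)$, so \refL{e-f,f} applies to $\bG$ and forces $G$ uniform; and if $u$ were adjacent to no empty, $\set{w_1,w_2,u}$ in $\bG$ satisfies \refL{f-e-f}, again forcing $G$ uniform. Hence $u$ is adjacent to every empty vertex and $G$ is complete. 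The case $|F|\ge 2$, $|E|=1$ is symmetric via \refR{comp} and yields $G$ empty, while $|F|,|E|\le 1$ is immediate. This establishes the classification; the ``In particular'' statement then follows since any non-uniform template has the same $\Goo$ as its reduction, and an empty reduced template with at most one empty vertex blows up to a disjoint union of infinite cliques together with isolated vertices, whereas a complete reduced template with at most one full vertex blows up to a complete multipartite graph (treating each vertex of a full blob, if any, as its own singleton part). The main obstacle is simply the bookkeeping of matching the edge configurations among three chosen vertices to the precise hypotheses of the four lemmas, sometimes in $G$ and sometimes in $\bG$; once the structural conclusions ``$F$ independent, $E$ clique'' are in hand, every remaining configuration either lies in the final classification or is killed by one such lemma application.
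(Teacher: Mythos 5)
Your proof is correct and covers the same lemma-driven territory, but it organises the argument differently from the paper. The paper first uses \refL{e-f-f} and \refL{e-f,f} to establish that every empty vertex is joined to \emph{all} or \emph{none} of the full vertices (and, by \refR{comp}, dually for full vertices); since a non-uniform template must then have a trivial full--empty bipartite structure (all edges present or all absent), this permits a clean WLOG --- pass to the complement so that every full--empty pair is an edge --- after which \refL{f-f-f} plus reducedness gives $F$ independent and $E$ a clique, and a single application of \refL{f-e-f} finishes. You dispense with that WLOG: you derive ``$F$ independent, $E$ clique'' directly, using \refL{f-f-f} and reducedness, with \refL{e-f-f} brought in only to kill the subcase where two same-clique full vertices are distinguished by an empty vertex (a nice observation the paper does not spell out at this point, because the WLOG makes it automatic); then you run an explicit case analysis on $|F|$, $|E|$ and on the individual full--empty edges, invoking \refL{f-e-f} and \refL{e-f,f} in $G$ or $\bG$ as each configuration requires. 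The paper's version is a bit slicker, your version more hands-on; both are valid and lean on the same four three-vertex lemmas. Two small remarks: your enumerated cases ($|F|,|E|\ge 2$; $|F|=1,|E|\ge2$; $|F|\ge2,|E|=1$; $|F|,|E|\le1$) leave out $|F|=0,|E|\ge2$ and $|F|\ge2,|E|=0$, but these are immediate from your Step 1 ($E$ a clique gives $G$ complete with no full vertex, $F$ independent gives $G$ empty with no empty vertex), so the gap is cosmetic. The ``in particular'' argument --- reduction preserves $\Goo$, an empty template with at most one empty vertex blows up to a union of cliques, a complete template with at most one full vertex blows up to a complete multipartite graph with the full blob split into singleton parts --- matches the paper's intent and is correct.
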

\begin{proof}
By Lemmas~\ref{l:e-f-f} and \ref{l:e-f,f}, any empty vertex must be joined to
either all the full vertices, or none of them. By taking complements we also
have that each full vertex is either joined to all empty vertices or none of
them. Thus either all full vertices are joined to all empty vertices, or no
full vertex is joined to any empty vertex. Without loss of generality (taking
complements if necessary), we may assume that every full vertex is joined
to every empty vertex.

By \refL{f-f-f}, the subgraph of $G$ induced by the full vertices consists of
a disjoint union of cliques. Since we assume $G$ is reduced and any two full
vertices
in a clique of full vertices would be adjacent twins, we deduce that no two
full vertices are adjacent. Similarly, applying \refL{f-f-f} to the complement
of~$G$, we may assume any two empty vertices are adjacent.

If $G$ contained at least two full vertices and at least one empty vertices,
then $G$ would be uniform by \refL{f-e-f}. Hence we deduce that either there
is no empty vertex, and $G$ is an empty graph of full vertices; or there is at
most one full vertex and $G$ is a complete graph consisting of empty vertices
and at most one full vertex.
\end{proof}

\begin{lemma}\label{l:3}
Suppose that\/ $G$ is a template and that\/ $\Goo$ has a consistent random
order such that for any three vertices\/ $u,v,w\in V(\Goo)$, the induced
random ordering on\/ \set{u,v,w} is uniform. Then the ordering is uniform.
\end{lemma}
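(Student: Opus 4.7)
My plan is to use \refT{uniform} to reduce the claim to showing, for every pair of distinct vertices $u,v\in\cV$, that the random distribution function $F:=F_{v,u}$ is almost surely the identity on $[0,1]$; the case $u=v$ is covered by \refR{u=v} and the case $|\cV|=1$ is trivial via \refL{homog}. So I fix distinct $u,v\in\cV$ and aim to show $F\equiv\mathrm{id}$ a.s.

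To pin $F$ down I will probe it using the 3-vertex uniformity hypothesis applied to the triples $\{v_1,v_2,u_1\}$ and $\{v_1,u_1,u_2\}$ inside $\cW_v\cup\cW_u$. The key inputs are \refL{FV}(ii), which gives $V_{u,v_i}=F(U_{v_i})$ and $V_{u,u_j}=U_{u_j}$, and \refL{UUiid} together with \refL{UFindep}, which say that the $U$'s are i.i.d.\ $U(0,1)$ and are jointly independent of $F$. Since $U_{u_j}$ is continuously distributed and independent of the other $V$-values, \eqref{e:vw}--\eqref{e:vw2} become strict equivalences almost surely, and each three-vertex order event rewrites as an event about i.i.d.\ uniforms and $F$. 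A routine conditioning then yields, with $B\sim U(0,1)$ independent of $F$,
\begin{align*}
 \tfrac16&=\Prb(v_1<v_2<u_1)=\E\bigl[B(1-F(B))\bigr],\\
 \tfrac16&=\Prb(u_1<u_2<v_1)=\tfrac12\,\E\bigl[F(B)^2\bigr],\\
 \tfrac16&=\Prb(v_1<u_1<u_2)=\tfrac12\,\E\bigl[(1-F(B))^2\bigr].
\end{align*}

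From these three identities I read off $\E[F(B)]=\tfrac12$, $\E[F(B)^2]=\tfrac13$ and $\E[BF(B)]=\tfrac13$; since $\E[B^2]=\tfrac13$ this gives
\[
 \E\bigl[(B-F(B))^2\bigr]=\E[B^2]-2\E[BF(B)]+\E[F(B)^2]=0,
\]
so $F(B)=B$ a.s. As $B\sim U(0,1)$ is independent of $F$, almost every realisation of $F$ agrees with the identity Lebesgue-a.e.\ on $[0,1]$, and monotonicity plus right-continuity of $F$ then upgrade this to $F(x)=x$ for every $x$. Varying $u,v$ and invoking \refT{uniform} finishes the proof.

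The one point that needs real care is this translation between order events and $V$-inequalities: the implications \eqref{e:vw} and \eqref{e:vw2} are only one-sided, so before writing the three probability identities I must use the continuous distribution of $U_{u_j}$ and its independence from $F$ and the other $U$'s (via \refL{UFindep}) to ensure that the relevant $V$-values are almost surely distinct. Once that is done the remainder is a routine manipulation of first and second moments of $F(B)$.
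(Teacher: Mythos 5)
Your argument is correct but follows a genuinely different route from the paper's. The paper computes $\tfrac13=\Prb(u_1,u_2<v_1)=\E(V_{u,v_1}^2)$ and, by symmetry, $\E(V_{v,u_1}^2)=\tfrac13$, then invokes \refL{FFi} — the inequality $\E X^2+\E Y^2\ge\frac23$ for a random variable $X$ with distribution $F$ and $Y$ with distribution $F^{-1}$, with equality only when $F=\mathrm{id}$ — which requires passing between $F_{u,v}$ and its right-continuous inverse $F_{v,u}$ via \refL{FV}(iii). You instead work entirely with $F:=F_{v,u}$ and probe it with ordering events of the form $v_1<v_2<u_1$ and $u_1<u_2<v_1$, which yield the cross-moment $\E[BF(B)]=\tfrac13$ as well as $\E[F(B)^2]=\tfrac13$; the identity $\E[(B-F(B))^2]=\E[B^2]-2\E[BF(B)]+\E[F(B)^2]=0$ then gives $F=\mathrm{id}$ directly, with no appeal to \refL{FFi} or to the inverse distribution function. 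Your third probability identity ($\Prb(v_1<u_1<u_2)=\tfrac16$) is actually redundant: the first two already force $\E[(B-F(B))^2]=0$, so $\E[F(B)]=\tfrac12$ comes for free. The net effect is a slightly more self-contained and elementary proof; what the paper's route buys is reuse of \refL{FFi}, which it has already set up for the proof of \refL{e-f,f}. Your care in checking that the implications \eqref{e:vw}--\eqref{e:vw2} become a.s.\ equivalences (via the continuity and independence of the relevant $U_{u_j}$) is exactly the point the paper covers by citing \refR{Venough}, so the two treatments of that subtlety agree.
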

\begin{proof}
Pick any two vertices $u,v\in V(G)$, and consider the three vertices
$u_1,u_2,v_1$ in $\Goo$. By \refR{Venough} (and the argument there),
$U_{u_1}$, $U_{u_2}$ and $V_{u,v_1}=F_{v,u}(U_{v_1})$ are independent, with
$U_{u_i}$ uniform, and these three random variables
determine the order between $u_1$, $u_2$ and $v_1$.
By assumption, this order is uniform, and thus
\[
 \tfrac13=\Prb(u_1,u_2<v_1)=
\Prb(U_{u_1}, U_{u_2} <V_{u,v_1})
 =\E(V_{u,v_1}^2).
\]
Similarly,  $\E(V_{v,u_1}^2)=\frac13$.
As in the proof of \refL{e-f,f}, it follows from \refL{FFi} that
$F_{u,v}(x)=x$ a.s., and thus $u\equiv v$ by \refL{=}.
As $u$ and $v$ were arbitrary, the ordering on
$\Goo$ is uniform by \refL{=unif}.
\end{proof}

\begin{proof}[Proof of \refT{twin}]
Consider a consistent ordering model on $\cP$.

Suppose $G\in\cP$.
By repeatedly replacing vertices by twins and using Ramsey's theorem on each
subgraph corresponding to one of the original vertices of~$G$, we see
that for all $N>0$ there exists a $G_N\in\cP$ which is obtained from
$G$ by replacing each vertex with either a complete graph or an empty graph
on $N$ vertices. By the infinite pigeonhole principle, there must be a
template $G'$ with underlying graph $G$ such that for infinitely many~$N$,
$G_N$ is an induced subgraph of $\Goo'$ (with $N$ copies of each vertex in $G$).
But then $\cP_{G'}\subseteq\bigcup_{N=1}^\infty\cP_{G_N}\subseteq\cP$.
Hence the random ordering model on $\cP$ induces a random ordering model
on~$\cP_{G'}$.

Suppose first that $G$ is not a disjoint union of cliques or a complete
multipartite graph. Since $\Goo'$ contains $G$ as an induced subgraph,
\refT{template} shows that the template $G'$ is uniform.
In particular, the random ordering on $G\in\cP_{G'}$ is uniform.

As $G$ is not a disjoint union of cliques,
it contains an induced subgraph isomorphic
to the path $P_3$ on three vertices. Similarly, as $G$ is not complete
multipartite, $G$ contains the graph $\overline{P}_3$ consisting of an edge
and an isolated vertex. Thus $P_3,\overline{P}_3\in\cP$ and receive the
uniform ordering on their vertices. The only other graphs on three vertices
are homogeneous, so we deduce that for any graph $H\in\cP$ and any three
vertices $u,v,w\in V(H)$, the induced random ordering on \set{u,v,w} is
uniform.

Now suppose $G$ is any graph in~$\cP$. Let, as above, $G'$ be a template
with underlying graph $G$ and $\cP_{G'}\subseteq\cP$. By what we just have
shown, any set of three vertices in $\Goo'$ receives the uniform ordering,
and thus the ordering of $\Goo'$ is uniform by \refL{3}. Hence the ordering
of $G$ is uniform.
\end{proof}

\begin{proof}[Proof of \refT{free}.]
The hereditary property $\cF_\cH$ has the property that for any $G\in\cF_\cH$
and $v\in V(G)$, some graph $G'$ obtained by replacing $v$ by twins $v_1,v_2$
is also in~$\cF_\cH$. Indeed, we can take the twins to be adjacent if there
is no graph $H\in\cH$ with adjacent twins, and we can take $v_1,v_2$ to be
non-adjacent if there is no graph $H\in\cH$ with non-adjacent twins. In both
cases no copy of $H\in\cH$ in $G'$ could use both vertices $v_1,v_2$, and
hence $H$ would have to be an induced subgraph of~$G$. Without loss of
generality (by taking complements if necessary), assume we are in the first
case, so that any vertex can be replaced by adjacent twins. If $P_3\in\cF_\cH$
then we are done by \refT{twin} as $\cF_\cH$ contains blowups of $P_3$ that
are neither a disjoint union of cliques nor complete multipartite (for example, a
triangle with a pendant edge). If $P_3\notin\cF_\cH$, then $\cH$ must contain
an induced subgraph of~$P_3$. As $P_3\notin\cH$, $\cH$ must then contain a graph
with two (or fewer) vertices. But then $\cF_\cH$ consists only of homogeneous
graphs, and is therefore uniform.
\end{proof}

\section{Gluing graphs}\label{s:glue}

In this section we show in particular that
hereditary properties that are closed
under joining graphs at a single vertex, and many hereditary properties of
forests, are uniform. We start by proving the result for any hereditary
property that satisfies a certain technical condition.

Denote the disjoint union of two graphs $G_1$ and $G_2$ by $G_1\cup G_2$.
Suppose $G$ is a graph and $H$ is an induced subgraph. Define the graph
$[G]^n_H$ to be the graph obtained by taking $n$ copies of $G$ (i.e.,
$G\cup G\cup\dots\cup G$, $n$ times) and identifying the corresponding
subgraphs $H$ from each copy. Thus, for example,
$|V([G]^n_H)|=n|V(G)\setminus V(H)|+|V(H)|$. Let $\bKn$ denote the empty
graph on $n$ vertices. We also extend these notations in the obvious
way to the case when $n=\infty$.

\begin{theorem}\label{t:glue}
 Suppose\/ $\cP$ is a hereditary property such that for any\/
 $G\in\cP$ with at least\/ $2$ vertices, there exists a proper induced
 subgraph\/ $H\ne\emptyset$ of\/ $G$ such that for all\/ $n\ge1$,
 $[G]^n_H\cup[G]^n_H\cup\overline{K}_n\in\cP$. Then\/ $\cP$ is uniform.
\end{theorem}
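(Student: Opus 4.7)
The plan is to argue by strong induction on $|V(G)|$. The cases $|V(G)|\le 1$ are immediate; assume $|V(G)|=k\ge 2$ and that every graph in $\cP$ with fewer than $k$ vertices receives the uniform random order. If $G$ is homogeneous, apply \refL{homog}. Otherwise the hypothesis supplies a proper non-empty induced subgraph $H\subsetneq V(G)$ with $[G]^n_H\cup[G]^n_H\cup\bKn\in\cP$ for every $n\ge 1$. Every finite induced subgraph of
\[
 \Gamma \;:=\; [G]^\infty_H\cup[G]^\infty_H\cup\overline{K}_\infty
\]
then lies in $\cP$, so by \refL{consistent} the consistent ordering model on $\cP$ extends to a unique consistent random ordering on $\Gamma$. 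Write $A,B$ for the two copies of $[G]^\infty_H$ and $C=\{c_j\}_{j\ge 1}$ for the isolated vertices; let $V(H)^A,V(H)^B$ and $W^{(i,A)},W^{(i,B)}$ ($i\ge 1$) be the corresponding copies of $V(H)$ and $W:=V(G)\setminus V(H)$. It suffices to show that the induced order on $V(H)^A\cup W^{(1,A)}\cong V(G)$ is uniform.

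The first idea is to use the isolated vertices as reference points. Permuting the $c_j$'s is an automorphism of $\Gamma$, so $(\id{c_j<x})_{j\ge 1}$ is exchangeable for every $x\in V(\Gamma)\setminus C$, and by de Finetti's theorem
\[
 \xi_x \;:=\; \lim_{n\to\infty} \frac1n\sum_{j=1}^n \id{c_j<x}
\]
exists a.s.\ in $[0,1]$. The induced subgraph $\{x\}\cup C\cong\overline{K}_\infty$ is homogeneous, hence (by the argument of \refL{homog}) carries the uniform order, so $\xi_x\sim U(0,1)$. Moreover $x<y\Rightarrow\xi_x\le\xi_y$, and $\xi_x<\xi_y\Rightarrow x<y$ (since then some $c_j$ lies strictly between them), so on the event that the $k$ values $\{\xi_x\}_{x\in V(H)^A\cup W^{(1,A)}}$ are distinct, the induced order on $V(H)^A\cup W^{(1,A)}$ is determined by them. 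Therefore it suffices to show that these $k$ values are jointly i.i.d.\ $U(0,1)$.

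For any proper subset $S\subsetneq V(H)^A\cup W^{(1,A)}$ the subgraph $\Gamma[S]$ is isomorphic to a proper induced subgraph of $G$, hence has $<k$ vertices, and by the inductive hypothesis its induced order is uniform. Combined with the induced uniformity of the orderings on $S\cup\{c_1,\dots,c_m\}$ for $|S|+m<k$ (again by the inductive hypothesis) and the $S_\infty^C$-exchangeability of $C$, one deduces that $(\xi_x)_{x\in S}$ is jointly i.i.d.\ $U(0,1)$. The main obstacle is to extend this to the full $k$-vertex set $S=V(H)^A\cup W^{(1,A)}$, which lies just beyond the reach of the induction. Here the doubled copy $B$ and the extra copies $W^{(i,A)}$, $i\ge 2$, become indispensable: I would adapt the Cauchy--Schwarz and variance-zero arguments of \refL{e-f,f} and \refL{f-e-f} to this setting, defining distribution functions $F_{u,v}$ by analogy with \refL{F} in which the infinite families $\{w^{(i,A)}\}_{i\ge 1}$ play the role of the template blobs $\cW_v$. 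The already-established joint uniformity of the $\xi$-coordinates for $i\ge 2$ (together with the mirror copies in $B$) yields, by exchangeability and the uniformity on $(k-1)$-subsets, matching upper and lower bounds for certain expected squares; equality is then forced, \refL{FFi} makes each $F_{u,v}$ the identity a.s., and an analogue of \refL{=unif} concludes the induction.

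The hardest step, by a clear margin, is this last one: bridging from joint uniformity on $(k-1)$-subsets (which the induction directly handles) to joint uniformity on the full $k$-vertex set $V(G)$. The two copies of $[G]^\infty_H$ in the hypothesis together with the auxiliary $\overline{K}_\infty$ supply exactly the extra exchangeability and independent reference coordinates needed to close the Cauchy--Schwarz inequality.
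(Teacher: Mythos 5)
Your overall strategy (induct on $|V(G)|$, build the blown-up graph $\Gamma=[G]^\infty_H\cup[G]^\infty_H\cup\bK_\infty$, use the isolated vertices as reference coordinates via de Finetti) is the right skeleton and matches the paper's. But there is a genuine gap that you notice but do not close, and the paper closes it with an idea that is missing from your write-up: the \emph{inductive hypothesis must be strengthened}. You induct on the claim ``every $G\in\cP$ with $<k$ vertices is uniform.'' The paper instead proves by induction on $|V(G)|$ the stronger claim that for every $G\in\cP$, the ordering on $G\cup G\cup\bK_n$ is uniform for \emph{every}~$n$. This is essential at two places. First, with $G'=G\setminus H$, one needs the ordering on $G'\cup G'\cup\bK_n$ to be uniform for all $n$ (hence on $G'\cup G'\cup\bK_\infty$) in order to conclude, via de Finetti and a variance-zero computation, that the reference coordinates of the $w$-vertices are jointly i.i.d.\ $U(0,1)$; but $|V(G'\cup G'\cup\bK_n)|$ is typically $\ge k$, so your weak hypothesis does not cover it. Second, one needs the ordering on $H\cup H\cup\bK_n$ to be uniform for all $n$, again a graph with $\ge k$ vertices for $n$ large. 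Your intermediate claim — that $(\xi_x)_{x\in S}$ is jointly i.i.d.\ $U(0,1)$ for every proper $S\subsetneq V(G)$ — has the same defect: from your hypothesis you only get uniformity of $\Gamma[S\cup\{c_1,\dots,c_m\}]$ for $|S|+m<k$, and exchangeability of $C$ alone does not let you pass to $m\to\infty$, which is what is actually needed to control the limits $\xi_x$.

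Your proposed bridge across the gap (adapting the Cauchy--Schwarz/variance arguments of \refL{e-f,f} and \refL{f-e-f} and an analogue of \refL{=unif}) is not carried out and is not how the paper proceeds. With the strengthened induction, the paper avoids this entirely: it first shows the $V_{i,j,k}$ are jointly i.i.d.\ uniform (de Finetti on the permutable $G'$-copies, plus the induction applied to $G'\cup G'\cup\bK_n$ to kill the variance of the directing measure), then shows they remain i.i.d.\ uniform even after conditioning on the ordering of $H_1\cup H_2\cup\bK_\infty$ (again by exchangeability of the pairs $(V_{1,j},V_{2,j})$ and the law of large numbers), and finally notes the ordering on $H_1\cup H_2\cup\bK_\infty$ is itself uniform by the (strengthened) induction applied to~$H$. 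Independence plus continuity of the $V_{i,j,k}$ then determines the whole ordering on $\tG$ and shows it is uniform, after which $G\cup G\cup\bK_n$ inherits uniformity as a finite induced subgraph. So: your outline identifies the right obstacle, but the missing idea is exactly the strengthened inductive claim that covers $G\cup G\cup\bK_n$, not merely $G$.
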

\begin{proof}
We may assume $\cP$ contains some non-empty graph as otherwise $\cP$
is clearly uniform. Note that, by taking an induced subgraph,
for any $G\in\cP$,
$G\cup G\cup\bK_n\in\cP$. (For $|V(G)|<2$ take an induced subgraph of
$G'\cup G'\cup\bK_n$ with $|V(G')|\ge2$.) We shall prove by induction on
$|V(G)|$ that if $G\in\cP$ then the ordering on $G\cup G\cup\bK_n$ is uniform
for any~$n$. This clearly implies the result. As $G\cup G\cup\bK_n$ is
homogeneous for $|V(G)|<2$, we may assume $|V(G)|\ge2$. Thus by assumption
there exists a proper induced subgraph $H\ne\emptyset$ of $G$ such that for
all $n\ge1$, $[G]^n_H\cup[G]^n_H\cup\bK_n\in\cP$.
Let $\tG=[G]^\infty_H\cup[G]^\infty_H\cup\bK_\infty$. Then
$\cP_{\tG}\subseteq\cP$, and so the consistent ordering on $\cP$ induces an
consistent ordering on $\cP_{\tG}$, and hence on~$\tG$
(see \refL{consistent}). Denote the vertices
of $\bK_\infty$ as $\set{u_i}\ioo$, and the copies of $H$ as $H_i$, $i=1,2$,
with vertices $V(H_i)=\set{v_{i,1},\dots,v_{i,r}}$. Denote the remaining
vertices in the $j$th copy of $G':=G\setminus H$ associated to $H_i$
as~$w_{i,j,k}$, $k=1,\dots,s$. Let $\tG'=\tG\setminus(H_1\cup H_2)$ be
the graph $\tG$ with the two copies of $H$ removed, so that $\tG'$ consists of
an infinite number of disjoint copies of $G'$ together with~$\bK_\infty$.
We first consider the induced random ordering on~$\tG'$. One can define random
variables
\[
 V_{i,j,k}=V_{u,w_{i,j,k}}:=
 \lim_\ntoo\frac1n\sum_{\ell=1}^n\id{w_{i,j,k}>u_\ell}
\]
as in \refS{basic} giving the order of the $w_{i,j,k}$ relative to the vertices
in the $\bK_\infty$ subgraph. As the copies of $G'$ can be permuted in $\tG'$,
the random variables
$V_{i,j}:=(V_{i,j,1},\dots,V_{i,j,s})$ are exchangeable for $i\in\set{1,2}$,
$j\ge1$. Hence, de Finetti's theorem implies that
there is a random distribution $\mu$ on $[0,1]^s$ such that,
conditioned on~$\mu$, the $V_{i,j}$ are i.i.d.\ with distribution~$\mu$.
However, we know that the joint distribution of $V_{i,1}$ and $V_{i,2}$, say,
is uniform as by induction the induced subgraph $G'\cup G'\cup\bK_n$ has a
uniform random order for all~$n$, and hence $G'\cup G'\cup\bK_\infty$ receives
a uniform random ordering. Thus for any measurable subset $S\subseteq [0,1]^s$,
$\E(\mu(S)\mu(S))=|S|^2=\E(\mu(S))\E(\mu(S))$. Thus $\mu$ is a.s.\ constant
and uniform. Thus all $V_{i,j}$ are i.i.d.\ uniform random variables
in $[0,1]^s$, i.e., all $V_{i,j,k}$ are i.i.d.\ $U(0,1)$ random variables.

Let $\cE$ be any event determined by the ordering on
$H_1\cup H_2\cup\bK_\infty$, and assume $\Prb(\cE)=p>0$.
The pairs $(V_{1,j},V_{2,j})$, $j\ge1$, are exchangeable, even conditioned
on~$\cE$. Hence, there is a random measure $\mu_\cE$ on $[0,1]^{2s}$ such that
conditioned on $\cE$ and $\mu_\cE$, $(V_{1,j},V_{2,j})$ are i.i.d.\ with
distribution~$\mu_\cE$.
However, for any measurable subset $S\subseteq[0,1]^{2s}$,
a.s.\ on $\cE$,
\[
 \mu_\cE(S)
 = \lim_\ntoo\frac1n\sum_{j=1}^n\id{(V_{1,j},V_{2,j})\in S}
 =|S|.
\]
Hence, the pairs $(V_{1,j},V_{2,j})$, $j\ge1$, are i.i.d.\ and uniform
even conditioned on~$\cE$. In other words, all $V_{i,j,k}$ are i.i.d.\ and
uniform, and independent of
the ordering on $H_1\cup H_2\cup\bK_\infty$. However, by induction,
the random ordering on $H_1\cup H_2\cup\bK_\infty$ is also uniform
as it is uniform on every subgraph $H_1\cup H_2\cup\bK_n$. The ordering
on $\tG$ is a.s.\ determined by the ordering on $H_1\cup H_2\cup\bK_\infty$
and the variables $V_{i,j,k}$ as the $V_{i,j,k}$ are continuous.
Clearly this distribution is uniform. The result follows as
$G\cup G\cup K_n$ is an induced subgraph of~$\tG$.
\end{proof}

\begin{example}\label{x:flowers}
We note that the requirement that we have two copies of $[G]_H^n$ in
\refT{glue} is essential. For example, let $\cP$ be the set of all graphs that
are induced subgraphs of some $[C_4]^n_{\set{u}}$ (i.e., a collection of
4-cycles with a single vertex identified). Fix $\alpha\in[0,1]$ and assign to
each vertex an i.i.d.\ $U(0,1)$ random variable $X_v$, conditioned so that the
sum of $X_v$ round any 4-cycle is $\alpha\bmod1$. It is not hard to see that
for suitable $\alpha$ this gives a consistent random ordering on $\cP$ which is
not uniform (use \refL{addx}). However for any $G\in\cP$, $|V(G)|\ge2$, the
graph $[G]^n_{H}\cup \bK_n$ lies in $\cP$ for some $H\subset G$, $H\ne\emptyset$.
\end{example}

\begin{example}\label{x:2flowers}
In contrast to \refE{flowers}, let $\cP'$ be the set of all graphs
that are disjoint unions of induced subgraphs of some $[C_4]^n_{\set{u}}$.
Then $\cP'$ satisfies the conditions of \refT{glue}.
Hence $\cP'$ is uniform. Note that the ordering described in \refE{flowers}
is not consistent on $\cP'$ due to the fact that there are two distinct induced
distributions on subgraphs isomorphic to $P_3\cup P_3$ (the one on
$[C_4]^2_{\set{u}}\setminus\set{u}$ not being uniform). The class $\cP'$
also has the property that all $C_4$ subgraphs are edge disjoint, and indeed
also have private vertices that do not belong to any other~$C_4$,
cf.\ \refR{disjoint}.
\end{example}

\begin{proof}[Proof of \refT{joins}.]
If $\cP$ consists only of empty graphs then it is uniform and we are done,
so assume $\cP$ contains some non-empty graph. Then $K_2\in\cP$, and so by
assumption on~$\cP$, $P_3\in\cP$. Take any graph $G\in\cP$ and any vertex
$v\in V(G)$. We can attach multiple copies of $G$ together at $v$ to obtain
$[G]^n_{\set{v}}\in\cP$. Joining two of these to the end-vertices of a $P_3$
and then removing the central vertex gives
$[G]^n_{\set{v}}\cup[G]^n_{\set{v}}\in\cP$. Now repeatedly attaching this
graph to an end-vertex of $P_3$ and removing the central vertex of the $P_3$
gives $[G]^n_{\set{v}}\cup[G]^n_{\set{v}}\cup\bK_n\in\cP$. Hence $\cP$
satisfies the conditions of \refT{glue}, so is uniform.
\end{proof}

In the case when $G\setminus H$ always
is a set of isolated vertices, one can
weaken the conditions of \refT{glue} so that only one copy of $[G]^n_H$
is required. Indeed, in this case we can prove by induction that $G\cup K_n$
is uniform and, in the proof, note that $\tG'$ is an empty graph, so is
automatically uniform. This implies \refT{leaves} in the case when (i)
always holds as we can take $H$ to be $G\setminus\set{u}$. We modify the
proof slightly to obtain \refT{leaves} in its entirety.

\begin{proof}[Proof of \refT{leaves}.]
Given any forest~$F$, write $S_F$ for the set of vertices of $F$ that are
adjacent to a leaf of~$F$. Write $F^*_n$ for the forest obtained
by adding (or deleting) isolated vertices so that $F^*_n$ has exactly
$n$ isolated vertices. For $u\in S_F$, write $F^u_n$ for the forest
obtained by adding (or deleting) leaves attached to $u$ so that
$F^u_n$ has exactly $n$ leaves attached to~$u$.

Consider a consistent random ordering on~$\cP$.
We prove that for every forest $F\in\cP$ and every $u\in S_F\cup\{*\}$,
the random ordering on $F^u_n$ is uniform, provided these graphs lie
in $\cP$ for every~$n$.
The proof is by induction on $|V(F)|$. If $F$ is empty then
$S_F=\emptyset$ and $F^*_n$ is empty, so uniform. Thus we may assume $F$ is
non-empty. As no $F^u_n$ is empty, either (i) or (ii) holds for~$F^u_n$.
This implies there exists $v_n\in S_{F^u_n}\cup\{*\}$ with $v_n\ne u$,
such that the graph $F^{u,v_n}_{n,m}:=(F^u_n)^{v_n}_m$ lies in~$\cP$.
As $S_{F^u_m}=S_F$ is finite, this implies that there is a single
$v\in S_F\cup\{*\}$, $v\ne u$, such that $F^{u,v}_{n,m}\in\cP$
for all $n,m$. Let $F^{u,v}_{\infty,\infty}$ be the infinite
graph with infinitely many leaves or isolated vertices associated
with $u$ and~$v$. Let the leaves or isolated vertices associated
to $u$ be $\{u_i\}_{i\ge1}$ and let the leaves or isolated vertices
associated to $v$ be $\{v_i\}_{i\ge1}$.

Any finite subgraph of $F^{u,v}_{\infty,\infty}$ belongs to~$\cP$,
so the ordering on $\cP$ induces a random ordering on
$F^{u,v}_{\infty,\infty}$.
Assume first that $v\ne*$. As in \refS{basic} we can define random variables
\[
 V_i=V_{u,v_i}:=\lim_\ntoo\frac1n\sum_{k=1}^n\id{v_i>u_k}.
\]
As $\set{u_i}_{i\ge1}\cup\set{v_i}_{i\ge1}$ is a homogeneous set, $V_i$
are i.i.d.\ $U(0,1)$ random variables. Moreover, as in the proof of
\refT{glue}, these random variables are independent of the ordering
on $F^{u,v}_{\infty,0}$. But the random ordering on
$F^{u,v}_{\infty,0}$ is also uniform as it is uniform on all
subgraphs $F^{u,v}_{n,0}$ by induction applied to the proper subgraph
$F^{u,v}_{1,0}$ (or $F^{u,v}_{0,0}$ if $u=*$) of~$F$.
Also, a.s.\ the ordering on $F^{u,v}_{\infty,\infty}$ is determined
by the ordering on $F^{u,v}_{\infty,0}$ and the $V_i$ as the $V_i$
are continuous, and this random ordering is clearly uniform.
If $v=*$ then, interchanging $u$ and $v$, we again have that the
ordering on $F^{u,v}_{\infty,\infty}$ is uniform. Hence in both cases
the ordering on $F^u_n$ is uniform for all~$n$.

Finally we note that for any non-empty $F\in\cP$ conditions (i) or (ii)
imply that there is a $u\in S_F$ such that $F^u_n\in\cP$ for all~$n$.
Hence the ordering on $F$ is also uniform.
\end{proof}

\section{Acknowledgements}

The second author is grateful to the Theory Group at Microsoft Research,
Redmond, and especially to Professor Yuval Peres, whose long-term
hospitality he enjoyed in the spring of 2013, when the first two authors
started this research. The second author is grateful to Trinity College
Cambridge for supporting his research there.
The third author is grateful to Cambridge University and Churchill College,
which he visited when this research was continued.
The authors are also grateful to
Professor Russ Lyons for drawing their attention to his results with
Kechris and Lyons in \cite{AKL}, and presenting several arguments.

\appendix
\section{Non-uniformity of some explicit distributions}

We recall the {\em Bernoulli polynomials} $B_n(x)$, which can be defined by
the generating function
\[
 \frac{te^{xt}}{e^t-1}=\sum_{n=0}^\infty B_n(x)\frac{t^n}{n!},
\]
see e.g.\ \cite[\S24.2]{NIST}.
The first few values are $B_0(x)=1$, $B_1(x)=x-\frac12$,
$B_2(x)=x^2-x+\frac16$, and $B_3(x)=x^3-\frac32x^2+\frac12x$. The most
important property for our purposes is the Fourier series representation
of~$B_n(x)$ \cite[(24.8.3)]{NIST},
\begin{equation}\label{e:bernoulli}
 B_n(x)=-\frac{n!}{(2\pi i)^n}\sum_{k\ne0}\frac{1}{k^n}e^{2\pi ikx},
\end{equation}
which is valid for $x\in[0,1]$ when $n\ge2$ and for $x\in(0,1)$ when $n=1$.

\begin{lemma}\label{l:addx}
 Let\/ $n\ge2$ and\/ $\alpha\in[0,1]$,
 and let\/ $X_1,\dots,X_{n-1}$ and\/ $X$, $X'$ be
 i.i.d.\ $U(0,1)$ random variables. Define\/ $X_n\in[0,1]$ so that
 \begin{equation}\label{e:addx}
  \sum_{i=1}^nX_i\equiv\alpha\bmod 1.
 \end{equation}
 Then for\/ $1\le k\le n$,
 \[
  \Prb\big[X<X_k\text{ and }X_1<X_2<\dots<X_n\big]
  =\frac{k}{(n+1)!}+\frac{(-1)^{n-k}}{n!^2}\binom{n-1}{k-1}B_n(\alpha),
 \]
 and
 \begin{align*}
  \Prb\big[X,X'<X_k&\text{ and }X_1<X_2<\dots<X_n\big]=\\
  &\frac{k(k+1)}{(n+2)!}+\frac{(-1)^{n-k}}{n!(n+1)!}\binom{n-1}{k-1}
  \bigpar{(n+1)B_n(\alpha)+2H_nB_{n+1}(\alpha)},
 \end{align*}
 where $H_n=1+\frac12+\frac13+\dots+\frac1n$.
\end{lemma}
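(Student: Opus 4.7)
The plan is to realise $(X_1,\dots,X_n)$ as $n$ i.i.d.\ $U(0,1)$ random variables conditioned on $S:=\sum_iX_i\equiv\alpha\pmod 1$. (Indeed, setting $X_n\equiv\alpha-\sum_{i<n}X_i\pmod 1$ defines a uniform variable, and the marginal of $S\pmod 1$ is $U(0,1)$.) The conditional density on $[0,1]^n$ is a Dirac comb on the torus, whose Fourier expansion gives, for any bounded measurable event $\cE$,
\[
 \Prb[\cE]=\sum_{\ell\in\Z}e^{-2\pi i\ell\alpha}\,\E_{\mathrm{iid}}\bigl[\id{\cE}\,e^{2\pi i\ell S}\bigr],
\]
the expectation being taken with $X_1,\dots,X_n$ genuinely i.i.d.\ uniform. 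For $n\ge2$ the summands will decay like $\ell^{-n}$, so the series converges absolutely.

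For the first formula, I would apply this to $\cE=\{X<X_k,\,X_1<\dots<X_n\}$, integrating out $X$ first to produce a factor $x_k$. The $\ell=0$ term is the uniform value $k/(n+1)!$. For $\ell\ne 0$, the key trick is that $e^{2\pi i\ell\sum x_j}$ is symmetric under permutations, so the integrals over $\{0<x_1<\dots<x_{k-1}<x_k\}$ and $\{x_k<x_{k+1}<\dots<x_n<1\}$ become $\frac{1}{(k-1)!}\bigpar{(e^{2\pi i\ell x_k}-1)/(2\pi i\ell)}^{k-1}$ and $\frac{1}{(n-k)!}\bigpar{(1-e^{2\pi i\ell x_k})/(2\pi i\ell)}^{n-k}$ respectively (using $e^{2\pi i\ell}=1$). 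This reduces the whole thing to a single integral $\int_0^1 u\,e^{2\pi i\ell u}(e^{2\pi i\ell u}-1)^{n-1}\,du$ times an explicit prefactor. Binomial expansion and the elementary $\int_0^1 u\,e^{2\pi imu}du=1/(2\pi im)$ (for nonzero integer $m$) produce the sum $\sum_{j=0}^{n-1}\binom{n-1}{j}(-1)^{n-1-j}/(j+1)$, which collapses to $(-1)^{n+1}/n$ via $\sum_{m=0}^n\binom{n}{m}(-1)^{n-m}=0$. Summing over $\ell\ne0$ and identifying $\sum_{\ell\ne0}e^{-2\pi i\ell\alpha}/(2\pi i\ell)^n$ via \eqref{e:bernoulli} together with $B_n(1-x)=(-1)^nB_n(x)$ produces the claimed $B_n(\alpha)$ term.

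The second formula runs on the same template, with $x_k$ replaced by $x_k^2$ in the integrand (since $\Prb[X,X'<X_k\mid X_k]=X_k^2$). The inner integral becomes $\int_0^1 u^2\,e^{2\pi i\ell u}(e^{2\pi i\ell u}-1)^{n-1}du$, and the new elementary formula $\int_0^1 u^2\,e^{2\pi imu}du=1/(2\pi im)-2/(2\pi im)^2$ contributes an additional piece with denominator $(2\pi i\ell)^{n+1}$ which, via the Fourier series \eqref{e:bernoulli} applied to $n+1$ in place of $n$, yields the $B_{n+1}(\alpha)$ term. The accompanying combinatorial sum $\sum_{j=0}^{n-1}\binom{n-1}{j}(-1)^{n-1-j}/(j+1)^2=(-1)^{n+1}H_n/n$ is the delicate piece: I would prove it by writing $1/(j+1)^2=-\int_0^1 x^j\log x\,dx$, summing the binomial to $(x-1)^{n-1}$, and then evaluating $\int_0^1 y^{n-1}\log(1-y)\,dy=-H_n/n$ by expanding $\log(1-y)$ and using partial fractions $1/(m(m+n))=\frac{1}{n}(1/m-1/(m+n))$. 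The $\ell=0$ constant $k(k+1)/(n+2)!$ is a direct order-statistic computation using the Beta density of the $k$-th order statistic.

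The main obstacles are sign bookkeeping, particularly the use of $B_n(1-x)=(-1)^nB_n(x)$ needed to convert $e^{-2\pi i\ell\alpha}$ back into $B_n(\alpha)$, and the harmonic-number identity above, which is the only step that brings in genuinely new information beyond the Bernoulli Fourier series itself. Everything else is routine, if tedious, combinatorial manipulation.
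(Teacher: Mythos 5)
Your proposal is correct and follows essentially the same route as the paper: Fourier-expand in the auxiliary variable $\alpha$, exploit symmetry to factor the $n$-dimensional integral into $\frac{1}{(k-1)!\,(n-k)!}\bigpar{(e^{\omega x}-1)/\omega}^{k-1}\bigpar{(1-e^{\omega x})/\omega}^{n-k}$, reduce to a single one-variable integral $\int_0^1 x^j e^{\omega x}(e^{\omega x}-1)^{n-1}\,dx$, and then invert using the Fourier series \eqref{e:bernoulli} of $B_n$. The one genuine methodological difference is how that last integral is evaluated: the paper first integrates by parts, reducing it to $\int_0^1 jx^{j-1}(1-e^{\omega x})^n\,dx$, and for $j=2$ derives the value $I_n=\frac12-\frac{H_n}{\omega}$ by a clean one-line recursion $I_n-I_{n-1}=-\frac{1}{n\omega}$; you instead expand $(e^{\omega x}-1)^{n-1}e^{\omega x}$ by the binomial theorem and evaluate the resulting alternating sums $\sum_{j=0}^{n-1}\binom{n-1}{j}\frac{(-1)^{n-1-j}}{(j+1)}$ and $\sum_{j=0}^{n-1}\binom{n-1}{j}\frac{(-1)^{n-1-j}}{(j+1)^2}$ directly, the latter via the generating-function trick $1/(j+1)^2=-\int_0^1 x^j\log x\,dx$ and the series for $\log(1-y)$. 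Both computations are correct (I checked the harmonic identity and the signs work out, including the $(-1)^n$ that appears when re-indexing $\ell\mapsto-\ell$ in the Fourier series, so you do not even need to explicitly invoke $B_n(1-x)=(-1)^nB_n(x)$); the paper's recursion is slightly slicker, but the end result and the level of rigor are the same.
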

\begin{proof}
Let $P^1_k(\alpha)=\Prb\big[X<X_k\text{ and }X_1<X_2<\dots<X_n]$
and $P^2_k(\alpha)=\Prb\big[X,X'<X_k\text{ and }X_1<X_2<\dots<X_n]$.
If $\alpha$ is replaced by a uniform random variable on $[0,1]$, independent
of $X_1,\dots,X_{n-1},X,X'$, then $X_1,\dots,X_n,X,X'$ are i.i.d.\ $U(0,1)$ random
variables and $\alpha$ satisfies~\eqref{e:addx}.
Thus the Fourier transform
\[
 \hat P^j_k(t):=\int_0^1P^j_k(\alpha)e^{2\pi it\alpha}\,d\alpha,
 \qquad t\in\Z,
\]
can be represented as
\[
 \hat P^j_k(t)=\E\bigpar{P^j_k(\alpha)e^{\omega X_1+\dots+\omega X_n}}
 =\int_{X_1<\dots<X_n} X_k^je^{\omega X_1+\dots+\omega X_n}\,dX_1\dotsm\, dX_n,
\]
where $\omega=2\pi it$. If $t=0$ then $\hat P^1_k(0)=k/(n+1)!$ and
$P^2_k(0)=k(k+1)/(n+2)!$ as there are $k$ (respectively $k(k+1)$) orderings of
$X,X_1,\dots,X_n$ (respectively $X,X',X_1,\dots,X_n$) contributing to $P^j$
and the $X,X',X_1,\dots,X_n$ are i.i.d. Hence we may now assume $t\ne0$.
By symmetry,
\begin{align*}
\hat P^j_k(t)
&=\frac{1}{(k-1)!\,(n-k)!}
\int_{X_1,\dots,X_{k-1}<X_k<X_{k+1},\dots,X_n}
\!X_k^je^{\omega X_1+\dots+\omega X_n}\,dX_1\dotsm\, dX_n
\\
&=\frac{1}{(k-1)!\,(n-k)!}\int_0^1
\biggpar{\int_0^x e^{\omega y}\,dy}^{k-1}
\biggpar{\int_x^1 e^{\omega y}\,dy}^{n-k}
x^je^{\omega x}\,dx
\\
&=\frac{1}{(k-1)!\,(n-k)!\,\omega^{n-1}}\int_0^1
\bigpar{e^{\omega x}-1}^{k-1}
\bigpar{1- e^{\omega x}}^{n-k}
x^je^{\omega x}\,dx
\\
&=\frac{(-1)^{n-k}}{\omega^{n-1}(k-1)!\,(n-k)!}\int_0^1
x^j\bigpar{e^{\omega x}-1}^{n-1}
e^{\omega x}\,dx.
\end{align*}
Integrating by parts gives
\begin{align*}
 \hat P^j_k(t)&=\frac{(-1)^{n-k}}{\omega^nn(k-1)!(n-k)!}
 \bigg(x^j(e^{\omega x}-1)^n\big|_0^1
 -\intoi jx^{j-1}(e^{\omega x}-1)^n\,dx\bigg)\\
 &=\frac{(-1)^{k+1}}{\omega^n n!}\binom{n-1}{k-1}
 \intoi jx^{j-1}(1-e^{\omega x})^n\,dx.
\end{align*}
For $j=1$, expand $(1-e^{\omega x})^n$ using the binomial theorem and
note that $\intoi e^{s\omega x}\,dx=0$ for $s\in\Z\setminus\set{0}$.
This gives
\[
 \hat P^1_k(t)=\frac{(-1)^{k+1}}{\omega^n n!}\binom{n-1}{k-1}.
\]
For $j=2$, we note that
\begin{equation}\label{e:In}
 I_n:=\intoi x(1-e^{\omega x})^n\,dx=\frac{1}{2}-\frac{1}{\omega}H_n,
\end{equation}
where $H_n=1+\frac12+\frac13+\dots+\frac1n$. Indeed, $I_0=\frac12$ and,
for $n\ge1$,
\[
 I_n-I_{n-1}=\intoi\!\! x(-e^{\omega x})(1-e^{\omega x})^{n-1}dx
 =\tfrac{1}{n\omega}x(1-e^{\omega x})^n\Big|_0^1
 -\intoi\!\!\tfrac{1}{n\omega}(1-e^{\omega x})^n dx
 =-\tfrac{1}{n\omega}.
\]
Hence
\[
 \hat P^2_k(t)=\frac{(-1)^{k+1}}{\omega^n n!}\binom{n-1}{k-1}
 \Big(1-\frac{2}{\omega}H_n\Big).
\]
Now we take inverse Fourier transforms, noting that by \eqref{e:bernoulli}
the inverse Fourier transform of $\omega^{-n}$ is
\begin{equation}\label{e:ift}
 \sum_{t\ne0} \frac{1}{\omega^n}e^{-2\pi i \alpha t}
 =\frac{1}{(-2\pi i)^n}\sum_{t\ne0}\frac{1}{(-t)^n}e^{2\pi i\alpha(-t)}
 =-\frac{(-1)^n}{n!}B_n(\alpha).
\end{equation}
We obtain
\[
 P^1_k(\alpha)=\frac{k}{(n+1)!}
 +\sum_{t\ne0}\frac{(-1)^{k+1}}{\omega^nn!}\binom{n-1}{k-1}e^{-2\pi it\alpha}
 =\frac{k}{(n+1)!}+\frac{(-1)^{n-k}}{n!^2}\binom{n-1}{k-1}B_n(\alpha),
\]
and
\begin{align*}
 P^2_k(\alpha)&=\frac{k(k+1)}{(n+2)!}
 +\sum_{t\ne0}\frac{(-1)^{k+1}}{\omega^nn!}\binom{n-1}{k-1}
 \Big(1-\frac{2}{\omega}H_n\Big)e^{-2\pi it\alpha}\\
 &=\frac{k(k+1)}{(n+2)!}+\frac{(-1)^{n-k}}{n!(n+1)!}\binom{n-1}{k-1}
 \bigpar{(n+1)B_n(\alpha)+2H_nB_{n+1}(\alpha)}
\end{align*}
for almost all $\alpha\in[0,1]$. As in both cases both sides are continuous
in~$\alpha$, these in fact hold for all $\alpha\in[0,1]$.
\end{proof}

\begin{lemma}\label{l:edgedist}
 Let\/ $X_1,\dots,X_{n-1}$ be i.i.d.\ $U(0,1)$ random variables. Fix\/
 $\alpha\in[0,1]$ and\/ $1\le i<\ell\le n$ and define\/ $X_n\in[0,1]$ so that
 \begin{equation}\label{e:edgedist}
  \sum_{i\ne i,\ell}^nX_i-X_i-X_\ell\equiv\alpha\bmod 1.
 \end{equation}
 Define $P^{(i,\ell)}_{j_1,j_2,\dots,j_r}$ to be the probability that
 \begin{equation}\label{e:D}
  X_{j_1}<X_{j_2}<\dots<X_{j_r}<\min(X_s:s\notin\set{j_1,\dots,j_r}),
 \end{equation}
 i.e., that the smallest\/ $r$ values of\/ $X_k$ are\/ $X_{j_1},\dots,X_{j_r}$
 in that order. Then for distinct\/ $i,j,k,\ell$,
 \begin{align*}
  P^{(i,\ell)}_{i,j}-P^{(i,\ell)}_{j,i}&
   =\frac{(-1)^n}{(n-1)!}\binom{n}{2}B_{n-1}(\alpha)&(n\ge3)\\
  P^{(i,\ell)}_{i,j,k}-P^{(i,\ell)}_{j,i,k}&=\frac{(-1)^n}{(n-1)!}
   (n-3+2H_{n-3})B_{n-1}(\alpha)+\frac{(-1)^n}{(n-2)!}B_{n-2}(\alpha)&(n\ge4)
 \end{align*}
 where $H_n=1+\frac12+\frac13+\dots+\frac1n$.
\end{lemma}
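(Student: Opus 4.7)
The plan is to mimic the proof of \refL{addx} using the Fourier transform in~$\alpha$. As there, replacing $\alpha$ in \eqref{e:edgedist} by an independent $U(0,1)$ random variable makes $X_1,\dots,X_n$ i.i.d.\ $U(0,1)$, so for $t\in\Z$ and $\omega=2\pi it$,
\[
 \hat P^{(i,\ell)}_{j_1,\dots,j_r}(t)
 :=\intoi P^{(i,\ell)}_{j_1,\dots,j_r}(\alpha)e^{2\pi it\alpha}\,d\alpha
 =\int_{\eqref{e:D}}\exp\Big(\omega\sum_s\eps_sX_s\Big)\prod_sdX_s,
\]
with $\eps_s=-1$ for $s\in\set{i,\ell}$ and $\eps_s=+1$ otherwise. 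For $t\ne0$ we have $e^\omega=1$, so each free variable $X_s$ with $s\notin\set{j_1,\dots,j_r}$ contributes $f(X_{j_r}):=(1-e^{\omega X_{j_r}})/\omega$ when $\eps_s=+1$ and $g(X_{j_r}):=(e^{-\omega X_{j_r}}-1)/\omega$ when $\eps_s=-1$. Since $\ell\notin\set{j_1,\dots,j_r}$ in both parts of the lemma, the free part contributes the factor $f(X_{j_r})^{n-r-1}g(X_{j_r})$.

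For the first formula, integrating out $X_i$ and $X_j$ in each of the two orderings $(i,j)$ and $(j,i)$, and using the identities $e^{\omega u}g(u)=f(u)$ and $1-e^{-\omega u}=-\omega g(u)$, the difference simplifies to $\omega\intoi f(u)^{n-2}g(u)^2\,du$. Since $g(u)^2=f(u)^2e^{-2\omega u}$, this equals $\omega\intoi f(u)^ne^{-2\omega u}\,du$, and binomial expansion of $(1-e^{\omega u})^n$ combined with $\intoi e^{m\omega u}\,du=0$ for non-zero integer~$m$ isolates the $k=2$ term and gives $\binom n2/\omega^{n-1}$. The $t=0$ contribution vanishes by symmetry, so the inverse Fourier formula \eqref{e:ift} yields the first identity.

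The second formula is handled the same way, but the inner integration over $X_i$ and $X_j$ in the orderings $(i,j,k)$ and $(j,i,k)$ produces the less-symmetric expression $-(2X_k+f(X_k)+g(X_k))/\omega$ after subtraction. Combined with the outer factor $e^{\omega X_k}f(X_k)^{n-4}g(X_k)$ and simplified using $e^{\omega u}g(u)=f(u)$, the calculation reduces to evaluating
\[
 \intoi f(u)^{n-2}\,du,\qquad\intoi f(u)^{n-3}g(u)\,du,\qquad\intoi u\,f(u)^{n-3}\,du,
\]
which by the same binomial trick (and \eqref{e:In} for the last one) equal $1/\omega^{n-2}$, $-(n-2)/\omega^{n-2}$, and $\omega^{-(n-3)}(\tfrac12-H_{n-3}/\omega)$ respectively. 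Assembling these yields $\hat P^{(i,\ell)}_{i,j,k}(t)-\hat P^{(i,\ell)}_{j,i,k}(t)=-\omega^{-(n-2)}+(n-3+2H_{n-3})\omega^{-(n-1)}$ for $t\ne0$, and inverse Fourier transform via \eqref{e:ift} then produces the stated combination of $B_{n-1}(\alpha)$ and $B_{n-2}(\alpha)$.

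The main obstacle is the algebraic bookkeeping in the second formula: the three integrals above carry differing powers of $\omega$ with opposite signs, and the $H_{n-3}$ and $(n-3)$ coefficients have to be tracked through both the inner integration (where the $H_{n-3}$ originates via the $2X_k$ linear factor from \eqref{e:In}) and the final Fourier inversion, which introduces additional sign and factorial factors depending on the parity of~$n$.
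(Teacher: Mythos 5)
Your proof is correct and follows essentially the same Fourier-analytic approach as the paper: take the Fourier transform in $\alpha$, integrate out the inner and free variables, isolate the surviving binomial term (plus, for the second formula, the $\intoi x(1-e^{\omega x})^n\,dx$ integral), and invert via \eqref{e:ift}. Your $f,g$ bookkeeping and direct integration of both orderings $(i,j)$ and $(j,i)$ replace the paper's relabeling identity $\hat P^{(1,4)}_{2,1,3}=\hat P^{(2,4)}_{1,2,3}$, but the intermediate expressions and final Fourier coefficients $\binom n2\omega^{-(n-1)}$ and $-\omega^{-(n-2)}+(n-3+2H_{n-3})\omega^{-(n-1)}$ agree exactly with the paper's.
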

\begin{proof}
Consider the Fourier transform
\[
 \hat P^{(i,\ell)}_{j_1,\dots,j_r}(t)=\intoi
  P^{(i,\ell)}_{j_1,\dots,j_r}(\alpha)e^{2\pi it\alpha}\,d\alpha,
  \qquad t\in\Z.
\]
If we consider $\alpha$ to be a uniform random variable in $[0,1]$
independent of $X_1,\dots,X_{n-1}$, then $X_1,\dots,X_n$ are now i.i.d.\
$U(0,1)$ random variables and $\alpha$ satisfies~\eqref{e:edgedist}. Thus
\[
 \hat P^{(i,\ell)}_{j_1,\dots,j_r}(t)=\int_D
 e^{\eps_1\omega X_1+\dots+\eps_n\omega X_n}\,dX_1\dotsm dX_n,
\]
where $\omega=2\pi it$, $\eps_s=1$ if $s\ne i,\ell$ and $\eps_i=\eps_\ell=-1$,
and $D$ is the domain given by~\eqref{e:D}.
For the first statement we can by symmetry assume $(i,j,\ell)=(1,2,3)$. Then
\begin{align*}
 \hat P^{(1,3)}_{1,2}(t)&-\hat P^{(1,3)}_{2,1}(t)\\
 &=\hat P^{(1,3)}_{1,2}(t)-\hat P^{(2,3)}_{1,2}(t)\\
 &=\int_{X_1<X_2<X_3,\dots,X_n}\!\!\!
 \bigpar{e^{-\omega X_1+\omega X_2}-e^{\omega X_1-\omega X_2}}
 e^{-\omega X_3+\omega X_4+\dots+\omega X_n}\,dX_1\dotsm dX_n.
\end{align*}
For $t=0$ (i.e., $\omega=0$) this is clearly zero, so assume now that $t\ne0$.
Then
\[
 \int_{X_2}^1 e^{\eps\omega x}\,dx=\frac{1}{\eps\omega}(1-e^{\eps\omega X_2})
\]
for $\eps\in\set{-1,1}$, and
\[
\int_0^{X_2}\bigpar{e^{-\omega X_1+\omega X_2}-e^{\omega X_1-\omega X_2}}\,dX_1
 =\frac{1}{\omega}(e^{\omega X_2}+e^{-\omega X_2}-2).
\]
Hence integrating over all $X_s$, $s\ne2$ gives
\begin{align*}
 \hat P^{(1,3)}_{1,2}(t)-\hat P^{(1,3)}_{2,1}(t)
 &=\frac{1}{\omega^{n-1}}\intoi (e^{\omega x}+e^{-\omega x}-2)
 (-1+e^{-\omega x})(1-e^{\omega x})^{n-3}\,dx\\
 &=\frac{1}{\omega^{n-1}}\intoi \bigpar{e^{\omega x}-1}^2
 \bigpar{1-e^{\omega x}}^{n-2}e^{-2\omega x}\,dx\\
 &=\frac{1}{\omega^{n-1}}\intoi
 \bigpar{1-e^{\omega x}}^{n}e^{-2\omega x}\,dx\\
 &=\frac{1}{\omega^{n-1}}\binom{n}{2},
\end{align*}
where in the last line we have expanded $(1-e^{\omega x})^n$ using the
Binomial Theorem and used that
$\intoi e^{s\omega x}\,dx=0$ for $s\in\Z\setminus\set{0}$. Now take the inverse
Fourier transform using \eqref{e:ift} to give
\[
 P^{(1,3)}_{1,2}(\alpha)-P^{(1,3)}_{2,1}(\alpha)
 =\frac{(-1)^n}{(n-1)!}\binom{n}{2}B_{n-1}(\alpha)
\]
for almost all $\alpha\in[0,1]$. However, as both sides are continuous
in~$\alpha$, this holds for all $\alpha\in[0,1]$.

For the second statement we can assume without loss of generality that
$(i,j,k,\ell)=(1,2,3,4)$. Then, performing the integration over $X_1$,
$X_4,\dots,X_n$, and finally over~$X_2$, we have
\begin{align*}
 \hat P^{(1,4)}_{1,2,3}(t)&-\hat P^{(1,4)}_{2,1,3}(t)\\
 &=\hat P^{(1,4)}_{1,2,3}(t)-\hat P^{(2,4)}_{1,2,3}(t)\\
 &=\int_{X_1<X_2<X_3<X_4,\dots,X_n}\!\!\!
 (e^{-\omega X_1+\omega X_2}-e^{\omega X_1-\omega X_2})
 e^{\omega X_3}e^{-\omega X_4+\omega X_5+\dots}\,dX_1\dotsm dX_n\\
 &=\frac{1}{\omega^{n-2}}
 \int_{X_2<X_3}(e^{\omega X_2}+e^{-\omega X_2}-2)e^{\omega X_3}
 (-1+e^{-\omega X_3})(1-e^{\omega X_3})^{n-4}\,dX_2\,dX_3\\
 &=\frac{1}{\omega^{n-2}}
 \int_{X_2<X_3}(e^{\omega X_2}+e^{-\omega X_2}-2)
 (1-e^{\omega X_3})^{n-3}\,dX_2\,dX_3\\
 &=\frac{1}{\omega^{n-1}}\intoi (e^{\omega x}-e^{-\omega x}-2\omega x)
 (1-e^{\omega x})^{n-3}\,dx.
\end{align*}
Hence, using \eqref{e:In},
\begin{align*}
 \hat P^{(1,4)}_{1,2,3}(t)-\hat P^{(1,4)}_{2,1,3}(t)
 &=\frac{1}{\omega^{n-1}}\intoi (e^{\omega x}-e^{-\omega x}-2\omega x)
 (1-e^{\omega x})^{n-3}\,dx\\
 &=\frac{1}{\omega^{n-1}}\bigpar{(n-3)-\omega+2H_{n-3}}.
\end{align*}
Taking inverse Fourier transforms, again using \eqref{e:ift}, gives
\[
 P^{(1,4)}_{1,2,3}(\alpha)-P^{(1,4)}_{2,1,3}(\alpha)
 =\frac{(-1)^n}{(n-1)!}(n-3+2H_{n-3})B_{n-1}(\alpha)
 +\frac{(-1)^n}{(n-2)!}B_{n-2}(\alpha)
\]
for almost all $\alpha\in[0,1]$, and hence for all $\alpha\in[0,1]$ by
continuity.
\end{proof}

Finally, we record a well-known fact, easily shown by induction
using symmetry and $B_n'(x) =nB_{n-1}(x)$.
\begin{lemma}\label{l:Bzeros}
 The only zeros of\/ $B_n(x)$ in $[0,1]$ are $0,\frac12,1$ for odd\/ $n\ge 3$,
 and exactly two values, one in $(0,\frac12)$ and one in $(\frac12,1)$, for
 even $n\ge2$.\qed
\end{lemma}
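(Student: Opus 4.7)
The plan is to proceed by induction on $n$, using the three standard facts about Bernoulli polynomials that the paper already alludes to: the symmetry $B_n(1-x)=(-1)^nB_n(x)$, the derivative relation $B_n'(x)=nB_{n-1}(x)$, and the boundary identity $B_n(0)=B_n(1)$ for $n\ge2$, where this common value is nonzero iff $n$ is even.

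First I would dispose of the two base cases by direct computation: $B_2(x)=x^2-x+\tfrac16$ has its two roots in $(0,\tfrac12)$ and $(\tfrac12,1)$, and $B_3(x)=x(x-\tfrac12)(x-1)$ factors explicitly, giving exactly the zeros $0,\tfrac12,1$.

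For the inductive step, suppose the claim holds up to $n-1$ and consider two cases. If $n$ is odd and $n\ge5$, the symmetry $B_n(1-x)=-B_n(x)$ forces $B_n(\tfrac12)=0$, while $B_n(0)=B_n(1)=0$ because the Bernoulli number $B_n$ vanishes for odd $n\ge3$; thus $0,\tfrac12,1$ are zeros. To see there are no others, note that $B_n'=nB_{n-1}$ with $n-1$ even, and by the inductive hypothesis $B_{n-1}$ has exactly two zeros in $[0,1]$, both in the open interval $(0,1)$. By Rolle's theorem, if $B_n$ had $k\ge4$ zeros in $[0,1]$ there would be at least $k-1\ge3$ zeros of $B_n'$ in $(0,1)$, contradicting the count of two. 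If $n$ is even and $n\ge4$, the identity $B_n(0)=B_n(1)\ne0$ rules out zeros at the endpoints. The inductive hypothesis says $B_{n-1}$ (odd, $\ge3$) has zeros exactly at $0,\tfrac12,1$, so $B_n'$ has a unique zero in the open interval $(0,1)$, namely $\tfrac12$. By Rolle, $B_n$ then has at most two zeros in $(0,1)$. For the matching lower bound, observe that $B_n(0)$ and $B_n(\tfrac12)$ have opposite signs (this follows from the classical identity $B_n(\tfrac12)=(2^{1-n}-1)B_n$), so the intermediate value theorem yields a zero in $(0,\tfrac12)$, and the symmetry $B_n(1-x)=B_n(x)$ produces its mirror image in $(\tfrac12,1)$.

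The only real step requiring care is the lower bound in the even case, where one needs the sign change of $B_n$ between $0$ and $\tfrac12$; this is the part of the argument that is not purely formal from Rolle plus induction. Everything else reduces to the bookkeeping of zeros under the operations $B_n\mapsto B_n'$ and $x\mapsto 1-x$, which is why the paper calls the result well known.
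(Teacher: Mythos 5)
Your proof is correct and is precisely the induction-via-Rolle argument the paper gestures at when it says the lemma is ``easily shown by induction using symmetry and $B_n'(x)=nB_{n-1}(x)$'': the base cases $B_2,B_3$, the Rolle upper bound on the number of distinct zeros via the inductive count for $B_{n-1}$, and the endpoint/midpoint evaluations for the lower bound. The only ingredient you use beyond what the paper's one-line hint names is the identity $B_n(\tfrac12)=(2^{1-n}-1)B_n$ to get the sign change in the even case; this is fine, and one could alternatively derive the same sign change from $\intoi B_n(x)\,dx=0$ (itself a consequence of the derivative relation and $B_{n+1}(0)=B_{n+1}(1)$) together with the monotonicity of $B_n$ on $[0,\tfrac12]$ and $[\tfrac12,1]$ that your argument already establishes, keeping the proof entirely within the two facts the paper cites.
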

In particular, $B_n(x)$ and $B_{n+1}(x)$ have no common zeros in $[0,1]$.
(In fact, this extends to all complex zeros; equivalently, all zeros are
simple, see \cite{Brillhart} and \cite{Dilcher}.)


\begin{thebibliography}{99}

\bibitem{Aleks}
V.E. Alekseev,
On the entropy values of hereditary classes of graphs,
{\em Discrete Math. Appl.} {\bf 3} (1993), 191--199.

\bibitem{ABBM}
N.~Alon, J.~Balogh, B.~Bollob\'as and R.~Morris,
The structure of almost all graphs in a hereditary property,
{\em J. Combin. Theory Ser. B} {\bf 101} (2011),  85--110.

\bibitem{Al-Stav1}
N.~Alon and U.~Stav,
What is the furthest graph from a hereditary property?
{\em Random Structures \& Algorithms} {\bf 33} (2008),  87--104.

\bibitem{Al-Stav2}
N.~Alon and U.~Stav,
The maximum edit distance from hereditary graph properties,
{\em J. Combin. Theory Ser. B} {\bf 98} (2008),  672--697.

\bibitem{Al-Stav3}
N.~Alon and U.~Stav,
Stability-type results for hereditary properties,
{\em J. Graph Theory} {\bf 62} (2009),  65--83.

\bibitem{AKL}
O. Angel,  A.S. Kechris and R. Lyons,
Random orderings and unique ergodicity of automorphism groups,
{\em J. Eur. Math. Soc.} {\bf 16} (2014), 2059--2095.

\bibitem{BBM1}
J. Balogh, B. Bollob\'as and R. Morris,
Hereditary properties of ordered graphs,
in: {\em Topics Discrete Math.}, vol. {\bf 26},
Springer-Verlag, 2006, pp. 179--213 (special edition for
J. Ne\v{s}etril, M. Klazar, J. Kratochv\'{\i}l, M. Loebl,
J. Matou\v{s}ek, R. Thomas and P. Valtr, eds).

\bibitem{BBM2}
J. Balogh, B. Bollob\'as and R. Morris,
Hereditary properties of ordered graphs, ordered hypergraphs and partitions,
{\em European J. Combin.} {\bf 8} (2006), 1263--1281
(special edition, M. Krivelevich and B. Sudakov, eds).

\bibitem{BBSS1}
J. Balogh, B. Bollob\'as, M. Saks and V.T. S\'os,
On the diversity function of a hereditary graph property,
{\em J. Combin. Theory, Ser. B} {\bf 99} (2009), 9--19.

\bibitem{BBSS2}
J. Balogh, B. Bollob\'as, M. Saks and V.T. S\'os,
The unlabelled speed of a hereditary graph property,
{\em J. Combin. Theory Ser. B} {\bf 99} (2009),  9--19.

\bibitem{BBS1}
J. Balogh, B. Bollob\'as and M. Simonovits,
On the number of graphs without forbidden subgraph,
{\em J. Combin. Theory, Ser. B} {\bf 91} (2004), 1--24.

\bibitem{BBS2}
J. Balogh, B. Bollob\'as and M. Simonovits,
The typical structure of graphs without given excluded subgraphs,
{\em Random Structures \& Algorithms} {\bf 34} (2009), 305--318.

\bibitem{BBW1}
J. Balogh, B. Bollob\'as and D. Weinreich,
The speed of hereditary properties of graphs,
{\em J. Combin. Theory, Ser. B} {\bf 79} (2000), 131--156.

\bibitem{BBW2}
J. Balogh, B. Bollob\'as and D. Weinreich,
A jump to the Bell number for hereditary graph properties,
{\em J. Combin. Theory, Ser. B} {\bf 95} (2005), 29--48.

\bibitem{BoTho1}
B.~Bollob\'as and A.~Thomason,
Projections of bodies and hereditary properties of hypergraphs,
{\em Bull. London Math. Soc.} {\bf 27} (1995), 417--424.

\bibitem{BoTho2}
B.~Bollob\'as and A.~Thomason,
Hereditary and monotone properties of graphs,
in {\em The Mathematics of Paul Erd\H{o}s, II,}  pp.~70--78,
Algorithms Combin., {\bf 14}, Springer, Berlin, 1997.

\bibitem{BoTho3}
B.~Bollob\'as and A.~Thomason,
The structure of hereditary properties and colourings of random graphs,
{\em Combinatorica} {\bf 20} (2000), 173--202.

\bibitem{BTW}
J.~B\"ottcher, A.~Taraz and A.~W\"urfl,
Perfect graphs of fixed density: counting and homogeneous sets,
{\em Combin. Probab. Comput.} {\bf 21} (2012), 661--682.

\bibitem{Brillhart}
J. Brillhart,
On the Euler and Bernoulli polynomials,
{\em J. Reine Angew. Math.} {\bf 234} (1969), 45--64.

\bibitem{Dilcher}
K. Dilcher,
On multiple zeros of Bernoulli polynomials,
{\em Acta Arith.} {\bf 134} (2008), 149--155.

\bibitem{Dot-Nag}
R.~Dotson and B.~Nagle,
Hereditary properties of hypergraphs,
{\em J. Combin. Theory Ser. B} {\bf 99} (2009), 460--473.

\bibitem[Kallenberg(2005)]{Kallenberg:symm}
O. Kallenberg,
{\em Probabilistic Symmetries and Invariance Principles},
Springer, New York, 2005.

\bibitem{Lovasz}
L. Lov\'asz,
{\em Large Networks and Graph Limits},
American Mathematical Society, Providence, RI, 2012.

\bibitem{LSz:regularity}
L. Lov\'asz and B. Szegedy,
Regularity partitions and the topology of graphons,
{\em An Irregular Mind, Szemer\'edi is 70},
J. Bolyai Math. Soc. and Springer-Verlag, 2010,
pp. 415--446.

\bibitem{Mar-Tho}
E.~Marchant and A.~Thomason,
The structure of hereditary properties and 2-coloured multigraphs,
{\em Combinatorica} {\bf 31} (2011), 85--93.

\bibitem{NIST}
\emph{NIST Digital Library of Mathematical Functions},
\url{http://dlmf.nist.gov/}

\bibitem{Pr-Ste1}
H.J.~Pr\"omel and A.~Steger,
Excluding induced subgraphs: Quadrilaterals,
{\em Random Structures \& Algorithms} {\bf 2} (1991), 55--71.

\bibitem{Pr-Ste2}
H.J.~Pr\"omel and A.~Steger,
The asymptotic number of graphs not containing a fixed color-critical subgraph,
{\em Combinatorica} {\bf 12} (1992), 463--473.

\bibitem{Pr-Ste3}
H.J.~Pr\"omel and A.~Steger,
Excluding induced subgraphs III. A general asymptotic,
{\em Random Structures \& Algorithms} {\bf 3} (1992), 19--31.

\bibitem{Saxt}
D.~Saxton,
Supersaturation for hereditary properties,
{\em European J. Combin.} {\bf 33} (2012), 458--462.

\end{thebibliography}
\end{document}